\pgfplotsset{compat=newest}
\definecolor{mygreen}{HTML}{43a047} 
\newcommand{\Vanja}[1]{\textcolor{blue}{#1}}
\newcommand{\vanja}[1]{\footnote{\textcolor{blue}{#1}}}
\def\ulal{\underline{\alpha}}
\def\olal{\overline{\alpha}}
\def\olbe{\overline{\beta}}
\def\opsi{\overline{\psi}}
\newcommand{\Om}{\Omega}
\newcommand{\D}{\Delta}
\newcommand{\tildePv}[1]{\tilde{\boldsymbol P}_h #1}
\newcommand{\tildePp}[1]{\tilde{P}_h  #1}
\renewcommand{\div}{\nabla \cdot}%{\textup{div}\, }
\newcommand{\Ih}{\boldsymbol{I}_h}
\newcommand{\bv}{\boldsymbol{v}}
\newcommand{\bw}{\boldsymbol{w}}
\newcommand{\bwh}{\boldsymbol{w}_h}
\newcommand{\bvh}{\boldsymbol{v}_h}
\newcommand{\obvh}{\overline{\boldsymbol{v}}_h}
\newcommand{\obvht}{\overline{\boldsymbol{v}}_{ht}}
\newcommand{\bvt}{\boldsymbol{v}_t}
\newcommand{\bvht}{\boldsymbol{v}_{ht}}
\newcommand{\ph}{\psi_h}
\newcommand{\pt}{\psi_t}
\newcommand{\pht}{\psi_{ht}}
\newcommand{\pth}{\psi_{ht}}
\newcommand{\ptt}{\psi_{tt}}
\newcommand{\phtt}{\psi_{htt}}
\newcommand{\ptth}{\psi_{htt}}
\newcommand{\ddt}{\frac{\textup{d}}{\textup{d}t}}
\newcommand{\dt}{\, \textup{d} t}
\newcommand{\ds}{\, \textup{d} s }
\newcommand{\intT}{\int_0^t}
\newcommand{\nLtwo}[1]{\|#1\|_{L^2}}
\newcommand{\nLinf}[1]{\|#1\|_{L^\infty}}
\newcommand{\nLoneLtwot}[1]{\|#1\|_{L^1 (0,t;L^2)}}
\newcommand{\nLtwoLtwot}[1]{\|#1\|_{L^2 (0,t;L^2)}}
\newcommand{\nLtwoLinf}[1]{\|#1\|_{L^2 (L^\infty)}}
\newcommand{\nLinfLinf}[1]{\|#1\|_{L^\infty (L^\infty)}}
\newcommand{\prodLtwo}[2]{(#1, #2)_{L^2}}
\newcommand{\R}{\mathbb{R}} 
\newcommand{\Hdiv}{\boldsymbol{H}(\textup{div}; \Omega)}
\newcommand{\spaceV}{\boldsymbol{V}_{h}}
\newcommand{\spaceS}{\Psi_{h}}
\newcommand{\Ltwo}{L^2(\Omega)}
\newtheorem{theorem}{Theorem}
\newtheorem{lemma}{Lemma}
\newtheorem{proposition}{Proposition}
\newtheorem{remark}{Remark}
\numberwithin{lemma}{section}
\numberwithin{proposition}{section}
\numberwithin{theorem}{section}
\numberwithin{equation}{section}
\numberwithin{corollary}{section}
\newcommand{\leqnomode}{\tagsleft@true}
\newcommand{\reqnomode}{\tagsleft@false}
\definecolor{RUred}{rgb}{0.8902,0.4471,0.1333}
\definecolor{tumblau}{rgb}{0,0.,1.}
\definecolor{tumgruen}{rgb}{0.,0.5,0.}
\definecolor{tumelfenbein}{rgb}{0.8549,0.8431,0.7961}
\definecolor{magentao}{rgb}{0.56,0.0,0.56}
\definecolor{cyano}{rgb}{0.0,0.56,0.56}
\definecolor{red}{rgb}{0.8902,0.3471,0.1333}
\definecolor{redd}{rgb}{0.97502,0.1071,0.05333}	
\definecolor{blued}{rgb}{0,0.2,0.349019608}
\definecolor{blue}{rgb}{0.301961,0.443137,0.5490196}
\definecolor{bluel}{rgb}{0.454902,0.564706,0.6570588}
\definecolor{tumdarkblue}{rgb}{0,0.2,0.349019608}
\definecolor{tumdarkbluel}{rgb}{0.152941,0.321569,0.45098} % 85%
\definecolor{tumdarkbluen}{rgb}{0.301961,0.443137,0.5490196} % 70%
\definecolor{tumdarkbluem}{rgb}{0.454902,0.564706,0.6570588} % 55%
\definecolor{tumblaus}{rgb}{0.603922, 0.760784, 0.901961} % TumBlau in 40% abgetnt
\definecolor{tumblaum}{rgb}{0.403922, 0.639216, 0.847059} % TumBlau in 60% abgetnt
\definecolor{light-gray}{gray}{0.02}
\definecolor{notlight-gray}{gray}{0.35}
\definecolor{myblue}{rgb}{93, 188, 210 }
\definecolor{RUred}{rgb}{0.745,0.192,0.102}
\definecolor{RUblack}{rgb}{0,0,0}
\definecolor{RUwhite}{rgb}{0.98,0.98,0.98}
\definecolor{amber}{rgb}{1.0, 0.75, 0.0}
\definecolor{grey}{rgb}{0.5,0.5,0.5}
\newenvironment{assumptionp}[1]{
	
	\assumptionalt
}{\endassumptionalt}
\newcommand{\BK}{\mathcal{B}_{\textup{K}}}
\title[Mixed approximation of nonlinear acoustic equations]{Mixed approximation of nonlinear acoustic equations:
	Well-posedness and \emph{a priori} error analysis}
\subjclass[2010]{35L05, 35L72, 65M12, 65M15, 65M60}
\keywords{nonlinear acoustic waves, mixed finite elements, Kuznetsov's equation, Westervelt's equation}
\author[M. Meliani and V. Nikoli\'{c}]{\small Mostafa Meliani and Vanja Nikoli\'{c}}
\address{ 
	Department of Mathematics \\ 
	Radboud University   \\ 
	Heyendaalseweg 135,
	6525 AJ Nijmegen, The Netherlands}
\email{mostafa.meliani@ru.nl} 
\email{vanja.nikolic@ru.nl}
\begin{document}
\vspace*{8mm}
\begin{abstract}
	Accurate simulation of nonlinear acoustic waves is essential for the continued development of a wide range of (high-intensity) focused ultrasound applications. This article explores mixed finite element formulations of classical strongly damped quasilinear models of ultrasonic wave propagation; the Kuznetsov and Westervelt equations. Such formulations allow simultaneous retrieval of the \emph{acoustic particle velocity} and either the \emph{pressure} or \emph{acoustic velocity potential}, thus characterizing the entire ultrasonic field at once. Using non-standard energy analysis and a fixed-point technique, we establish sufficient conditions for the well-posedness, stability, and optimal \textit{a priori} errors in the energy norm for the semi-discrete equations. For the Westervelt equation, we also determine the conditions under which the error bounds can be made uniform with respect to the involved strong dissipation parameter. A byproduct of this analysis is the convergence rate for the inviscid (undamped) Westervelt equation in mixed form. Additionally, we discuss convergence in the $L^q(\Omega)$ norm for the involved scalar quantities, where $q$ depends on the spatial dimension. Finally, computer experiments for the Raviart--Thomas (RT) and Brezzi--Douglas--Marini (BDM) elements are performed to confirm the theoretical findings.
\end{abstract}
\vspace*{-7mm}
\maketitle           

\section{Introduction}
\indent In various ultrasound applications, characterizing the whole acoustic field accurately (that is, the scalar  pressure or potential fields as well as the velocity) is of particular importance as it allows precise computing of different quantities of interest, such as the acoustic intensity. With this motivation in mind, the purpose of this article is to investigate mixed-finite-element approximations for a family of strongly damped acoustic wave models that describe ultrasound propagation, the most general of which is the Kuznetsov equation.

\indent The Kuznetsov equation~\cite{kuznetsov1971equations} is a popular model of nonlinear sound propagation through fluids that accounts for thermoviscous dissipation and general nonlinearities of quadratic type. In the mathematical literature, it is commonly stated as
\begin{equation}\label{eq:continuous_problem}
	\left. \begin{aligned}
		&(1+2k \psi_t)\psi_{tt}-c^2 \D \psi-b \D \psi_t+2\sigma \nabla \psi \cdot \nabla \psi_t=0,
	\end{aligned}\right.
\end{equation}
where $\psi$ is the acoustic velocity potential. The constant $c$ denotes the speed of sound and $b$ the sound diffusivity.  The constant $k \in \R$ is a function of the coefficient of nonlinearity of the medium $\beta_a$ and the propagation speed. Although $\sigma = 1$ for the Kuznetsov equation, we generalize it to $\sigma \in \R$ in this work, so that a particular choice of $(k, \sigma)$ will allow us to retrieve the Westervelt equation~\cite{westervelt1963parametric} ($\sigma=0$) and the linear strongly damped equation ($k=\sigma=0$) as well. We refer the reader to the books~\cite{kaltenbacher2007numerical, hamilton1998nonlinear} for more physical background on the Kuznetsov equation and other models of nonlinear acoustics. \\ 
\indent Since $\bv=\nabla \psi$ is the acoustic particle velocity, we can rewrite \eqref{eq:continuous_problem} in a potential-velocity form, adding a general source term $f$, and couple it with homogeneous Dirichlet boundary conditions as well as initial conditions, to arrive at
\begin{equation}\label{eq:continuous_mixed_problem}
	\left \{ \begin{aligned}
		&(1+2k \psi_t)\psi_{tt}-c^2 \nabla \cdot \bv -b \, \nabla \cdot \bvt+2\sigma  \boldsymbol{v} \cdot \boldsymbol{v}_t=f \ \text{in } \Omega \times (0,T), \\[2mm]
		& \boldsymbol{v}= \nabla \psi,\\[2mm]
		& \psi|_{\partial\Omega} = 0 , \quad \psi(0)=\psi_0,\quad \pt(0)=\psi_1.
	\end{aligned}\right.
\end{equation} 
Approximating $(\psi(t), \bv(t))$ amounts to solving \eqref{eq:continuous_mixed_problem} in a finite-dimensional subspace $\spaceS\times \spaceV$ of $ \Ltwo\times \Hdiv$ for $t \in [0,T]$, where $h>0$ is a spatial discretization parameter. We aim to provide convergence rates for some of the most popular approximation spaces for $\Hdiv$; see e.g., \cite{raviart1977mixed,nedelec1980mixed,nedelec1986new,brezzi1985two,brezzi1987efficient,brezzi1987mixed,chen1989prismatic}. %As in ultrasound applications it might also be of interest to directly compute the acoustic pressure instead of the potential, we intend to compare this formulation to the pressure-velocity form of the Kuznetsov equation. The acoustic pressure relates to the potential as $p=-\varrho \psi_t$, given the medium density $\varrho$, and so this formulation will be first-order in time.

In mixed finite element formulations, the \textit{acoustic particle velocity} and either \textit{acoustic velocity potential} or \emph{acoustic pressure} (corresponding in elastodynamics to \textit{stress} and \textit{displacement}/\textit{velocity}, respectively) are approximated at the same time, resulting in a higher order of accuracy in the approximation of velocity~\cite{cowsat1990priori}. This property is particularly useful
for applications where the gradient of the acoustic field is of importance, such as enforcing absorbing conditions~\cite{shevchenko2015absorbing} or gradient-based shape optimization of focused ultrasound devices~\cite{kaltenbacher2016shape,meliani2021shape}.

Moreover, the mixed finite element approach allows us to characterize the whole acoustic field at once as well as compute acoustic intensity accurately. This is relevant for, among others, ultrasound-induced heating of biological tissue as a result of acoustic absorption. In such models, the acoustic energy flux (acoustic intensity) acts as a source term for the temperature equation; see, e.g.,~\cite{shevchenko2012multi}.
% Moreover, the mixed formulation often results in a simple matrix structure particularly suitable for efficient computation \cite{jenkins2007numerical,bahriawati2005three}.

\begin{comment}
	Additionally, in the classical finite element method, the loss of accuracy in the approximation of $\nabla \psi$ and $\nabla \psi_t$ makes it tricky to obtain an optimal rate of convergence on $\psi$.
	The higher accuracy in the approximation enabled by the mixed finite elements leads to a better approximation of the nonlinearity term $\nabla \psi \cdot \nabla \psi_t$. This allows us to retrieve the optimal convergence rates known for linear wave mixed-finite-element problems
	\cite{pani2001mixed}. 
\end{comment}
\indent The error estimates of spatial approximations using continuous and discontinuous Galerkin elements are available for the damped Westervelt equation ($b>0$, $\sigma=0$); see~\cite{nikolic2019priori, antonietti2020high}.  A discontinuous Galerkin coupling for nonlinear elasto-acoustics based on the damped Kuznetsov equation ($b>0$, $\sigma \neq0$) has been analyzed in \cite{muhr2021discontinuous}. However, to the best of our knowledge, this is the first work on the rigorous analysis of mixed formulations for classical models of nonlinear acoustics. We note that mixed finite element methods have been extensively studied in the context of linear wave equations; see~\cite{peralta2022mixed,egger2020mass,geveci1988application,cowsat1990priori,cowsar1996priori,jenkins2002priori,kirby2015symplectic, makridakis1992mixed} and the references contained therein. For the \emph{a priori} analysis of a strongly damped linear wave equation in mixed form, we refer to \cite{pani2001mixed}. Some work exists also on convergence of the approximations of nonlinear wave equations; see~\cite{chen2001improved}.
However, due to specific nonlinear terms $(1+2k\psi_t)\psi_{tt}$ and $\nabla \psi \cdot \nabla \psi_t$ present in the Kuznetsov equation, our analysis differs from the aforementioned references and is of particular relevance for simulation of high-intensity focused ultrasound. The low regularity of the mixed FEM space ($\psi$ approximated in $\Ltwo \not\subset L^\infty(\Omega)$) will require the use of inverse estimates to ensure the positiveness of $(1+2k\psi_{ht})$ and thus the non-degeneracy of the semi-discrete model, in combination with suitable smallness assumptions on the exact solution and the discretization parameter.\\
\indent The damping parameter $b$ is relatively small in practice and can become negligible in certain media; see~\cite[Ch.\ 5]{kaltenbacher2007numerical}. Motivated by this, we additionally provide a uniform-in-$b$ error analysis of the mixed approximation of the Westervelt equation $(\sigma=0)$. In particular, we establish sufficient conditions under which  the hidden constant in the derived bounds does not degenerate as $b \rightarrow 0^+$.  Nonlinear acoustic equations are notoriously harder to treat in such a uniform manner. In the continuous setting, they require the use of energy functionals of higher order compared to the non-uniform analysis; see~\cite{kaltenbacher2022parabolic,kaltenbacher2009global}. In a finite-dimensional setting, one cannot expect to have access to such energies due to, in general, low global spatial regularity of the numerical solution. Instead, we will exploit the time-differentiated semi-discrete version of the first equation in \eqref{eq:continuous_mixed_problem}. Concerning related results, to our knowledge, this is the first result dealing with the uniform mixed approximation of the Westervelt equation. The finite element estimates for the Westervelt equation in case $b=0$  and in standard (non-mixed) form follow as a particular case of the results in~\cite{hochbruck2021error,maier2020error}. \\
%Additionally, we generalize in this work some of the results established by \cite{johnson1981error,pani2001mixed} in $\R^2$ to $\R^3$.
\indent Our theoretical approach relies on non-standard energy estimates for a linearized and non-degenerate problem presented in Section~\ref{sec:lin}, followed by a fixed-point argument in Section~\ref{sec:nonlin_kuz}, 
where we show that the previously established estimates hold provided a combination of smallness of discretization step and of exact solution norm is satisfied. 
In Section~\ref{sec:inv_West}, we present the stability and error results relating to the inviscid Westervelt equation. We point out that one of the crucial elements of the analysis is the choice of the approximate initial data; in particular, they should be chosen as mixed projections of the exact data; see \eqref{eq:init_propL} below for details. The main theoretical results are contained in Theorems~\ref{Thm:Kuzn} and~\ref{Thm:Wes}. In Section~\ref{sec:numexp}, we provide the readers with numerical examples to illustrate some of the established theoretical bounds. In Appendix~\ref{SubSec:PressureVelocity}, we discuss how the developed theoretical framework extends to the pressure-velocity formulations of these models.

\section{Theoretical preliminaries}
In this section, we introduce the necessary theoretical tools to be used in the numerical analysis. We conduct the analysis of mixed-formulation \eqref{eq:continuous_problem} under the following assumptions on the constant medium parameters:
\[c>0, \quad b>0, \quad k, \sigma \in \R .\]

The presence of the sound diffusivity ($b>0$) and thus the strong damping $-b \nabla \cdot \bvt$ contributes to the parabolic-like character of the Kuznetsov equation (see, e.g.,~\cite{kaltenbacher2011well,mizohata1993global} for its analysis), and is essential for the validity of the estimates in Sections~\ref{sec:lin} and~\ref{sec:nonlin_kuz} (see condition \eqref{eq:b_ref} below). However, in Section~\ref{sec:inv_West}, we establish sufficient conditions under which the convergence analysis can be made uniform in $b$ for a simplified setting where $\sigma=0$. %\vanja{This makes it sound like we can keep the same assumptions/end result and relax the condition on $b$. We can instead say something like ``However, in Section~\ref{sec:inv_West} in a simplified setting where $\sigma=0$, we establish sufficient conditions under which the convergence analysis can be made uniform in $b$." }

\color{black}

\subsection{Discretization spaces} 
We assume that $\Om \subset \R^d$ is a polygonal convex domain, where $d \in \{2, 3\}$, so that it can be discretized exactly.
Let $\mathcal{T}_h$ be a regular family of partitions of $\Om$, which satisfy the quasi-uniformity condition. 
Using partition-compatible spaces of approximation (see e.g, \cite[Ch. 2]{boffi2013mixed}),
we construct a collection of finite dimensional subspaces $\spaceS\times \spaceV$ of $\Ltwo \times \Hdiv $ of order $p \geq 1$ ($M_p$ in the notation of \cite{boffi2013mixed}) satisfying \[\div(\spaceV) \subseteq \spaceS.\]
To $\spaceS\times \spaceV$, we associate a positive parameter $h$ representing the discretization parameter of $\mathcal{T}_h$.
Note that due to the inclusion condition above, the space $\spaceS$ contains non-smooth functions. Therefore, one should be careful when choosing the interpolation operator on this space. We will make this choice explicit later in the analysis.

For a final time $T>0$ and $f \in L^2(0,T; L^2(\Omega))$, the approximate problem studied in this work is then given by
\begin{subequations} \label{IBVP_approx_Kuznetsov}
	\begin{equation} \label{eq_approx_Kuznetsov}
		\left \{\begin{aligned}
			& \begin{multlined}[t]((1+2k \psi_{ht})\psi_{htt}, \phi_h)_{L^2}-c^2 (\nabla \cdot (\bvh+\tfrac{b}{c^2}\bvht), \phi_h)_{L^2}\\+2 \sigma(\boldsymbol{v}_h \cdot \bvht, \phi_h)_{L^2}= (f, \phi_h)_{L^2}, \end{multlined}\\[2mm]
			& (\bvh, \boldsymbol{w}_h)_{L^2}+(\psi_h, \nabla \cdot \boldsymbol{w}_h)_{L^2}=0,
		\end{aligned} \right.
	\end{equation} 
	for all $(\phi_h, \bwh) \in \spaceS \times \spaceV$ a.e.\ in $(0,T)$, supplemented with approximate initial conditions 
	\begin{equation}\label{approx_initcond}
		(\ph, \pht)\vert_{t=0}=(\psi_{0h}, \psi_{1h}) \in \spaceS \times \spaceS,
	\end{equation}
\end{subequations}
which will be set through the mixed projection of exact conditions; see \eqref{eq:mixed_proj} and \eqref{eq:init_propL} below for details.  %which will be made precise in the analysis.

\begin{comment}
	\begin{remark}
		Well-posedness results for quasilinear wave equations often rely on smooth domains and elliptic regularity in their analyses, e.g., \cite{mizohata1993global, kawashima1992global}. A polygonal domain could therefore be viewed as a restrictive assumption. However, as in the finite element approach, we expect that the results established in this paper can be extended to general smooth domains $\Omega$ using nonconforming space discretizations via, e.g., isoparametric elements~\cite{maier2020error,elliott2013finite}, based on, among others, the definitions of conforming $\Hdiv$ spaces on curved domains \cite{bernardi1989optimal,dubois1990discrete}.
	\end{remark}
\end{comment}
\subsection{Properties of the interpolation and projection operators} In the upcoming analysis, we will rely on the properties of the interpolation and projection operators, which we recall here.
Let $\Ih$ be the interpolation operator in $\spaceV$ (as defined in \cite[Section 2.5]{boffi2013mixed}) and let $\pi_h$ be the $L^2$-projection operator on $\div(\spaceV) \subseteq \spaceS$. 
For the upcoming theory we need the following properties to hold:%\mos{I initially took these approximation properties from \cite{chen2005finite} where they are stated with full norms on the rhs. In \cite{boffi2013mixed} they have semi norms for BDM, BDFM, and RT (this ref does not contain the other elements). Perhaps other estimates below have to be revised. \Vanja{The question is whether we would be able to keep the seminorms in the nonlinear proof in the final estimates (for instance, we definitely need $L^\infty$ regularity and if we would use an embedding this would be a full norm). If not, it does not matter so much} }
\begin{align} 
	&\div(\Ih \bv) = \pi_h \div (\bv),
	\shortintertext{and there exists a generic constant $C$ independent of $h$ such that}
	&\nLtwo{\bv - \Ih \bv}\leq C h^m \|\bv\|_{H^m},\\
	&\nLtwo{\div(\bv - \Ih \bv)}\leq C h^s \|\div \bv\|_{H^s},
\end{align}
where $1 \leq m \leq p+1$ and $0\leq s \leq p^*$. These hold for 
\color{black} 
$p^* = p+1$ in the case of Raviart--Thomas (RT), and Brezzi--Douglas--Fortin--Marini (BDFM) elements,
 and for $p^* = p$ in the case of Brezzi--Douglas--Marini (BDM) and Brezzi--Douglas--Dur\'an--Fortin (BDDF) elements; see e.g, \cite{boffi2013mixed,chen2005finite} for the construction and properties of such elements.

Following~\cite{johnson1981error, pani2001mixed}, we introduce the mixed projections $(\tildePp{\psi}, \tildePv{\bv}) \in \spaceS\times\spaceV$ of $( \psi,\bv) \in  \Ltwo \times \Hdiv$ as follows:
\begin{equation}\label{eq:mixed_proj}
	\begin{aligned}
		(\nabla \cdot(\bv-\tildePv{\bv}), \phi_h)_{L^2}=&\,0,\quad  &&\forall \phi_h \in \spaceS,\\
		(\bv-\tildePv{\bv}, \boldsymbol{w}_h)+(\psi-\tildePp{\psi}, \nabla \cdot \boldsymbol{w}_h)_{L^2}=&\,0, \quad &&\forall \bwh \in \spaceV.
	\end{aligned}
\end{equation}
We next state a useful lemma for the  analysis  as it deals with the recurring second equation of the mixed problems (including the mixed projection). 
\begin{comment}
\Vanja{Compared to \cite{johnson1981error}, the results are valid for an arbitrary $d$ (particularly the case $d=3$). This extension is possible because the approach in \cite{johnson1981error} does not rely on Sobolev embeddings, which would hinder the generalization to $d=3$ (this is, for example, the case for the approach of \cite{duran1988error})}. Note that $L^\infty$ bounds for a mixed elliptic problem were obtained in \cite{gastaldi1989sharp} for $d=3$. \vanja{Aren't the results of  \cite{gastaldi1989sharp} valid for $d \geq 3$? We need to be more concrete with this reference. Say if they obtained the third bound in (2.3) or its better version and why we have to prove that part of Lemma 2.2}
\end{comment}
\begin{lemma}\label{lem:elliptic_bounds}
	Let $ 1 \leq q \leq \infty$. There exist constants $C_q>0$ and $C>0$, independent of $h$, such that if $\phi_h \in \spaceS$ and $\bw \in L^2(\Omega)^d$  
	satisfy 
	\[(\bw, \bwh) + (\phi_h, \div \bwh) = 0, \qquad \forall \bwh \in \spaceV, \]
	then %\vanja{The proof in~\cite{johnson1981error} uses $\|\Pi_h \nabla \psi\|_{L^2} \leq \|\nabla \psi\|_{W^{1,q}}$ on p.\ 46. So for (their) $q \rightarrow 1^+$, we would need $\|\Pi_h \chi\|_{L^2} \leq \|\chi\|_{W^{1,1}}$ to hold in 3D. Perhaps this is problematic and one needs more regularity of $\chi$? \cite[Lemma 1.1]{johnson1981error} gives this bound in 2D \Mos{ You are right! I worked out the range of q (and p) for which it is valid} \Vanja{This is great. We should just not divide by zero. We should write this with distinguishing $d\leq 2$ and $d=3$. Also in Lemma 2.2 }}
	\begin{equation}\|\phi_h\|_{L^q} \leq C_q \nLtwo{\bw} \quad \textrm{for }\ \left\{
		\begin{aligned}
		&{1 \leq q \leq 6 \ \ \, \textrm{if } \quad d = 3},\\
		&{1 \leq q < \infty \ \textrm{if } \quad d = 2}. %< \infty
		\end{aligned}
	\right.
	\end{equation}
	For $d=2$, we additionally have 
	\[\|\phi_h\|_{L^\infty} \leq C \log\frac1{h}\nLtwo{\bw}.\]
\end{lemma}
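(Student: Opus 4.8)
The plan is a duality argument against the continuous Dirichlet--Laplace problem, with one essential modification: the dual datum is first projected onto $\spaceS$, so that the (at best) $H^2$-regularity of the dual solution does not spoil the $h$-uniformity of the bound. Throughout I use that, for the partition-compatible spaces under consideration, $\div(\spaceV)=\spaceS$, so that the $L^2$-projection $\pi_h$ restricts to the identity on $\spaceS$.

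\emph{Setup.} Fix $q$ in the stated range and set $q'=q/(q-1)$. Since $\phi_h\in\spaceS\subset L^q(\Om)$, one has $\|\phi_h\|_{L^q}=\sup\{(\phi_h,g): g\in L^{q'}(\Om),\ \|g\|_{L^{q'}}\le 1\}$. Given such a $g$, put $g_h:=\pi_h g\in\spaceS$; then $(\phi_h,g)=(\phi_h,g_h)$ (as $\phi_h\in\spaceS$), and $\|g_h\|_{L^{q'}}\le C\|g\|_{L^{q'}}$ by the $L^{q'}$-stability of $\pi_h$ on quasi-uniform meshes. Let $\Phi_h\in H^1_0(\Om)$ solve $-\Delta\Phi_h=g_h$ and set $\boldsymbol{\Phi}_h:=\nabla\Phi_h$; by convexity of $\Om$ the Dirichlet Laplacian has full $H^2$-regularity, $\|\Phi_h\|_{H^2}\le C\|g_h\|_{L^2}$, so $\boldsymbol{\Phi}_h\in H^1(\Om)^d\subset\Hdiv$, $\div\boldsymbol{\Phi}_h=-g_h$, and both $\tildePv{\boldsymbol{\Phi}_h}$ and $\Ih\boldsymbol{\Phi}_h$ are well defined.

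\emph{The identity.} Using $\div\boldsymbol{\Phi}_h=-g_h$, then the first relation of \eqref{eq:mixed_proj} with test function $\phi_h\in\spaceS$, and finally the hypothesis $(\bw,\bwh)+(\phi_h,\div\bwh)=0$ tested with $\bwh=\tildePv{\boldsymbol{\Phi}_h}\in\spaceV$, I get
\[
(\phi_h,g)=(\phi_h,g_h)=-(\phi_h,\div\boldsymbol{\Phi}_h)=-(\phi_h,\div\tildePv{\boldsymbol{\Phi}_h})=(\bw,\tildePv{\boldsymbol{\Phi}_h}),
\]
hence $|(\phi_h,g)|\le\nLtwo{\bw}\,\nLtwo{\tildePv{\boldsymbol{\Phi}_h}}$. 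It thus remains to bound $\nLtwo{\tildePv{\boldsymbol{\Phi}_h}}$ by $C_q\|g\|_{L^{q'}}$ uniformly in $h$. Write $\nLtwo{\tildePv{\boldsymbol{\Phi}_h}}\le\nLtwo{\boldsymbol{\Phi}_h}+\nLtwo{\boldsymbol{\Phi}_h-\tildePv{\boldsymbol{\Phi}_h}}$. For the projection error, test the second relation of \eqref{eq:mixed_proj} with $\tildePv{\boldsymbol{\Phi}_h}-\Ih\boldsymbol{\Phi}_h$; its second summand vanishes since $\div(\tildePv{\boldsymbol{\Phi}_h}-\Ih\boldsymbol{\Phi}_h)=\pi_h\div\boldsymbol{\Phi}_h-\pi_h\div\boldsymbol{\Phi}_h=0$, using the first relation of \eqref{eq:mixed_proj} and the commuting property $\div(\Ih\bv)=\pi_h\div\bv$. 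Hence $(\boldsymbol{\Phi}_h-\tildePv{\boldsymbol{\Phi}_h},\tildePv{\boldsymbol{\Phi}_h}-\Ih\boldsymbol{\Phi}_h)=0$, so $\boldsymbol{\Phi}_h-\Ih\boldsymbol{\Phi}_h$ splits $L^2$-orthogonally and $\nLtwo{\boldsymbol{\Phi}_h-\tildePv{\boldsymbol{\Phi}_h}}\le\nLtwo{\boldsymbol{\Phi}_h-\Ih\boldsymbol{\Phi}_h}\le Ch\,\|\boldsymbol{\Phi}_h\|_{H^1}\le Ch\,\|\Phi_h\|_{H^2}\le Ch\,\nLtwo{g_h}$. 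For the first term, testing $-\Delta\Phi_h=g_h$ with $\Phi_h$ and using the Sobolev/Poincar\'e embedding $H^1_0(\Om)\hookrightarrow L^q(\Om)$ (valid precisely for the ranges of $q$ in the statement) gives $\nLtwo{\boldsymbol{\Phi}_h}^2=(g_h,\Phi_h)\le\|g_h\|_{L^{q'}}\|\Phi_h\|_{L^q}\le C_q\|g_h\|_{L^{q'}}\nLtwo{\boldsymbol{\Phi}_h}$. Finally, $g_h$ being a finite element function on a quasi-uniform mesh, the inverse estimate yields $\nLtwo{g_h}\le Ch^{-d(1/q'-1/2)}\|g_h\|_{L^{q'}}$, so $h\,\nLtwo{g_h}\le Ch^{\,1-d(1/q'-1/2)}\|g_h\|_{L^{q'}}\le C\|g_h\|_{L^{q'}}$, the last step holding exactly because $1-d(1/q'-1/2)\ge 0$ is equivalent to $q\le 2d/(d-2)$ — the same threshold as the Sobolev embedding. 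Collecting the estimates and taking the supremum over $g$ gives $\|\phi_h\|_{L^q}\le C_q\nLtwo{\bw}$.

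\emph{The $L^\infty$ bound for $d=2$, and the main obstacle.} For this one keeps track of the $q$-dependence: the two-dimensional Sobolev/Gagliardo--Nirenberg constant obeys $C_q\le C\sqrt{q}$, so $\|\phi_h\|_{L^q}\le C\sqrt{q}\,\nLtwo{\bw}$ for all finite $q\ge 2$; combining this with the inverse estimate $\|\phi_h\|_{L^\infty}\le Ch^{-2/q}\|\phi_h\|_{L^q}$ and choosing $q=\log(1/h)$ (so that $h^{-2/q}=e^2$) gives $\|\phi_h\|_{L^\infty}\le C(\log(1/h))^{1/2}\nLtwo{\bw}\le C\log(1/h)\nLtwo{\bw}$ for $h$ small. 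The principal obstacle is exactly the $h$-uniformity in the previous step: a naive duality argument inevitably produces the term $h\,\nLtwo{g_h}$ coming from the best-available $H^2$-regularity of the dual solution, and the crucial realization is that projecting the dual datum onto $\spaceS$ beforehand makes this term absorbable by an inverse inequality up to — and exactly up to — the critical Sobolev exponent. A secondary difficulty is the careful bookkeeping of the $q$-dependence of the constant required to push the estimate to $L^\infty$ in two dimensions.
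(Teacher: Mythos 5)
Your proof is correct, and for the $L^q$ part it takes a genuinely different route from the paper's. The paper follows Johnson--Thom\'ee \cite{johnson1981error} directly: in the duality argument the dual gradient is handled via the canonical interpolant, using the stability bound $\|\Ih \bv\|_{L^2}\leq C\|\bv\|_{W^{1,s}}$ together with $W^{2,s}$ elliptic regularity with $s=q'$; the threshold $s\geq \tfrac{2d}{d+2}$ (equivalently $q\leq \tfrac{2d}{d-2}$) then comes from the embedding $W^{1,s}\hookrightarrow L^2$ needed for that stability estimate. You avoid $L^{s}$ elliptic theory and interpolant stability below $L^2$ altogether, using only $H^2$ regularity on the convex domain, the energy estimate with the Sobolev embedding $H^1_0\hookrightarrow L^q$, and an inverse inequality to absorb the $h\,\nLtwo{g_h}$ term created by pre-projecting the dual datum; it is a pleasant coincidence-by-design that the inverse-estimate exponent becomes nonnegative exactly at the Sobolev threshold, so the two approaches yield the same range of $q$. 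Your argument is more elementary in its ingredients but pays with the extra mesh-dependent bookkeeping; the paper's is shorter given the cited machinery. For the $L^\infty$ bound in $d=2$ the two strategies essentially coincide (track the $q$-growth of $C_q$, apply $\|\phi_h\|_{L^\infty}\leq Ch^{-d/q}\|\phi_h\|_{L^q}$, optimize $q\sim\log\tfrac1h$); your $\sqrt{q}$ tracking of the Gagliardo--Nirenberg constant even yields the slightly sharper factor $(\log\tfrac1h)^{1/2}$. One remark: you invoke $\div(\spaceV)=\spaceS$ to write $(\phi_h,g)=(\phi_h,\pi_h g)$, whereas the paper only states $\div(\spaceV)\subseteq\spaceS$; but the equality is in fact necessary for the lemma to hold as stated (otherwise any nonzero $\phi_h\perp\div(\spaceV)$ with $\bw=0$ is a counterexample), so this is an implicit hypothesis of the paper rather than a gap in your proof. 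You should also record, for the uniformity of the final constant in $q$ in the $L^\infty$ step, that the $L^{q'}$-stability constant of the elementwise projection $\pi_h$ and the inverse-estimate constants are bounded independently of $q$ by the usual reference-element scaling; this is routine but worth a sentence.
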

\begin{proof}
	The proof is similar to that given in~\cite[Lemma 1.2]{johnson1981error}, where, instead of \cite[1.4 c)]{johnson1981error}, we rely on the bound 
	\[\|\boldsymbol{I}_h\bv \|_{L^2} \leq \|\bv\|_{W^{1,s}},\] with $s \geq \tfrac{2d}{d+2}$. Thus the maximal $L^q$ regularity obtained is given by $q = \tfrac{s}{s-1}$, such that for $d=3$ we obtain $1 \leq q \leq 6$. For $d=2$, the same result as in \cite{johnson1981error} holds.
\begin{comment}
For $d=3$, we note that the proof in \cite{johnson1981error} is based on an elliptic regularity estimate which does not have any restriction on the dimension $d$ {(specifically by using \cite[Theorem 14.1]{agmon1959estimates}, itself based on the representation formula \cite[eq. (6.3), p.\ 659]{agmon1959estimates})}. \\
\indent	For the $L^\infty$ bound, {we again use \cite[Theorem 14.1]{agmon1959estimates}, supplemented by the Calder\'on--Zygmund lemma \cite[Theorem 1 and attached remark]{calderon1952existence} to characterize the dependence of the constant $C_q$ on $q$ in the estimate \(\|\phi_h\|_{L^q} \leq C_q \nLtwo{\bw}\). We then choose the constant $p= \tfrac{q}{q-1}$ similarly to \cite{johnson1981error}}. One then only needs to use a more general inverse estimate instead of the one given in 
	\cite[p. 45]{johnson1981error} {to obtain the desired $L^\infty$ bound for an arbitrary $d$}. To this end, we can rely on \cite[Theorem 4.5.11]{susanne2008mathematical}: 
	\begin{align}
		\nLinf{\phi_h} \leq Ch^{-d/q} \|\phi_h\|_{L^q} \quad \textrm{for } \phi_h\in \spaceS \textrm{ and } 1\leq q\leq\infty,
	\end{align}
which concludes the proof.	
\end{comment}
\end{proof}
With the following result, we generalize the mixed projection estimates given in \cite[Theorem 1.1]{johnson1981error} to the different elements considered in this work. %\vanja{Do these estimates hold for RT, BDDF, BDFM, and  CD elements? \Mos{Yes, since they have similar approximation properties \cite[Section 3.5]{chen2005finite} I removed the CD elements because there are three of them according to \cite{chen2005finite}}}
\begin{lemma}\label{lem:interpolation_error}
	Let $\psi \in H^{r+1}(\Om)$ and $\nabla \psi = \bv$ a.e.\ in $\Omega$. Then 
	the mixed projection of $(\psi, \bv)$ satisfies the following bounds a.e.\ in time: 
	\begin{equation} \label{mixed_projection}
		\begin{aligned}
			\left \|\psi-\tildePp{\psi}\right\|_{L^2} \leq&\, Ch^r  \left \| \psi\right\|_{H^{r}}, \quad &&2 \leq r \leq p^*, \\
			\left\|\bv-\tildePv{\bv} \right\|_{L^2} \leq&\, C h^r  \left\| \psi \right\|_{H^{r+1}}, \quad &&1 \leq r \leq p+1.\\
			%\left\|\psi-\tildePp{\psi} \right\|_{L^\infty} \leq&\, C h^r \log\frac1{h} \left\| \psi \right\|_{H^{r+1}}, \quad &&1 \leq r \leq p+1.
				\|\psi-\tildePp{\psi}\|_{L^q} \leq&\, C h^r  \left\| \psi \right\|_{H^{r+1}}, \quad &&1 \leq r \leq p+1, \ \left\{
				\begin{aligned}
					&{1 \leq q \leq 6 \ \ \, \textrm{if } \quad d = 3},\\
					&{1 \leq q < \infty \ \textrm{if } \quad d = 2}. %< \infty
				\end{aligned}
				\right.
			\end{aligned}
	\end{equation}
For $d=2$, we additionally have that
\[\|\psi-\tildePp{\psi}\|_{L^\infty} \leq\, C h^r \log \frac1{h} \left\| \psi \right\|_{H^{r+1}}, \quad 1 \leq r \leq p+1.\]
\end{lemma}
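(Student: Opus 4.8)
The plan is to derive all four bounds from the defining relations \eqref{eq:mixed_proj} by inserting the interpolation operator $\Ih$ as an intermediate approximant and exploiting Lemma~\ref{lem:elliptic_bounds}. First I would write $\psi - \tildePp{\psi} = (\psi - \pi_h \psi) + (\pi_h \psi - \tildePp{\psi})$ and $\bv - \tildePv{\bv} = (\bv - \Ih \bv) + (\Ih \bv - \tildePv{\bv})$, where $\pi_h$ is the $L^2$-projection onto $\div(\spaceV)\subseteq\spaceS$. Subtracting \eqref{eq:mixed_proj} from the analogous identities for $(\Ih\bv, \pi_h\psi)$ and using the commuting property $\div(\Ih\bv) = \pi_h\div\bv$ together with the first line of \eqref{eq:mixed_proj}, one finds that the discrete error $\boldsymbol{\theta} := \Ih\bv - \tildePv{\bv}$ satisfies $(\div\boldsymbol{\theta}, \phi_h)_{L^2} = 0$ for all $\phi_h\in\spaceS$, hence $\div\boldsymbol{\theta}=0$ (since $\div\spaceV\subseteq\spaceS$), and the second line gives $(\boldsymbol{\theta}, \bwh) + (\pi_h\psi - \tildePp{\psi}, \div\bwh)_{L^2} = (\bv - \Ih\bv, \bwh)$ for all $\bwh\in\spaceV$. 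Testing with $\bwh = \boldsymbol{\theta}$ and using $\div\boldsymbol{\theta}=0$ yields $\nLtwo{\boldsymbol{\theta}}^2 = (\bv - \Ih\bv, \boldsymbol{\theta})$, so $\nLtwo{\boldsymbol{\theta}}\leq \nLtwo{\bv - \Ih\bv}$. Combined with the triangle inequality and the interpolation estimate $\nLtwo{\bv - \Ih\bv}\leq Ch^r\|\bv\|_{H^r} \leq Ch^r\|\psi\|_{H^{r+1}}$ for $1\leq r\leq p+1$, this gives the second bound in \eqref{mixed_projection}.

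Next, for the $L^q$ and $L^\infty$ bounds on $\zeta := \pi_h\psi - \tildePp{\psi}$, I would observe that $\zeta\in\spaceS$ and that, by the identity above, $(\boldsymbol{\theta} - (\bv - \Ih\bv), \bwh) + (\zeta, \div\bwh)_{L^2} = 0$ for all $\bwh\in\spaceV$; that is, $(\bw, \bwh) + (\zeta, \div\bwh)_{L^2} = 0$ with $\bw := \boldsymbol{\theta} - (\bv - \Ih\bv)$. Applying Lemma~\ref{lem:elliptic_bounds} to $\zeta$ gives $\|\zeta\|_{L^q}\leq C_q\nLtwo{\bw}\leq C_q(\nLtwo{\boldsymbol{\theta}} + \nLtwo{\bv-\Ih\bv})\leq C_q\nLtwo{\bv-\Ih\bv}\leq C_qh^r\|\psi\|_{H^{r+1}}$ in the stated $q$-ranges, and similarly $\|\zeta\|_{L^\infty}\leq C\log\frac1h\, h^r\|\psi\|_{H^{r+1}}$ for $d=2$. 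For the last two lines of \eqref{mixed_projection} I then combine this with $\|\psi - \pi_h\psi\|_{L^q}$: since $\pi_h$ is the $L^2$-projection onto a finite-element space, its $L^q$-stability on quasi-uniform meshes (or a Bramble–Hilbert argument) gives $\|\psi - \pi_h\psi\|_{L^q}\leq Ch^{r}\|\psi\|_{W^{r,q}}\leq Ch^r\|\psi\|_{H^{r+1}}$ in the admissible range, using Sobolev embedding $H^{r+1}\hookrightarrow W^{r,q}$ for the relevant $q$.

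For the first bound, $\|\psi - \tildePp{\psi}\|_{L^2}\leq Ch^r\|\psi\|_{H^r}$ with $2\leq r\leq p^*$, I would again split as $\psi - \tildePp{\psi} = (\psi - \pi_h\psi) + \zeta$. The term $\|\psi - \pi_h\psi\|_{L^2}$ is controlled by the approximation property of $\pi_h$ on $\div(\spaceV)$, which has order $p^*$, giving $Ch^r\|\psi\|_{H^r}$ for $r\leq p^*$. For $\zeta$, rather than using the crude bound $\nLtwo{\bv-\Ih\bv}$ (which only has order $p+1\geq r$ but is measured in $\|\psi\|_{H^{r+1}}$, one derivative too many), I would use a duality (Aubin–Nitsche) argument: take the auxiliary elliptic problem $-\Delta\varphi = \zeta$ in $\Omega$, $\varphi|_{\partial\Omega}=0$, set $\boldsymbol{\chi} = \nabla\varphi$ so that $\div\boldsymbol{\chi} = -\zeta$, and use $\|\boldsymbol{\chi}\|_{H^1} + \|\varphi\|_{H^2}\leq C\|\zeta\|_{L^2}$ by elliptic regularity on the convex polygon $\Omega$. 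Then $\|\zeta\|_{L^2}^2 = -(\zeta, \div\boldsymbol{\chi})_{L^2}$, and inserting $\Ih\boldsymbol{\chi}$ and using the error relations together with $\div\boldsymbol{\theta}=0$ and the commuting diagram property, the right-hand side is bounded by $C(\nLtwo{\bv - \Ih\bv} + \|\psi - \pi_h\psi\|_{L^2})\,h\,\|\zeta\|_{L^2}$, which yields $\|\zeta\|_{L^2}\leq Ch(\nLtwo{\bv-\Ih\bv} + \|\psi - \pi_h\psi\|_{L^2})\leq Ch^r\|\psi\|_{H^r}$. This duality step is the main obstacle: one must track carefully which interpolation estimates are available for each element family (the $p$ versus $p^*$ discrepancy), make sure the extra power of $h$ is genuinely gained from approximating $\boldsymbol{\chi}$ and $\varphi$, and confirm that the convexity of $\Omega$ indeed supplies the needed $H^2$ elliptic regularity. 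The remaining bounds require no duality and follow directly from Lemma~\ref{lem:elliptic_bounds} as above.
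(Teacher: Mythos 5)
Your proposal is correct and follows essentially the same route as the paper, whose proof consists of a one-line appeal to the argument of Johnson--Thom\'ee (\cite[Theorem 1.1]{johnson1981error}) combined with the commuting-diagram and approximation properties of $\Ih$ and $\pi_h$ and with Lemma~\ref{lem:elliptic_bounds} --- precisely the splitting $\psi-\tildePp{\psi}=(\psi-\pi_h\psi)+\zeta$, $\bv-\tildePv{\bv}=(\bv-\Ih\bv)+\boldsymbol{\theta}$, the observation $\div\boldsymbol{\theta}=0$, and the Aubin--Nitsche duality step that you reconstruct. Your write-up simply makes explicit what the paper delegates to the reference.
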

\begin{proof}
	The bounds are obtained by applying the same reasoning as in~\cite[Theorem 1.1]{johnson1981error} with the properties of the interpolant $\Ih$ and $L^2$ projection $\pi_h$ given above, and Lemma~\ref{lem:elliptic_bounds}.
\end{proof}
\subsection*{Notation}
We shall frequently use the notation $x \lesssim y$ which stands for $x \leq Cy$, where $C$ is a generic constant that depends on the reference domain $\Omega$ and, possibly, the time $t$, but not on the discretization parameter. To simplify the notation we will omit the time interval when writing norms covering the whole open segment $(0,T)$, for example, \(\|\cdot\|^2_{L^p(H^r)}\) denotes the norm on $L^p(0,T;H^r)$.

\section{\emph{A priori} analysis of a linearized problem}\label{sec:lin}
Our approach in the analysis is based on combining \emph{a priori} bounds for a linearized problem with a fixed-point approach. To this end, we first consider a nondegenerate linearization of \eqref{IBVP_approx_Kuznetsov} by introducing variable coefficients $\alpha_h$ and $\beta_h$. The problem is  to find $(\ph, \bvh): [0,T] \mapsto \spaceS \times \spaceV$, such that
\begin{equation} \label{weak_approx_linear}
	\left \{\begin{aligned}
		& \begin{multlined}[t]((1+2k\alpha_h(x,t))\psi_{htt}, \phi_h)_{L^2}-c^2 (\nabla \cdot (\bvh+\tfrac{b}{c^2}\bvht), \phi_h)_{L^2}\\+2 \sigma(\boldsymbol{\beta}_h(x,t) \cdot \bvht, \phi_h)_{L^2}=(f, \phi_h)_{L^2},\end{multlined}\\[2mm]
		& (\bvh, \boldsymbol{w}_h)_{L^2}+(\psi_h, \nabla \cdot \boldsymbol{w}_h)_{L^2}= 0,
	\end{aligned} \right.
\end{equation} 
for all $(\phi_h, \bwh) \in \spaceS \times \spaceV$ a.e. in time, supplemented by approximate initial conditions \eqref{approx_initcond}. 

\begin{remark}
	Note that adjoint problems for nonlinear acoustic equations (e.g., in the context of PDE-constrained optimization problems) are of the form of \eqref{weak_approx_linear}; see \cite{meliani2021shape}, for example. Thus, the numerical analysis of the above linearized problem can also also be informative about %the analysis %\vanja{There is ``analysis" two times. Maybe  `` ... the numerical analysis of the above linearized problem can also also be informative about the discretization error of adjoint-based optimization problems."} of 
	the discretization error of adjoint-based optimization problems.
\end{remark}
%In particular, we impose the following uniform boundedness assumptions on the coefficients $\alpha_{h}$ and $\boldsymbol{\beta}_h$.
\begin{assumptionp}{K1}\label{Assumption_nondeg} 
	We assume that the coefficient $\alpha_h$ is non-degenerate; that is, there exist $\ulal$, $\olal>0$, independent of $h$, such that
	\[
	0 < 1-2|k|\ulal \leq 1+2k\alpha_h(x,t) \leq 1+2|k|\olal, \quad (x,t) \in \Om \times (0,T).
	\]
	Similarly, we assume that there exists $\overline{\beta}>0$, independent of $h$, such that
	\begin{equation}
		\begin{aligned}
			\nLinfLinf{\boldsymbol{\beta}_h} \leq \overline{\beta}.
		\end{aligned}
	\end{equation}
\end{assumptionp}
	
\noindent Introducing bases of $\spaceS$ and $\spaceV$, the semi-discrete problem can be written in the matrix form:
\begin{equation}
	\begin{aligned}
		M(t) \Psi_{tt}-c^2 B V -bB V_t+2\sigma L(t)V_t=&\, F(t), \\
		D V+B^T \Psi=0,
	\end{aligned}
\end{equation}
with $\Psi(0)$ and $\Psi_t(0)$ given. The matrix $M=M(t)$ is positive definite on account of the non-degeneracy assumption of the coefficient $\alpha_h$, and so we can rewrite the first equation as
\begin{equation}
	\begin{aligned}
		\Psi_{tt}-c^2 M(t)^{-1}B V -bM(t)^{-1}B V_t+2\sigma M(t)^{-1}L(t)V_t=&\,M(t)^{-1} F(t).
	\end{aligned}
\end{equation}
Thus, after eliminating $V$, we have a system of second-order linear ODEs with the coefficients and right-hand side in $L^2(0,T)$ which can be reduced to an ODE system of first order. Thus for  $f \in L^2(0,T; L^2(\Omega))$, from~\cite[Theorem 1.8]{agarwal2001fixed} and via a bootstrap argument, we infer that there exists a unique $(\ph, \bvh) \in H^2(0,T; \spaceS) \times H^2(0,T; \spaceV)$ which solves the linearized semi-discrete problem. \\[1mm]
\noindent {\bf Energy functionals.} To formulate the stability result, we introduce the total potential-velocity energy of system \eqref{eq:continuous_mixed_problem} at time $t \in [0,T]$ as
\begin{equation}
	\begin{aligned}
		E[\psi, \bv](t)=E_{\psi}(t)+E_{\bv}(t),
	\end{aligned}
\end{equation}
where the potential component is
\begin{equation}
	\begin{aligned}
		E_{\psi}(t)=\nLtwo{\psi(t)}^2+\nLtwo{\pt(t)}^2+\int_0^t\nLtwo{\ptt(s)}^2\ds
	\end{aligned}
\end{equation}
and the velocity component is
\begin{equation}\label{eq:kinetic_energy}
	\begin{aligned}
		E_{{\bv}}(t)=\nLtwo{\bv(t)}^2+\nLtwo{\bvt(t)}^2.
	\end{aligned}
\end{equation}

\color{black}
For the stability analysis we need to access the energy at $t=0$. While this can be done in a straightforward way for $E_\psi(0)$, evaluating $E_{{\bv}}(0)$ requires an additional step. That is, we need to evaluate the initial values $\bv(0) = \bv_0$ and $\bv_t(0) = \bv_1$. Given ($\psi_0$, $\psi_1$), this can be done using the weak form of the second equation of \eqref{eq:continuous_mixed_problem} 
\begin{align}\label{eq:init_cont}
	\prodLtwo{\bv_i}{\bw} + \prodLtwo{\psi_i}{\div \bw} = 0
\end{align}
for all $\bw \in \Hdiv$ and for $i=1,2$. In particular, $\bv_i = \nabla \psi_i$ a.e.\ provided $\psi_i$ is smooth enough.

Similarly, given ($\psi_{0h}$,$\psi_{1h}$), the values $\bvh(0) = \bv_{0h}$ and $\bv_{ht}(0) = \bv_{1h}$ are found by solving 
\begin{equation}\label{eq:init_data_mix}
	\begin{aligned}
		(\bv_{ih}, \boldsymbol{w}_h)+(\psi_{ih}, \nabla \cdot \boldsymbol{w}_h)_{L^2}=&\,0
	\end{aligned}
\end{equation}
for all $\bwh \in \spaceV$, where $i=1,2$. With this, we can fully define the initial discrete energy $E[\psi_h, \bvh](0)$. In what follows, we show that \eqref{weak_approx_linear} is stable in the energy norm under Assumption~\ref{Assumption_nondeg}.
\begin{proposition}\label{Prop:LinStability} Let $b>0$ and let Assumptions~\ref{Assumption_nondeg} hold. Given $f \in L^2(0,T; L^2)$, the solution $(\ph, \bv_{h})$ of \eqref{weak_approx_linear} satisfies the following stability bound:
	\begin{equation} \label{stability}
		\begin{aligned}
			E[\psi_h, \bvh](t) \lesssim 	E[\psi_h, \bvh](0)+ \nLtwoLtwot{f(s)}^2, \quad t \geq 0.
		\end{aligned}
	\end{equation}
\end{proposition}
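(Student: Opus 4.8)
The plan is a non-standard energy argument: test the first equation of \eqref{weak_approx_linear} successively with $\phi_h=\psi_{htt}$ and $\phi_h=\psi_{ht}$, and treat the velocity–potential coupling terms by differentiating the second equation in time. Since, as noted above, $(\psi_h,\bvh)\in H^2(0,T;\spaceS)\times H^2(0,T;\spaceV)$, the second equation of \eqref{weak_approx_linear} may be differentiated once and twice in time, giving for all $\bwh\in\spaceV$ and a.e.\ in time
\[
(\bvht,\bwh)_{L^2}+(\psi_{ht},\div\bwh)_{L^2}=0,\qquad (\boldsymbol{v}_{htt},\bwh)_{L^2}+(\psi_{htt},\div\bwh)_{L^2}=0 .
\]
These identities are what make the coupling terms tractable.

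\emph{Energy estimates.} Testing with $\phi_h=\psi_{htt}$, the leading term is coercive by Assumption~\ref{Assumption_nondeg}, $((1+2k\alpha_h)\psi_{htt},\psi_{htt})_{L^2}\ge(1-2|k|\ulal)\nLtwo{\psi_{htt}}^2$; inserting $\bwh=\bvh$ and $\bwh=\bvht$ into the twice time-differentiated second equation gives $(\div\bvh,\psi_{htt})_{L^2}=-(\boldsymbol{v}_{htt},\bvh)_{L^2}=-\tfrac{d}{dt}(\bvht,\bvh)_{L^2}+\nLtwo{\bvht}^2$ and $(\div\bvht,\psi_{htt})_{L^2}=-\tfrac12\tfrac{d}{dt}\nLtwo{\bvht}^2$. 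Bounding $2\sigma(\boldsymbol{\beta}_h\cdot\bvht,\psi_{htt})_{L^2}$ via Cauchy--Schwarz together with $\nLinfLinf{\boldsymbol{\beta}_h}\le\olbe$, and $(f,\psi_{htt})_{L^2}$, with Young's inequality (absorbing small multiples of $\nLtwo{\psi_{htt}}^2$ into the coercive term), I arrive at
\[
\nLtwo{\psi_{htt}}^2+\frac{d}{dt}\Big(c^2(\bvht,\bvh)_{L^2}+\tfrac b2\nLtwo{\bvht}^2\Big)\lesssim\nLtwo{\bvht}^2+\nLtwo{f}^2 .
\]
Testing instead with $\phi_h=\psi_{ht}$ and using the once time-differentiated second equation with $\bwh=\bvh$ and $\bwh=\bvht$ gives $(\div\bvh,\psi_{ht})_{L^2}=-\tfrac12\tfrac{d}{dt}\nLtwo{\bvh}^2$ and $(\div\bvht,\psi_{ht})_{L^2}=-\nLtwo{\bvht}^2$, so the strong-damping contribution $-b(\div\bvht,\psi_{ht})_{L^2}=b\nLtwo{\bvht}^2$ appears with a favourable sign; estimating $((1+2k\alpha_h)\psi_{htt},\psi_{ht})_{L^2}$, $2\sigma(\boldsymbol{\beta}_h\cdot\bvht,\psi_{ht})_{L^2}$ and $(f,\psi_{ht})_{L^2}$ by Young's inequality (with a small parameter $\varepsilon$ in front of $\nLtwo{\psi_{htt}}^2$) yields
\[
\tfrac b2\nLtwo{\bvht}^2+\tfrac{c^2}{2}\frac{d}{dt}\nLtwo{\bvh}^2\lesssim\varepsilon\,\nLtwo{\psi_{htt}}^2+\nLtwo{\psi_{ht}}^2+\nLtwo{f}^2 .
\]

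\emph{Combination and Grönwall.} I would form (first estimate)$\,+\,\lambda\,$(second estimate) with $\lambda$ large (of order $b^{-1}$) so that $\lambda\tfrac b2\nLtwo{\bvht}^2$ absorbs the $\nLtwo{\bvht}^2$ on the right of the first estimate, and then take $\varepsilon$ small to absorb the term $\lambda\varepsilon\nLtwo{\psi_{htt}}^2$; choosing $\lambda$ also large relative to $c^2/b$, a Young's inequality on the cross term $c^2(\bvht,\bvh)_{L^2}$ makes the accumulated boundary quantity equivalent to $\nLtwo{\bvh}^2+\nLtwo{\bvht}^2$. Adding small multiples of the elementary identities $\tfrac{d}{dt}\nLtwo{\psi_{ht}}^2\le\nLtwo{\psi_{htt}}^2+\nLtwo{\psi_{ht}}^2$ (with coefficient below the coercivity constant, so $\nLtwo{\psi_{htt}}^2$ is not lost) and $\tfrac{d}{dt}\nLtwo{\psi_h}^2\le\nLtwo{\psi_{ht}}^2+\nLtwo{\psi_h}^2$, I obtain
\[
\frac{d}{dt}G(t)+\nLtwo{\psi_{htt}}^2+\nLtwo{\bvht}^2\lesssim G(t)+\nLtwo{f}^2,\qquad G(t)\simeq\nLtwo{\psi_h}^2+\nLtwo{\psi_{ht}}^2+\nLtwo{\bvh}^2+\nLtwo{\bvht}^2 .
\]
Grönwall's inequality, followed by integrating the nonnegative terms $\nLtwo{\psi_{htt}}^2,\nLtwo{\bvht}^2$ in time, gives $G(t)+\int_0^t\nLtwo{\psi_{htt}}^2\ds\lesssim G(0)+\nLtwoLtwot{f(s)}^2$. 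Since $G(0)\lesssim E[\psi_h,\bvh](0)$ — with $\bvh(0),\bvht(0)$ defined through \eqref{eq:init_data_mix} and $\psi_h(0),\psi_{ht}(0)$ from \eqref{approx_initcond} — and $E[\psi_h,\bvh](t)\lesssim G(t)+\int_0^t\nLtwo{\psi_{htt}}^2\ds$ by definition of the energy, this is exactly \eqref{stability}.

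\emph{Main obstacle.} The delicate point is the treatment of $(\div\bvh,\psi_{htt})_{L^2}$ and $(\div\bvht,\psi_{htt})_{L^2}$: these cannot be bounded directly, because $\div\bvht$ carries $\nLtwo{\div\bvht}$, which is not controlled by the energy $E$. Rewriting them through the time-differentiated mixed relation — legitimate only because of the $H^2$-in-time regularity of the semi-discrete solution — converts them into (almost) time derivatives of the velocity energy, at the price of an extra $\nLtwo{\bvht}^2$ term that can be absorbed only thanks to the strong damping $b>0$ (via the test with $\psi_{ht}$); keeping careful track of this absorption is what forces $\lambda\sim b^{-1}$ and produces the degeneracy of the hidden constant as $b\to0^+$.
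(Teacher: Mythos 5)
Your proof is correct and follows essentially the same route as the paper: testing the first equation with a combination of $\psi_{ht}$ and $\psi_{htt}$ whose relative weight is of order $b$ (your $\lambda\sim b^{-1}$ recombination is the paper's multiplier $\psi_{ht}+\gamma\psi_{htt}$ with $\gamma=b/c^2$ in disguise), using the once- and twice-time-differentiated second equation to convert the divergence terms, and closing with Young and Gr\"onwall under a smallness restriction $\varepsilon\lesssim b$ that produces the same $b$-degenerate hidden constant. The only difference is bookkeeping: the paper picks $\gamma=b/c^2$ precisely so that the velocity contributions combine into $\tfrac{c^2}{2}\ddt\nLtwo{\bvh+\tfrac{b}{c^2}\bvht}^2$ and the product $(\bvh,\boldsymbol{v}_{htt})_{L^2}$ never appears, whereas you rewrite it as $\ddt(\bvh,\bvht)_{L^2}-\nLtwo{\bvht}^2$ and absorb the extra term using the strong damping, which is equally valid.
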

\begin{remark}
	The proof of Proposition~\ref{Prop:LinStability} relies on Gronwall's inequality. Subsequently the hidden constant is exponential in time. In the context of nonlinear ultrasound, this is acceptable as the lifespan of such waves is relatively short. In other applications such as viscoelasticity or if the problem is purely linear with variable coefficients, one can derive time-independent energy bounds on the error provided $\alpha_{h}$ and $\boldsymbol{\beta}_h$ are small enough in suitable norms. We also note that the hidden constant in \eqref{stability} tends to $+\infty$ as $b \rightarrow 0^+$.  Uniform discretization in $b$ is discussed in Section~\ref{sec:inv_West}. %The arbitration between imposing smallness of coefficients (and subsequently of data or time in the fixed-point argument) or accepting the exponential dependence on time is in our view application-dependent.  
\end{remark}
\begin{proof}
	The proof is based on a non-standard energy analysis. The strategy can be summarized as follows: \[\textup{I}\cdot(\psi_{ht}+\gamma \psi_{htt})+(\textup{II}_t+\gamma \textup{II}_{tt})\cdot c^2 (\bvh+\tfrac{b}{c^2}\bvht);\]
	that is, we multiply the first equation in \eqref{weak_approx_linear} by $\psi_{ht}+\gamma \psi_{htt}$, the time-differentiated second equation by $c^2 (\bvh+\tfrac{b}{c^2}\bvht)$, and the twice time-differentiated second equation by $\gamma c^2 (\bvh+\tfrac{b}{c^2}\bvht)$ with $\gamma>0$. This approach at first results in the following system:
	\begin{equation}
		\left \{\begin{aligned}
			& \begin{multlined}[t]((1+2k\alpha_h(x,t))\psi_{htt}, \psi_{ht}+\gamma \psi_{htt})_{L^2}-c^2 (\nabla \cdot (\bvh+\tfrac{b}{c^2}\bvht), \psi_{ht}+\gamma \psi_{htt})_{L^2}\\
				+2 \sigma(\boldsymbol{\beta}_h(x,t) \cdot \bvht, \psi_{ht}+\gamma \psi_{htt})_{L^2}=(f, \psi_{ht}+\gamma \psi_{htt})_{L^2}, \end{multlined}\\[2mm]
			&  \begin{multlined}[t](\bvht, c^2 (\bvh+\tfrac{b}{c^2}\bvht))_{L^2}+c^2(\psi_{ht}, \nabla \cdot  (\bvh+\tfrac{b}{c^2}\bvht))_{L^2}= 0,\end{multlined}\\[2mm]
			& \begin{multlined}[t]\gamma (\bv_{htt}, c^2 (\bvh+\tfrac{b}{c^2}\bvht))_{L^2}+c^2\gamma(\psi_{htt}, \nabla \cdot  (\bvh+\tfrac{b}{c^2}\bvht))_{L^2}= 0.\end{multlined}
		\end{aligned} \right.
	\end{equation}	
	Summing up these equations and noting that the $\nabla \cdot$ terms cancel out then leads to
	\begin{equation}
		\begin{aligned}
			\begin{multlined}[t]((1+2k\alpha_h(x,t))\psi_{htt}, \psi_{ht}+\gamma \psi_{htt})_{L^2}+2 \sigma(\boldsymbol{\beta}_h(x,t) \cdot \bvht, \psi_{ht}+\gamma \psi_{htt})_{L^2} \\+
				(\bvht+\gamma \bv_{htt}, c^2 (\bvh+\tfrac{b}{c^2}\bvht))_{L^2}=(f, \psi_{ht}+\gamma \psi_{htt})_{L^2}. \end{multlined}
		\end{aligned} 
	\end{equation}
	%Recalling Assumption~\ref{Assumption_nondeg}
	Choosing $\gamma=b/c^2$, so as to write the term 
	\begin{equation}
		\begin{aligned}
			(\bvht+\gamma \bv_{htt}, c^2 (\bvh+\tfrac{b}{c^2}\bvht))_{L^2} =&\,   \frac{c^2}{2}\ddt \nLtwo{\bvh+\frac{b}{c^2}\bvht}^2 \\
			=&\, \frac{c^2}{2}\ddt \nLtwo{\bvh}^2+\frac{b^2}{2c^2}\ddt \nLtwo{\bvht}^2+b\ddt \prodLtwo{\bvh}{\bvht},
		\end{aligned}
	\end{equation}
	thus eliminating the need to estimate the product $\prodLtwo{\bvh}{\bv_{htt}}$, we arrive at the following energy identity:
	\begin{equation}
		\begin{aligned}
			\begin{multlined}[t] \frac{b}{c^2} \nLtwo{\sqrt{1+2k\alpha_h}\,\ptth}^2+\frac12 \ddt \nLtwo{\pth}^2+\frac{c^2}{2}\ddt \nLtwo{\bvh}^2+\frac{b^2}{2c^2}\ddt \nLtwo{\bvht}^2
				\\
				=-2k\prodLtwo{\alpha_{h}\phtt}{\pht}
				-2\sigma(\boldsymbol{\beta}_h(x,t) \cdot \bvht, \psi_{ht}+\frac{b}{c^2} \psi_{htt})_{L^2}-b\ddt \prodLtwo{\bvh}{\bvht}\\+ (f, \psi_{ht}+\frac{b}{c^2} \psi_{htt})_{L^2}.\end{multlined}
		\end{aligned}
	\end{equation}
	Recalling Assumption~\ref{Assumption_nondeg}, we can estimate the right-hand-side terms, for some arbitrary $\varepsilon>0$, as follows:
	\begin{align}
		&-2k\prodLtwo{\alpha_{h}\phtt}{\pht} \leq \frac{k^2\olal^2}{\varepsilon} \nLtwo{\psi_{ht}}^2 + \varepsilon \nLtwo{\psi_{htt}}^2, \end{align}
	and
	\begin{align}
		-2\sigma\prodLtwo{\boldsymbol{\beta}_h(x,t) \cdot \bvht}{\psi_{ht}+\frac{b}{c^2} \psi_{htt}} 
		& \leq 2|\sigma| \olbe \nLtwo{\boldsymbol{v}_{ht}}\left(\nLtwo{\psi_{ht}} + \frac{b}{c^2} \nLtwo{\psi_{htt}} \right) \\
		& \leq  \sigma^2 \olbe^2 \nLtwo{\boldsymbol{v}_{ht}}^2 \left(1 + \frac{b^2}{\varepsilon c^4} \right) + \nLtwo{\psi_{ht}}^2 +  \varepsilon \nLtwo{\psi_{htt}}^2.
	\end{align}
	Similarly, we estimate 
	\begin{align}
		(f, \psi_{ht}+\frac{b}{c^2} \psi_{htt})_{L^2} &\leq \nLtwo{f} \left( \nLtwo{\pht} + \frac{b}{c^2} \nLtwo{\phtt} \right)\\
		& \leq  \nLtwo{f}^2  \left(\frac14 + \frac{b^2}{4\varepsilon c^4} \right) + \nLtwo{\psi_{ht}}^2 +  \varepsilon \nLtwo{\psi_{htt}}^2.
	\end{align}
	Finally, because we intend to integrate over time, we estimate
	\begin{align}
		&\begin{multlined}[t]
			- b\intT \ddt \prodLtwo{\bvh(s)}{\bvht(s)} \ds = -b \left[\prodLtwo{\bvh(t)}{\bvht(t)} - \prodLtwo{\bvh(0)}{\bvht(0)} \right] \\
			\leq b  \nLtwo{\bvh(t)}\nLtwo{\bvht(t)} + b  \nLtwo{\bvh(0)}\nLtwo{\bvht(0)} \\
			\leq  \frac{b^2}{4\varepsilon} \nLtwo{\bvh(t)}^2 + \varepsilon \nLtwo{\bvht(t)}^2 
			+ \frac{b^2}{2\varepsilon} \nLtwo{\bvh(0)}^2 + \frac{\varepsilon}{2} \nLtwo{\bvht(0)}^2,
		\end{multlined}
		\intertext{which on account of}
		&\nLtwo{\bvh(t)} = \nLtwo{\intT \bvht(s)\ds + \bvh(0)}   \leq \sqrt{t} \left(\intT \nLtwo{\bvht(s)}^2\ds \right)^{1/2}   + \nLtwo{\bvh(0)},
		\intertext{yields}
		&\begin{multlined}[t] 
			-b\intT \ddt \prodLtwo{\bvh(s)}{\bvht(s)} \ds \leq \frac{b^2}{2\varepsilon}\, t \intT \nLtwo{\bvht(s)}^2\ds + \varepsilon \nLtwo{\bvht(t)}^2 \\
			+ \frac{b^2}{\varepsilon} \nLtwo{\bvh(0)}^2 + \frac{\varepsilon}{2} \nLtwo{\bvht(0)}^2.
		\end{multlined}
	\end{align}
	
	\noindent Integrating in time on $(0,t)$, for some $0<t<T$, we obtain the following inequality:
	\begin{equation}
		\begin{aligned} 
			&\begin{multlined}[t]\left(\frac{b}{c^2}  \right.(1-2|k|\ulal)\left.\vphantom{\frac11} - 3 \varepsilon \right) \intT \nLtwo{\ptth(s)}^2 \ds +\frac12 \nLtwo{\pth(t)}^2
				+\frac{c^2}{2} \nLtwo{\bvh(t)}^2
				\\+\left(\frac{b^2}{2c^2}-\varepsilon \right) \nLtwo{\bvht(t)}^2 \end{multlined}
			\\
			\leq&\, \begin{multlined}[t] \left(\frac{k^2\olal^2}{\varepsilon} +2\right)\intT \nLtwo{\psi_{ht(s)}}^2 \ds
				+ \left[\sigma^2 \olbe^2 \left(1 + \frac{b^2}{\varepsilon c^4} \right) + \frac{b^2}{2\varepsilon}\, t\right]\intT \nLtwo{\boldsymbol{v}_{ht}(s)}^2 \ds 
				\\+  \left(\frac14 + \frac{b^2}{4\varepsilon c^4} \right)\intT \nLtwo{f(s)}^2 \ds + \frac12 \nLtwo{\pth(0)}^2
				+\left(\frac{c^2}{2} + \frac{b^2}{\varepsilon}\right)\nLtwo{\bvh(0)}^2\\+\left(\frac{b^2}{2c^2}+\frac{\varepsilon}{2} \right) \nLtwo{\bvht(0)}^2.\end{multlined}
		\end{aligned}
	\end{equation}
	\noindent Setting 
	\begin{equation}\label{eq:b_ref}
		0<\varepsilon < \min(\frac{b}{3c^2} (1-2|k|\ulal), \frac{b^2}{2c^2}),
	\end{equation}
	using Gronwall's inequality, and then using 
	\[\nLtwo{\psi_h(t)} \leq \sqrt{t} \left(\intT \nLtwo{\psi_{ht}(s)}^2\ds \right)^{1/2}   + \nLtwo{\psi_h(0)}\]
	yields the desired estimate.
\end{proof}

We next perform the error analysis for the semi-discrete linearized problem in a general setting, which allows for the coefficients to be approximations of the exact ones. To this end, we assume the exact solution of \eqref{eq:continuous_problem} to be smooth enough in the following sense. Given $r \in \{2, \ldots, p^*\}$, we assume that $\psi$ belongs to 
\begin{equation} \label{solution_space}
	\begin{aligned}
		X_{r+1} = \left\{ \psi \in L^{\infty}(0,T; H_0^1(\Omega) \cap H^{r+1}(\Omega)) \, \right. |& \left. \, \pt \in L^{\infty}(0,T; H_0^1(\Omega) \cap H^{r+1}(\Omega)),\right. \\ &\,\left. \ptt \in L^{2}(0,T; H_0^1(\Omega) \cap H^{r}(\Omega))  \right\}.
	\end{aligned}
\end{equation}
We denote by $\|\cdot\|_{X_{r+1}}$ the norm associated to this space. With this assumed regularity of $\psi$, formulations \eqref{eq:continuous_problem} and \eqref{eq:continuous_mixed_problem} are equivalent when supplemented by the same initial and boundary data. For $r=2$, the global well-posedness of the Dirichlet initial boundary-value problem in $X_{r+1}$ for the Kuznetsov equation with $f=0$ follows by the analysis in~\cite{mizohata1993global}, under the assumption of sufficiently smooth and small data. Higher-order regularity follows by~\cite{kawashima1992global} under stronger regularity and smallness assumptions on the initial conditions and higher-order compatibility of the initial and boundary data. 
\begin{assumptionp}{K2}\label{Assumption_reg} 
	For a given integer $r \in \{2, \ldots, p^*\}$, we assume that the coefficients $\alpha_h$ and $\boldsymbol{\beta}_h$ approximate $\psi_t$ and $\bv$ up to the following accuracy:
	\begin{equation}
		\begin{aligned}
			&\nLtwo{\pt(t)-\alpha_h(t)} \leq C_* h^{r} \|\psi\|_{X_{r+1}},\\
			&\nLtwo{\boldsymbol{v}(t)-\boldsymbol{\beta}_h(t)} \leq C_* h^{r} \|\psi\|_{X_{r+1}},
			%&\int_0^t\nLtwo{f(s)-f_h(s)}^2\ds \lesssim h^{2r} \left\{\|\psi(t)\|^2_{H^{r+1}}+\|\pt(t)\|^2_{H^r}+\int_0^t \|\ptt(s)\|_{H^r}^2\ds\right\}.
		\end{aligned}
	\end{equation}
	for all $t\in [0,T)$, where $C_*>0$ does not depend on $h$.
\end{assumptionp}
\noindent We note that the error $(e_{\psi}, \boldsymbol{e}_{\bv}):=(\psi-\ph, \bv-\bvh)$ satisfies
\begin{equation} \label{linear_error_eq}
	\left \{\begin{aligned}
		& \begin{multlined}[t]((1+2k\alpha_h(x,t))e_{\psi tt}, \phi_h)_{L^2}+2k((\pt-\alpha_h)\psi_{tt}, \phi_h)_{L^2}\\-c^2 (\nabla \cdot (\boldsymbol{e}_{\boldsymbol{v}}
			+\tfrac{b}{c^2}\boldsymbol{e}_{\boldsymbol{v}t}), \phi_h)_{L^2} \\+2 \sigma (\boldsymbol{\beta}_h(x,t) \cdot \boldsymbol{e}_{\boldsymbol{v}t}, \phi_h)_{L^2}
			+2\sigma((\bv-\boldsymbol{\beta}_h) \cdot \bvt, \phi_h)_{L^2}=0,\end{multlined}\\[2mm]
		& (\boldsymbol{e}_{\boldsymbol{v}}, \boldsymbol{w}_h)_{L^2}+(e_{\psi}, \nabla \cdot \boldsymbol{w}_h)_{L^2}= 0,
	\end{aligned} \right.
\end{equation} 
for all $(\phi_h, \bwh) \in \spaceS \times \spaceV$ a.e.\ in time with 
\begin{align}
	(e_{\psi}, \boldsymbol{e}_{\bv})(0) &= (\psi_0-\psi_{0h}, \bv_0-\bv_{0h}),\\
	(e_{\psi t}, \boldsymbol{e}_{\bv t})(0) &= (\psi_1-\psi_{1h}, \bv_1-\bv_{1h});
\end{align}
see \eqref{eq:init_cont} and \eqref{eq:init_data_mix} for the way to set approximate initial data. By employing the mixed projection defined in \eqref{eq:mixed_proj}, we split the error as follows:
\begin{equation} \label{split_error}
	\begin{aligned}
		& e_{\psi}=\psi-\ph=(\psi-\tildePp{\psi})+(\tildePp{\psi} -\psi_h):= \tilde{e}_{\psi}+\tilde{e}_{\psi h},\\
		&  \boldsymbol{e}_{\bv}= (\bv-\tildePv{\bv})+(\tildePv{\bv}-\bvh):=\tilde{\boldsymbol{e}}_{\bv}+\tilde{\boldsymbol{e}}_{\boldsymbol{v}h}.
	\end{aligned}
\end{equation}
Thus $(\tilde{e}_{\psi h}, \tilde{\boldsymbol{e}}_{\bv h})$ can be seen as the solution to 
\begin{equation} 
	\left \{\begin{aligned}
		& \begin{multlined}[t]((1+2k\alpha_h(x,t))(\tilde{e}_{\psi tt}+\tilde{e}_{\psi h tt}), \phi_h)_{L^2}+2k((\pt-\alpha_h)\psi_{tt}, \phi_h)_{L^2}\\
			-c^2 (\nabla \cdot (\tilde{\boldsymbol{e}}_{\boldsymbol{v} h}+\tfrac{b}{c^2}\tilde{\boldsymbol{e}}_{\boldsymbol{v} h t}), \phi_h)_{L^2}+2 \sigma(\boldsymbol{\beta}_h(x,t) \cdot \tilde{\boldsymbol{e}}_{\boldsymbol{v}t}+\tilde{\boldsymbol{e}}_{\boldsymbol{v} h t}, \phi_h)_{L^2}\\
			+2\sigma((\bv-\boldsymbol{\beta}_h) \cdot \bvt, \phi_h)_{L^2}=0,\end{multlined}\\[2mm]
		& (\tilde{\boldsymbol{e}}_{\boldsymbol{v}h}, \boldsymbol{w}_h)_{L^2}+(\tilde{e}_{\psi h}, \nabla \cdot \boldsymbol{w}_h)_{L^2}= 0.
	\end{aligned} \right.
\end{equation} 
Equivalently,
\begin{equation} \label{linear_error_proj_eq}
	\left \{\begin{aligned}
		& \begin{multlined}[t]((1+2k\alpha_h(x,t))\tilde{e}_{\psi h tt}, \phi_h)_{L^2}-c^2 (\nabla \cdot (\tilde{\boldsymbol{e}}_{\boldsymbol{v} h}+\tfrac{b}{c^2}\tilde{\boldsymbol{e}}_{\boldsymbol{v} h t}), \phi_h)_{L^2}\\+2 \sigma(\boldsymbol{\beta}_h(x,t) \cdot \tilde{\boldsymbol{e}}_{\boldsymbol{v} h t}, \phi_h)_{L^2}=(\tilde{f}, \phi_h)_{L^2},\end{multlined}\\[2mm]
		& (\tilde{\boldsymbol{e}}_{\boldsymbol{v}h}, \boldsymbol{w}_h)_{L^2}+(\tilde{e}_{\psi h}, \nabla \cdot \boldsymbol{w}_h)_{L^2}= 0
	\end{aligned} \right.
\end{equation} 
with the right-hand side given by
\[
\tilde{f}=-(1+2k\alpha_h)\tilde{e}_{\psi tt}-2k(\pt-\alpha_h)\psi_{tt}
-2\sigma \boldsymbol{\beta}_h(x,t) \cdot \tilde{\boldsymbol{e}}_{\boldsymbol{v}t}-2\sigma(\bv-\boldsymbol{\beta}_h)\cdot \bv_t.
\]
By the stability result of Proposition~\ref{Prop:LinStability}, we immediately have
\begin{equation} \label{energy_error}
	\begin{aligned}
		E[\tilde{e}_{\psi h}, \tilde{\boldsymbol{e}}_{\boldsymbol{v}h}](t) \lesssim 	E[\tilde{e}_{\psi h}, \tilde{\boldsymbol{e}}_{\boldsymbol{v}h}](0)+\int_0^t \nLtwo{\tilde{f}(s)}^2\ds, \quad t \geq 0,
	\end{aligned}
\end{equation}
which allows us to state the following error estimate for the linearized problem.
\begin{proposition}\label{Prop:LinL2} Let $b>0$. Let $2 \leq r \leq p^*$, and Assumptions~\ref{Assumption_nondeg} and \ref{Assumption_reg} hold. Furthermore, let $\psi \in X_{r+1}$ be the solution of the exact problem \eqref{eq:continuous_problem} with a sufficiently smooth source term $f$, and coupled with homogeneous Dirichlet data and suitable initial conditions $(\psi(0),\psi_t(0))=(\psi_0,\psi_1)$ and let $v = \nabla \psi$. 	
	Let the approximate initial conditions of \eqref{IBVP_approx_Kuznetsov} be set through the mixed projection
	\begin{equation}\label{eq:init_propL}
		\begin{aligned}
			(\psi_{0h}, \bv_{0h})=(\tildePp{\psi_0}, \tildePv{\bv_0}), \\
			(\psi_{1h}, \bv_{1h})=(\tildePp{\psi_1}, \tildePv{\bv_1}),
		\end{aligned} 
	\end{equation}
	where $v_0$ and $v_1$ are obtained through \eqref{eq:init_cont}.
	Then the solution $(\ph, \bvh)$ of 
	\eqref{weak_approx_linear} satisfies the following bound:
	\begin{equation} \label{energy_error3} 
		\begin{aligned}
			&\begin{multlined}[t] 
				\nLtwo{\psi(t)-\ph(t)}^2+\nLtwo{\pt(t)-\pth(t)}^2
				+\int_0^t \nLtwo{\ptt(s)-\phtt (s)}^2 \ds \\ +\nLtwo{\bv(t)-\bvh(t)}^2+\nLtwo{\bv_t(t)-\bvht(t)}^2 \lesssim\, h^{2r} \|\psi\|^2_{X_{r+1}} \end{multlined}
		\end{aligned}
	\end{equation}
	for $t \in [0,T]$, where the hidden constant has the following form:
	\[
	C_{\textup{lin}} = C(T) \left(1+C_*^2\left(k^2 \|\psi_{tt}\|_{L^2(0,T; L^\infty)}^2+ \sigma^2 \|\bvt\|_{L^2(0,T; L^\infty)}^2\right)\right).
	\]
\end{proposition}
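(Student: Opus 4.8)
The plan is to bound the full error $(e_\psi, \boldsymbol{e}_{\boldsymbol{v}})$ via the splitting \eqref{split_error} into the projection part $(\tilde{e}_\psi, \tilde{\boldsymbol{e}}_{\boldsymbol{v}})$ and the discrete part $(\tilde{e}_{\psi h}, \tilde{\boldsymbol{e}}_{\boldsymbol{v}h})$, and treat them separately. Because the mixed projection \eqref{eq:mixed_proj} is linear and time-independent, it commutes with $\partial_t$, so $\tilde{e}_{\psi tt} = \psi_{tt} - \tildePp{\psi_{tt}}$ and $\tilde{\boldsymbol{e}}_{\boldsymbol{v}t} = \boldsymbol{v}_t - \tildePv{\boldsymbol{v}_t}$, and applying the estimates \eqref{mixed_projection} of Lemma~\ref{lem:interpolation_error} to $\psi$, $\psi_t$, $\psi_{tt}$ bounds every term entering $E[\tilde{e}_\psi, \tilde{\boldsymbol{e}}_{\boldsymbol{v}}](t)$ by $h^{2r}\|\psi\|_{X_{r+1}}^2$; here $r \le p^* \le p+1$ makes all index ranges admissible and $\psi, \psi_t \in L^\infty(0,T;H^{r+1})$, $\psi_{tt} \in L^2(0,T;H^{r})$ from the definition of $X_{r+1}$ supply the needed regularity. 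For the discrete part, $(\tilde{e}_{\psi h}, \tilde{\boldsymbol{e}}_{\boldsymbol{v}h})$ solves the linearized system \eqref{linear_error_proj_eq} with forcing $\tilde{f}$, so Proposition~\ref{Prop:LinStability} gives exactly \eqref{energy_error}; it then remains to show $E[\tilde{e}_{\psi h}, \tilde{\boldsymbol{e}}_{\boldsymbol{v}h}](0) = 0$ and to estimate $\int_0^t \|\tilde{f}(s)\|_{L^2}^2\,\mathrm{d}s$.

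For the initial energy, the choice \eqref{eq:init_propL} is the decisive point. With $\psi_{ih} = \tildePp{\psi_i}$ we get $\tilde{e}_{\psi h}(0) = \tilde{e}_{\psi h t}(0) = 0$ at once. For the velocity, $\boldsymbol{v}_{ih}$ is defined by $(\boldsymbol{v}_{ih}, \boldsymbol{w}_h) + (\tildePp{\psi_i}, \nabla\cdot\boldsymbol{w}_h)_{L^2} = 0$ for all $\boldsymbol{w}_h \in \spaceV$, while the defining relations \eqref{eq:mixed_proj} combined with \eqref{eq:init_cont} (using $\boldsymbol{v}_i = \nabla\psi_i$) give $(\tildePv{\boldsymbol{v}_i}, \boldsymbol{w}_h) + (\tildePp{\psi_i}, \nabla\cdot\boldsymbol{w}_h)_{L^2} = 0$ for the same test functions; since $(\cdot,\cdot)_{L^2}$ is positive definite on $\spaceV$, $\boldsymbol{v}_{ih} = \tildePv{\boldsymbol{v}_i}$, hence $\tilde{\boldsymbol{e}}_{\boldsymbol{v}h}(0) = \tilde{\boldsymbol{e}}_{\boldsymbol{v}h t}(0) = 0$. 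The only other contribution to $E[\tilde{e}_{\psi h}, \tilde{\boldsymbol{e}}_{\boldsymbol{v}h}](0)$ is $\int_0^0\|\tilde{e}_{\psi h tt}\|^2$, which vanishes, so the initial discrete energy is zero.

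Next I would estimate $\tilde{f} = -(1+2k\alpha_h)\tilde{e}_{\psi tt} - 2k(\pt - \alpha_h)\psi_{tt} - 2\sigma\boldsymbol{\beta}_h\cdot\tilde{\boldsymbol{e}}_{\boldsymbol{v}t} - 2\sigma(\boldsymbol{v}-\boldsymbol{\beta}_h)\cdot\boldsymbol{v}_t$ in $L^2(\Omega)$, pointwise in time. The first and third terms use the uniform bounds $|1+2k\alpha_h| \le 1 + 2|k|\olal$ and $\|\boldsymbol{\beta}_h\|_{L^\infty} \le \olbe$ of Assumption~\ref{Assumption_nondeg} followed by the projection bounds, giving $O(h^{r})$ contributions controlled by $\|\psi\|_{X_{r+1}}$. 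The second and fourth terms are the genuine coefficient-error terms: I bound $\|(\pt - \alpha_h)\psi_{tt}\|_{L^2} \le \|\psi_{tt}\|_{L^\infty}\|\pt - \alpha_h\|_{L^2} \le C_* h^{r}\|\psi_{tt}\|_{L^\infty}\|\psi\|_{X_{r+1}}$ and likewise $\|(\boldsymbol{v}-\boldsymbol{\beta}_h)\cdot\boldsymbol{v}_t\|_{L^2} \le C_* h^{r}\|\boldsymbol{v}_t\|_{L^\infty}\|\psi\|_{X_{r+1}}$ using Assumption~\ref{Assumption_reg}; squaring and integrating over $(0,T)$ produces precisely the factors $\|\psi_{tt}\|_{L^2(0,T;L^\infty)}^2$ and $\|\boldsymbol{v}_t\|_{L^2(0,T;L^\infty)}^2$ appearing in $C_{\textup{lin}}$. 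Feeding these into \eqref{energy_error} bounds the discrete part by $h^{2r}\|\psi\|_{X_{r+1}}^2$ with the stated constant, and a triangle inequality of the form $\|a+b\|^2 \le 2\|a\|^2 + 2\|b\|^2$ applied termwise (reflecting the way $E_\psi$ and $E_{\boldsymbol{v}}$ are assembled in \eqref{energy_error3}) combines the projection and discrete parts to yield \eqref{energy_error3}.

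The main obstacle, mild as it is, is the pair of coefficient-error terms $2k(\pt - \alpha_h)\psi_{tt}$ and $2\sigma(\boldsymbol{v}-\boldsymbol{\beta}_h)\cdot\boldsymbol{v}_t$: one cannot pair the $L^2$-accuracy of $\alpha_h$, $\boldsymbol{\beta}_h$ against $\|\psi_{tt}\|_{L^2}$, $\|\boldsymbol{v}_t\|_{L^2}$, since those products only lie in $L^1(\Omega)$; the remedy is to absorb an $L^\infty(\Omega)$ norm onto the smooth factor, which is affordable exactly because $\psi \in X_{r+1}$ with $r \ge 2$ embeds $\psi_{tt}(t), \boldsymbol{v}_t(t)$ into $L^\infty(\Omega)$ for $d \in \{2,3\}$, and this is what forces the $L^2(0,T;L^\infty)$ norms into $C_{\textup{lin}}$. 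A secondary subtlety is the verification that the mixed-projection data in \eqref{eq:init_propL} make the discrete initial energy vanish identically, so that no $O(1)$ or rate-mismatched term creeps in; this is precisely where a nodal-type interpolation of the initial data would fail and the mixed projection is needed.
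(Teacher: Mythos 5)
Your proposal is correct and follows essentially the same route as the paper: the same splitting \eqref{split_error}, the same verification that the mixed-projection initial data force $E[\tilde{e}_{\psi h},\tilde{\boldsymbol{e}}_{\boldsymbol{v}h}](0)=0$ (your uniqueness argument for $\boldsymbol{v}_{ih}=\tildePv{\boldsymbol{v}_i}$ is the content of the paper's remark on equivalence of the semi-discrete formulations, and the paper gets $\tilde{\boldsymbol{e}}_{\boldsymbol{v}h}(0)=0$ by testing the second error equation with $\tilde{\boldsymbol{e}}_{\boldsymbol{v}h}(0)$, which is the same computation), and the same H\"older pairing of the $L^\infty(0,t;L^2)$ coefficient errors against $\|\psi_{tt}\|_{L^2(0,t;L^\infty)}$ and $\|\boldsymbol{v}_t\|_{L^2(0,t;L^\infty)}$ in the bound for $\tilde f$, which is exactly where the stated form of $C_{\textup{lin}}$ comes from.
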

\begin{remark}[Equivalence of semi-discrete formulations]
	The choice of approximate initial data \eqref{eq:init_propL} aligns with the choice made for the stability analysis and in the preceding discussion. Indeed, subtracting the second equation of the mixed projection:
	\begin{equation}
		\left\{	\begin{aligned}
			(\nabla \cdot(\bv_i-\bv_{ih}), \phi_h)_{L^2}=&\,0,\quad  &&\forall \phi_h \in \spaceS,\\
			(\bv_i-\bv_{ih}, \boldsymbol{w}_h)+(\psi_i-\psi_{ih}, \nabla \cdot \boldsymbol{w}_h)_{L^2}=&\,0, \quad &&\forall \bwh \in \spaceV,
		\end{aligned}
		\right.
	\end{equation}
	from \eqref{eq:init_cont} yields \eqref{eq:init_data_mix} for $i=1,2$.
	
\end{remark}	
\begin{comment}
	\begin{remark}\label{rm:BDM}
		Proposition~\ref{Prop:LinL2} provides an \textit{a priori} estimate in the energy norm 
		$E[\cdot, \cdot]$. Naturally \textit{a priori} estimates for $\|\cdot\|_{L^\infty(L^2)}$ for both $\psi$ and $\bv$ can be inferred from it. In the case of BDM elements for which the interpolation approximation is better for $\bv$ compared to $\psi$'s (see Lemma~\ref{lem:interpolation_error}, it might be possible to show that the interpolation error of $\bv$ can be retrieved as was shown in the "unperturbed case" in \cite[Theorem 5.2.5]{brezzi2012mixed} or above in Lemma~\ref{lem:interpolation_error}. 
	\end{remark}
\end{comment}
\begin{proof}
	With our choice of the approximate initial data, we know that
	\[
	\tilde{e}_{\psi h}(0)= \tildePp{\psi}(0)-\psi_h(0)=0, \qquad \tilde{e}_{\psi h t}(0)= \tildePp{\psi_t}(0)-\psi_{ht}(0)=0.
	\]
	Furthermore, by choosing $\boldsymbol{w}_h=\tilde{\boldsymbol{e}}_{\boldsymbol{v}h}(0)$ in
	\[
	(\tilde{\boldsymbol{e}}_{\boldsymbol{v}h}, \boldsymbol{w}_h)_{L^2}+(\tilde{e}_{\psi h}, \nabla \cdot \boldsymbol{w}_h)_{L^2}= 0, \ t \in [0,T],
	\]
	and proceeding similarly in the time-differentiated equation, we have
	\[
	\tilde{\boldsymbol{e}}_{\bv h}(0)= \tildePv{\bv}(0)-\bv_h(0)=0, \qquad \tilde{\boldsymbol{e}}_{\bv h t}(0)= \tildePv{\bv_t}(0)-\bv_{ht}(0)=0.
	\]
	Therefore, 
	\[
	E[\tilde{e}_{\psi h}, \tilde{\boldsymbol{e}}_{\boldsymbol{v}h}](0)=0
	\]
	and the bound \eqref{energy_error} reduces to
	\begin{equation} \label{eq:red_energy_error}
		\begin{aligned}
			E[\tilde{e}_{\psi h}, \tilde{\boldsymbol{e}}_{\boldsymbol{v}h}](t) \lesssim  \int_0^t \nLtwo{\tilde{f}(s)}^2\ds, \quad t \geq 0.
		\end{aligned}
	\end{equation}	
	We can estimate $\tilde{f}$ by relying on H\"older's inequality	
	\begin{equation}
		\begin{aligned}
			\|\tilde{f}\|_{L^2(0,t; L^2)} & \leq \begin{multlined}[t] (1+2|k|\olal) \|\tilde{e}_{\psi tt}\|_{L^2(0,t; L^2)}+2|k|\|\psi_t-\alpha_h\|_{L^\infty(0,t; L^2)}\|\psi_{tt}\|_{L^2(0,t; L^\infty)}\\
				+2|\sigma|\overline{\beta}\|\tilde{\boldsymbol{e}}_{\bv t}\|_{L^2(0,t; L^2)} +  2|\sigma|\|\bv-\boldsymbol{\beta}_h\|_{L^\infty(0,t;L^2)}\|\boldsymbol{v}_t\|_{L^2(0,t; L^\infty)}\end{multlined}\\
			& \lesssim \begin{multlined}[t]  h^r \|\psi \|_{X_{r+1}}+ |k| C_* h^r \|\psi\|_{X_{r+1}}\|\psi_{tt}\|_{L^2(0,t; L^\infty)}+ C_* h^r \|\psi\|_{X_{r+1}}\|\bvt\|_{L^2(0,t; L^\infty)},\end{multlined}
		\end{aligned}
	\end{equation}
	where, in the last line, we have used the approximation properties of the mixed projection given in \eqref{mixed_projection}, the regularity of the solution to the continuous problem, and Assumptions~\ref{Assumption_nondeg} and \ref{Assumption_reg}. 
	
	Additionally, a bound on $\nLtwo{\psi(t)-\psi_{h}(t)}$ can be obtained using
	\[
	\nLtwo{\tildePp\psi(t)-\psi_h(t)}=\left \|\int_0^t (\tildePp\psi_t(s)-\psi_{ht}(s))\ds \right\|_{L^2} \leq T\sup_{s\in (0,t)} \nLtwo{\tildePp\psi_t(s)-\psi_{ht}(s)},
	\]	
	which completes the proof.
\end{proof}

Note that in the special case $\sigma = k = 0$, we recover a bound for the strongly damped linear wave equation. Compared to the available results in the literature \cite[Theorem 2.1]{pani2001mixed}, we impose a more regular exact solution but also obtain error bounds in higher-order norms with respect to time.

\section{\emph{A priori} error analysis of the Kuznetsov equation in mixed form}\label{sec:nonlin_kuz}
We are now ready to analyze the semi-discrete Kuznetsov equation in the potential-velocity form \eqref{IBVP_approx_Kuznetsov}. Given $r \in \{2, \ldots, p^*\}$ and approximate initial data $(\psi_{0h}, \psi_{1h}) \in \spaceS \times \spaceS$, we introduce the ball
\begin{equation}
	\begin{aligned}
		\BK=\left\{\vphantom{\int_0^t}\right.&(\psi_h^*, \bv^*_h) \in H^2(0,T; \spaceS) \times C^1([0,T]; \spaceV):  \
		(\psi_h^*, \psi^*_{ht},\bv_h^*, \bv^*_{ht})\vert_{t=0} = (\psi_{0h}, \psi_{1h}, \bv_{0h},\bv_{1h}),
	\\& \sup_{t \in (0,T)} E[\psi-{\psi}^*_h, \bv-\bvh^*](t) \leq C_{\textup{nl}}^2 h^{2r} \|\psi\|^2_{X_{r+1}}	\left.\vphantom{\int_0^t}\right\},
	\end{aligned}
\end{equation}
where $C_{\textup{nl}}>0$ will be specified in the upcoming analysis. 
The proof follows by employing the Banach fixed-point theorem to the mapping 
\[\mathcal{F}: \BK \ni (\psi_h^*, \bv^*_h) \mapsto (\psi_h, \bv_h), \]
with $ (\psi_h, \bv_h)$ being the solution of the linear problem \eqref{weak_approx_linear}, where we choose \[\alpha_h=  \psi_{ht}^* \ \textrm{and } \boldsymbol{\beta}_h= \boldsymbol{v}_h^*.\] Recall that the solution space $X_{r+1}$ for the exact potential is defined in \eqref{solution_space}.

\begin{theorem} \label{Thm:Kuzn} Let $b>0$. Let $2 \leq r \leq p^*$ and 
	let $\psi \in X_{r+1}$ be the solution of the exact problem \eqref{eq:continuous_problem} with a sufficiently smooth source term $f$, and coupled with homogeneous Dirichlet data and suitable initial conditions $(\psi(0),\psi_t(0))=(\psi_0,\psi_1)$ and let $v = \nabla \psi$. 
	Furthermore, let the approximate initial data $(\psi_{0h}, \psi_{1h})$ be chosen as in Proposition~\ref{Prop:LinL2}. Then there exist \[\overline{h} = \overline{h}(\|\psi\|_{X_{r+1}})<1 \quad \text{and} \quad M = M(k,\sigma, T)>0,\] such that for $0<h<\overline h$
	and 
	\begin{equation} \label{Linf_smallness}
		\sup_{t \in (0,T)}\|\bv(t)\|^2_{L^{\infty}}+\sup_{t \in (0,T)}\|\pt(t)\|^2_{L^\infty} +\int_0^T \left( \|\ptt(s)\|_{L^\infty}^2 + \|\bvt(s)\|^2_{L^\infty} \right)\ds \leq M,
	\end{equation}
	there is a unique $(\psi_h, \bv_h)\in \BK$, which solves \eqref{IBVP_approx_Kuznetsov}. % with approximate initial conditions $(\psi_h, \psi_{ht})\vert_{t=0}=(\psi_{0h}, \psi_{1h})$.
\end{theorem}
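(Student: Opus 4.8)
The plan is to apply the Banach fixed-point theorem to $\mathcal{F}$ on the ball $\BK$, equipped with the metric
\[
d\bigl((\psi_h^1,\bv_h^1),(\psi_h^2,\bv_h^2)\bigr)=\sup_{t\in(0,T)}E[\psi_h^1-\psi_h^2,\bv_h^1-\bv_h^2](t)^{1/2}.
\]
With this metric $\BK$ is a complete metric space, and it is nonempty: for $C_{\textup{nl}}$ at least as large as the projection constant of Lemma~\ref{lem:interpolation_error}, the mixed projection $(\tildePp{\psi},\tildePv{\bv})$ lies in $\BK$, since it carries the prescribed initial data by \eqref{eq:init_propL} and meets the defining bound. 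The map $\mathcal{F}$ is well defined on $\BK$: by the discussion in Section~\ref{sec:lin}, for the chosen coefficients the linear problem \eqref{weak_approx_linear} has a unique solution in $H^2(0,T;\spaceS)\times H^2(0,T;\spaceV)\hookrightarrow H^2(0,T;\spaceS)\times C^1([0,T];\spaceV)$ with the correct initial values. It then suffices to show $\mathcal{F}(\BK)\subseteq\BK$ and that $\mathcal{F}$ is contractive, for a suitable choice of $C_{\textup{nl}}$, $M$, $\overline{h}$.

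\emph{Self-mapping.} Fix $(\psi_h^*,\bv_h^*)\in\BK$ and put $\alpha_h=\psi_{ht}^*$, $\boldsymbol{\beta}_h=\bv_h^*$. Assumption~\ref{Assumption_reg} holds with $C_*=C_{\textup{nl}}$ directly from the defining inequality of $\BK$, because $E$ dominates $\nLtwo{\psi_t-\psi_{ht}^*}^2$ and $\nLtwo{\bv-\bv_h^*}^2$ pointwise in time. The delicate input is the non-degeneracy Assumption~\ref{Assumption_nondeg}. Since $\psi_{ht}^*\in\spaceS\not\subset L^\infty(\Omega)$, I would split $\psi_{ht}^*=\psi_t-(\psi_t-\tildePp{\psi_t})-(\tildePp{\psi_t}-\psi_{ht}^*)$, bound the projection error $\psi_t-\tildePp{\psi_t}$ in $L^\infty(L^\infty)$ using Lemma~\ref{lem:interpolation_error} (with a logarithmic factor when $d=2$; for $d=3$ one routes through an $L^\infty$-stable projection and an inverse estimate), and bound the finite-element remainder by an inverse estimate,
\[
\nLinf{\tildePp{\psi_t}-\psi_{ht}^*}\lesssim h^{-d/2}\nLtwo{\tildePp{\psi_t}-\psi_{ht}^*}\lesssim h^{\,r-d/2}\|\psi\|_{X_{r+1}},
\]
which is small because $r\geq 2>d/2$ and $h<\overline{h}$. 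Together with $\nLinfLinf{\psi_t}\leq\sqrt{M}$ from \eqref{Linf_smallness}, this gives $\nLinfLinf{\psi_{ht}^*}\leq\sqrt{M}+Ch^{\,r-d/2}\|\psi\|_{X_{r+1}}$, so $1+2k\psi_{ht}^*$ is non-degenerate with $h$-independent bounds provided $M$ and $\overline{h}(\|\psi\|_{X_{r+1}})$ are small enough; the same decomposition of $\bv_h^*$ controls $\nLinfLinf{\boldsymbol{\beta}_h}$. With Assumptions~\ref{Assumption_nondeg}--\ref{Assumption_reg} satisfied, Proposition~\ref{Prop:LinL2} yields
\[
\sup_{t\in(0,T)}E[\psi-\psi_h,\bv-\bv_h](t)\leq C(T)\bigl(1+C_{\textup{nl}}^2(k^2\|\psi_{tt}\|^2_{L^2(L^\infty)}+\sigma^2\|\bvt\|^2_{L^2(L^\infty)})\bigr)h^{2r}\|\psi\|^2_{X_{r+1}},
\]
and, bounding the bracket by $C(T)(1+C_{\textup{nl}}^2\max(k^2,\sigma^2)M)$ via \eqref{Linf_smallness}, I would first fix $M=M(k,\sigma,T)$ small and then $C_{\textup{nl}}=C_{\textup{nl}}(T)$ (also exceeding the projection constant) so that the right-hand side is $\leq C_{\textup{nl}}^2h^{2r}\|\psi\|^2_{X_{r+1}}$; that is, $\mathcal{F}(\psi_h^*,\bv_h^*)\in\BK$.

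\emph{Contraction.} For $(\psi_h^{*,i},\bv_h^{*,i})\in\BK$, $i=1,2$, with images $(\psi_h^i,\bv_h^i)=\mathcal{F}(\psi_h^{*,i},\bv_h^{*,i})$, subtracting the two instances of \eqref{weak_approx_linear} shows that $(\delta\psi_h,\delta\bv_h):=(\psi_h^1-\psi_h^2,\bv_h^1-\bv_h^2)$ solves \eqref{weak_approx_linear} with coefficients $\alpha_h=\psi_{ht}^{*,1}$, $\boldsymbol{\beta}_h=\bv_h^{*,1}$ (which satisfy Assumption~\ref{Assumption_nondeg} by the previous step), zero initial data, and source $\tilde{f}=-2k(\psi_{ht}^{*,1}-\psi_{ht}^{*,2})\psi_{htt}^2-2\sigma(\bv_h^{*,1}-\bv_h^{*,2})\cdot\bv_{ht}^2$; hence Proposition~\ref{Prop:LinStability} gives $\sup_{t}E[\delta\psi_h,\delta\bv_h](t)\lesssim\|\tilde{f}\|^2_{L^2(L^2)}$. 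To estimate $\tilde{f}$, write $\psi_{htt}^2=\psi_{tt}+(\psi_{htt}^2-\psi_{tt})$ (and similarly for $\bv_{ht}^2$): the smooth part is paired with $\|\psi_{ht}^{*,1}-\psi_{ht}^{*,2}\|_{L^\infty(L^2)}$ and $\|\psi_{tt}\|_{L^2(L^\infty)}\leq\sqrt{M}$, while the discrete remainder — bounded by $C_{\textup{nl}}h^r\|\psi\|_{X_{r+1}}$ in $L^2(L^2)$ because $(\psi_h^2,\bv_h^2)\in\BK$ — is paired with the $L^\infty(L^\infty)$ norm of the finite-element coefficient difference, itself controlled by an inverse estimate. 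Since $\|\psi_{ht}^{*,1}-\psi_{ht}^{*,2}\|_{L^\infty(L^2)}$ and $\|\bv_h^{*,1}-\bv_h^{*,2}\|_{L^\infty(L^2)}$ are both bounded by $d\bigl((\psi_h^{*,1},\bv_h^{*,1}),(\psi_h^{*,2},\bv_h^{*,2})\bigr)$, this yields a Lipschitz constant of the form $C(T)\bigl((|k|+|\sigma|)\sqrt{M}+C_{\textup{nl}}h^{\,r-d/2}\|\psi\|_{X_{r+1}}\bigr)$ for $\mathcal{F}$ on $\BK$, which is made $\leq\tfrac12$ by further shrinking $M$ (depending on $k,\sigma,T$) and then $\overline{h}$ (depending on $\|\psi\|_{X_{r+1}}$, recalling $r-d/2>0$). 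Banach's theorem then produces a unique fixed point $(\psi_h,\bv_h)\in\BK$; at the fixed point $\alpha_h=\psi_{ht}$ and $\boldsymbol{\beta}_h=\bv_h$, so $(\psi_h,\bv_h)$ solves \eqref{IBVP_approx_Kuznetsov}.

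The step I expect to be the main obstacle is the verification of Assumption~\ref{Assumption_nondeg} together with the $L^\infty(L^\infty)$ bound on $\boldsymbol{\beta}_h$: one has to convert the merely $L^2$-type control available from membership in $\BK$ into a uniform $L^\infty$ bound via an inverse estimate whose $h^{-d/2}$ loss is outweighed by the $h^r$ approximation gain — which is precisely why $r\geq 2$ is imposed and why $\overline{h}$ must depend on $\|\psi\|_{X_{r+1}}$ — and then to order the choices $C_{\textup{nl}}\to M\to\overline{h}$ so that the self-mapping smallness, the contraction smallness, and the non-degeneracy threshold all hold simultaneously and without circularity. A secondary technicality is the $L^\infty$ projection estimate for $d=3$, where Lemma~\ref{lem:interpolation_error} supplies only $L^q$ bounds and one must detour through an $L^\infty$-stable local projection followed by an inverse estimate.
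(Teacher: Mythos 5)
Your proposal is correct and follows essentially the same route as the paper: a Banach fixed-point argument on $\BK$ with the same map $\mathcal{F}$, non-degeneracy of $1+2k\psi_{ht}^*$ and boundedness of $\bv_h^*$ obtained by combining the ball bound, an approximation estimate, and an inverse inequality costing $h^{-d/2}$ (absorbed since $r\geq 2>d/2$), self-mapping via Proposition~\ref{Prop:LinL2}, and contraction via the difference equation with source $-2k\opsi^*_{ht}\psi^{(2)}_{htt}-2\sigma\obvh^*\cdot\bvht^{(2)}$ and the stability estimate. The one point where the paper is cleaner is the $L^\infty$ verification you flag as delicate: instead of the mixed projection $\tildePp{\,\cdot\,}$ (whose $L^\infty$ bound is unavailable for $d=3$ in Lemma~\ref{lem:interpolation_error}), the paper inserts the nodal interpolant $I_h\psi_t$, keeps $\|I_h\psi_t\|_{L^\infty(L^\infty)}$ directly (using $H^r\hookrightarrow C(\overline\Omega)$) and applies the inverse estimate only to $\|\psi_{ht}^*-I_h\psi_t\|_{L^\infty(L^2)}$, which removes the need for the $L^\infty$-stable-projection detour you sketch.
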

Before going into the proof, we will need to define an appropriate interpolation operator on $\spaceS$. Notice that due to the condition $ p^*\geq 2$, the space $\spaceS$ contains the nodal interpolants, which we denote hereafter by $I_h$. These are suitable, since the quantities concerned ($\psi$, $\psi_t$, $\psi_{tt}$) are in $H^r(\Om)$ with $r\geq2$ and we can thus rely on the embedding $H^r(\Om) \hookrightarrow C(\Om)$; see~\cite[Theorem 12]{burenkov1998sobolev}. 
%Should the functions to be interpolated be nonsmooth, then one can use the Scott-Zhang interpolation operator instead~\cite{scott1990finite}. If the polynomial degree is too low to contain the nodal interpolants, then one can construct interpolation operators along the lines of~\cite[Section 1.4.3]{ern2004theory}. \vanja{I am not sure that we need these last two sentences since we do not have low regular exact solution}
\color{black}
\begin{proof}
	To be able to employ Banach's fixed-point theorem on $\mathcal{F}$, we first check that the assumptions of Proposition~\ref{Prop:LinStability} hold. Using the properties of the interpolation operator, we find that
	\begin{equation}
		\begin{aligned}
			\nLinfLinf{\alpha_h} \leq&\, \|\psi_{ht}^*-I_h \psi_t\|_{L^\infty(L^\infty)}+ \|I_h \psi_t\|_{L^\infty(L^\infty)}
			\\
			\leq&\, \overline{h}^{-d/2} \|\psi_{ht}^*-I_h \psi_t\|_{L^\infty(L^2)}+\|I_h \psi_t\|_{L^\infty(L^\infty)}\\
			\leq&\, \overline{h}^{-d/2} \|\psi_{ht}^*-\psi_t\|_{L^\infty(L^2)}+ \overline{h}^{-d/2} \|\psi_{t}-I_h \psi_t\|_{L^\infty(L^2)} +\|I_h \psi_t\|_{L^\infty(L^\infty)}.
		\end{aligned}
	\end{equation}
	Thus, since $(\psi_h^*, \bvh^*) \in \BK$, we can guarantee that
	\begin{equation}\label{eq:control_bound_discussion}
		\begin{aligned}
			\ulal=\olal = & C \left((1+C_\textup{nl})\overline{h}^{r-d/2}  \|\psi\|_{X_{r+1}} + M^{1/2}\right) \in (0,1) 
		\end{aligned}
	\end{equation}
	for sufficiently small $M$ and $\overline h$, with $C$ being independent of $h$ and $M$. We can also guarantee uniform boundedness of $\boldsymbol{\beta}_h$ since
	\begin{equation}
		\begin{aligned}
			\|\boldsymbol{\beta}_h\|_{L^\infty(L^\infty)} \leq&\,  \|\bv_{h}^*-\boldsymbol{I}_h \bv\|_{L^\infty(L^\infty)}+\|\boldsymbol{I}_h \bv\|_{L^\infty(L^\infty)}\\
			\leq&\,  \overline{h}^{-d/2} \|\bv_{h}^*-\boldsymbol{I}_h \bv\|_{L^\infty(L^2)}+\|\boldsymbol{I}_h \bv\|_{L^\infty(L^\infty)}.
		\end{aligned}
	\end{equation}
	Thus, the set $\BK$ is non-empty as the solution of the linear problem belongs to it, provided
	\begin{equation} \label{error_fixed_point_iteration}
		\sup_{t \in (0,T)} E[\psi-{\psi}_h, \bv-\bvh](t) \leq C_{\textup{nl}}^2 h^{2r} \|\psi\|^2_{X_{r+1}},
	\end{equation}
	which holds as long as 
	\[
	C(T) \left(1+C_\textup{nl}^2 (k^2 \|\psi_{tt}\|_{L^2(0,T; L^\infty)}^2+ \sigma^2 \|\bvt\|_{L^2(0,T; L^\infty)}^2)\right) \leq 
	C(T) \left(1+C_\textup{nl}^2 \left(k^2 M+ \sigma^2 M \right)\right) \leq  C_\textup{nl}^2.
	\]
	This can be ensured by choosing%\vanja{Why does one need small $T$ \Mos{if $T$ is small, $M$ can be allowed to be larger, if $T \to \infty$ then $M\to 0$. One needs that the combination $M, T$ works well}}
	\[ C_\textup{nl}^2 \geq \frac1{\frac1{ C(T)}-k^2 M - \sigma^2 M}\]
	with $M$ small enough so that the denominator is positive. The mapping $\mathcal{F}$ is then well-defined and, on account of estimate \eqref{error_fixed_point_iteration}, $\mathcal{F}(\BK) \subset \BK$. \\
	\indent We next prove strict contractivity of $\mathcal{F}$. Let $(\psi_h^{*(1)}, \bvh^{*(1)})$, $(\psi_h^{*(2)}, \bvh^{*(2)}) \in \BK$. Denote 
	\[
	\begin{aligned}
		(\psi^{(1)}_h, \bvh^{(1)})=&\, \mathcal{F}(\psi_h^{*(1)}, \bvh^{*(1)}), \quad (\psi^{(2)}_h, \bvh^{(2)})=\mathcal{F}(\psi_h^{*(2)}, \bvh^{*(2)}),
	\end{aligned}
	\]
	and the differences
	\[
	\begin{aligned}
		\opsi_h^*=&\,\psi_h^{*(1)}-\psi_h^{*(2)},\quad \obvh^*=\bvh^{*(1)}- \bvh^{*(2)}, \\
		\opsi_h=&\, \psi_h^{(1)}-\psi_h^{(2)},\quad  \obvh=\bvh^{(1)}- \bvh^{(2)}.
	\end{aligned}
	\]
	Then $(\opsi_h, \obvh)$ solves the problem
	\begin{equation} 
		\left \{\begin{aligned} \label{eq:difference_eq}
			& \begin{multlined}[t]((1+2k \psi_{ht}^{*(1)})\opsi_{htt}, \phi_h)_{L^2}-c^2 (\nabla \cdot (\obvh+\tfrac{b}{c^2}\obvht), \phi_h)_{L^2}\\+2 \sigma(\bvh^{*(1)} \cdot \obvht, \phi_h)_{L^2}= (\tilde{f}, \phi)_{L^2},\end{multlined}\\[2mm]
			& (\obvh, \boldsymbol{w}_h)_{L^2}+(\opsi_h, \nabla \cdot \boldsymbol{w}_h)_{L^2}=0
		\end{aligned} \right.
	\end{equation}
	for all $(\phi_h, \bw) \in \spaceS\times \spaceV$,
	with the right-hand side 
	\begin{align}\label{eq:source_diff}
	\tilde{f}=-2k \opsi^*_{ht}\psi^{(2)}_{htt}-2\sigma \obvh^* \cdot \bvht^{(2)},
	\end{align}
	and zero initial data. The stability analysis yields
	\begin{equation} \label{energy_error2}
		\begin{aligned}
			E[\opsi_h, \obvh](t) \lesssim \int_0^t \nLtwo{\tilde{f}(s)}^2\ds, \quad t \geq 0.
		\end{aligned}
	\end{equation}
	We can then estimate $\tilde{f}$ as follows:
	\begin{align}
		\intT \|\tilde{f}\|_{L^2}^2 \dt & \leq 2 |k| \|\psi^{(2)}_{htt}\|_{L^2(L^\infty)} \|\opsi^*_{ht}\|_{L^\infty(L^2)} 
		+ 2 |\sigma| \|\bvht^{(2)}\|_{L^2(L^\infty)} \|\obvh^*\|_{L^\infty(L^2)},
		\intertext{from which we infer}
		\intT \|\tilde{f}\|_{L^2}^2 \dt & \leq 4 \left(|k|\intT \|\psi^{(2)}_{htt}\|_{L^\infty}^2 \dt + |\sigma|\intT \|\bvht^{(2)}\|_{L^\infty}^2\dt \right) \sup_{\tau \in (0,t)} E[\opsi^*_h, \obvh^*](\tau).
	\end{align}
	
	We can estimate the terms within the brackets uniformly with respect to $h$. Indeed using the following uniform bounds:
	\begin{align}\label{eq:error_est_fixed}
		\begin{split}
			\|\psi^{(2)}_{htt}\|_{L^2(L^\infty)} \lesssim&\,  \overline{h}^{-d/2} \|\psi_{htt}^{(2)}-I_h \psi_{tt}\|_{L^2(L^2)}+\|I_h \psi_{tt}\|_{L^2(L^\infty)}\\
			\lesssim &\,  \overline{h}^{-d/2} \|\psi_{htt}^{(2)}- \psi_{tt}\|_{L^2(L^2)} +  \overline{h}^{-d/2} \|\psi_{tt}-I_h \psi_{tt}\|_{L^2(L^2)}+\|I_h \psi_{tt}\|_{L^2(L^\infty)}\\
			\lesssim &\, \overline{h}^{r-d/2} \|\psi\|_{X_{r+1}} + M^{1/2},
		\end{split}
	\end{align}
and  
	\begin{align}\label{eq:error_est_fixed2}
		\|\bvht^{(2)}\|_{L^2(L^\infty)} \lesssim&\,   \overline{h}^{-d/2} \|\bv_{ht}^{(2)}-\boldsymbol{I}_h \bvt\|_{L^2(L^2)}
		+\|\boldsymbol{I}_h \bvt\|_{L^2(L^\infty)}
		\\
		\lesssim&\,\overline{h}^{r-d/2} \|\psi\|_{X_{r+1}} + M^{1/2},
	\end{align}
	we obtain strict contractivity by additionally reducing $M>0$ and $\overline{h}>0$.
\end{proof}
\begin{comment}
	\begin{remark}\label{rm:BDM2}
		If one is able to show what was pointed out in Remark~\ref{rm:BDM}, one would be able to show that the better interpolation error of $\bv$ in the case of BDM elements can be retrieved by redefining the ball $\mathcal{B}$ and using the same fixed-point argument. 
	\end{remark}
\end{comment}
We note that the smallness condition \eqref{Linf_smallness} on the potential can be guaranteed via the Sobolev embedding through the smallness of its $\|\cdot\|_{X_{r+1}}$ norm, which in turn can be ensured through sufficiently  small data in a suitable topology; see \cite{kawashima1992global, mizohata1993global} for the well-posedness analysis. However, it might be possible to impose the smallness in a lower-order topology than that dictated by the solution space by using a suitable interpolation inequality (such as Agmon's inequality~\cite[Ch.\ 13]{agmon1959estimates}) instead of an embedding in the course of the well-posedness analysis. For such approaches in the analysis of nonlinear acoustic equations, see~\cite{bongarti2021vanishing, kaltenbacher2022parabolic}. Thus \eqref{Linf_smallness} may be a more realistic theoretical constraint than imposing smallness of $\|\psi \|_{X_{r+1}}$, as the exact ultrasound data is in practice often smooth but not necessarily small in higher-order norms; see, e.g.,~\cite{nikolic2019priori, kaltenbacher2007numerical}.

\subsection{\emph{A priori} $L^q$ error estimates for the Kuznetsov equation}
In this section, following the general approach of~\cite{pani2001mixed}, we derive error estimates in the $L^q(\Omega)$ norm for the involved scalar quantities, where $q$ depends on the spatial dimension. If we restrict ourselves to a domain in $\R^2$, then we also have an \emph{a priori} error estimate in the maximum error norm. 

\begin{theorem}\label{prop:Linf}
	Let the assumptions of Theorem~\ref{Thm:Kuzn} hold and let $(\psi_h, \bv_h) \in \BK$ be the unique couple which solves \eqref{IBVP_approx_Kuznetsov}. Then the following estimate holds for all $t \in [0,T]$:
	\begin{equation} \label{W1q_bound}
		\begin{aligned}
			&\|\psi(t)-\ph(t)\|_{L^q}  + \|\psi_t(t)-\psi_{ht}(t)\|_{L^q} \lesssim \,  h^r \|\psi\|_{X_{r+1}} \quad \textrm{for }\ \left\{
			\begin{aligned}
				&{1 \leq q \leq 6 \ \ \, \textrm{if } \quad d = 3},\\
				&{1 \leq q < \infty \ \textrm{if } \quad d = 2}. %< \infty
			\end{aligned}
			\right.
		\end{aligned}
	\end{equation}
If $d=2$, then
\begin{equation} \label{W1inf_bound}
	\begin{aligned}
		&\|\psi(t)-\ph(t)\|_{L^\infty}  + \|\psi_t(t)-\psi_{ht}(t)\|_{L^\infty} \lesssim \,  h^r \log\frac1{h} \|\psi\|_{X_{r+1}}.
	\end{aligned}
\end{equation}
\end{theorem}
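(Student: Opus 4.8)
The plan is to combine the error splitting $\psi-\ph=\tilde e_\psi+\tilde e_{\psi h}$ of \eqref{split_error} with the two ingredients already prepared in Section~2: the projection part $\tilde e_\psi=\psi-\tildePp{\psi}$ is bounded directly by the $L^q$ (respectively $L^\infty$ when $d=2$) mixed-projection estimates of Lemma~\ref{lem:interpolation_error}, while the fully discrete remainder $\tilde e_{\psi h}=\tildePp{\psi}-\ph\in\spaceS$ is bounded by the duality estimate of Lemma~\ref{lem:elliptic_bounds}, which trades an $L^q$ norm of a discrete scalar for the $L^2$ norm of the accompanying vector field. The key observation is that this accompanying vector field is already controlled at the optimal rate by the $L^2$ energy estimate furnished by Theorem~\ref{Thm:Kuzn}.

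First I would subtract the discrete identity $(\bvh,\bwh)_{L^2}+(\ph,\div\bwh)_{L^2}=0$ from the weak form $(\bv,\bwh)_{L^2}+(\psi,\div\bwh)_{L^2}=0$ of $\bv=\nabla\psi$ tested against $\bwh\in\spaceV\subset\Hdiv$, and add the second line of the mixed projection \eqref{eq:mixed_proj}; this is the same manipulation that yields the second line of \eqref{linear_error_proj_eq} and gives here
\[
	(\tilde{\boldsymbol{e}}_{\boldsymbol{v}h}, \bwh)_{L^2} + (\tilde{e}_{\psi h}, \div \bwh)_{L^2} = 0 \qquad \text{for all } \bwh \in \spaceV .
\]
Lemma~\ref{lem:elliptic_bounds} with $\bw=\tilde{\boldsymbol{e}}_{\boldsymbol{v}h}$ and $\phi_h=\tilde e_{\psi h}$ then yields $\|\tilde e_{\psi h}\|_{L^q}\lesssim\nLtwo{\tilde{\boldsymbol{e}}_{\boldsymbol{v}h}}$ in the admissible range of $q$, and $\|\tilde e_{\psi h}\|_{L^\infty}\lesssim\log\tfrac1h\,\nLtwo{\tilde{\boldsymbol{e}}_{\boldsymbol{v}h}}$ when $d=2$. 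It remains to bound $\nLtwo{\tilde{\boldsymbol{e}}_{\boldsymbol{v}h}}\leq\nLtwo{\bv-\tildePv{\bv}}+\nLtwo{\bv-\bvh}$, the first term by Lemma~\ref{lem:interpolation_error} and the second by $E[\psi-\ph,\bv-\bvh](t)^{1/2}\lesssim h^r\|\psi\|_{X_{r+1}}$, which holds since $(\ph,\bvh)\in\BK$ by Theorem~\ref{Thm:Kuzn}. Adding the direct bound $\|\tilde e_\psi\|_{L^q}\lesssim h^r\|\psi\|_{H^{r+1}}$ of Lemma~\ref{lem:interpolation_error} gives the claimed estimate on $\|\psi(t)-\ph(t)\|_{L^q}$ and, for $d=2$, on $\|\psi(t)-\ph(t)\|_{L^\infty}$ with the extra $\log\tfrac1h$.

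For the $\psi_t$ contribution I would use that the mixed projection is linear and defined through time-independent bilinear forms, hence commutes with $\partial_t$: $\partial_t\tildePp{\psi}=\tildePp{\psi_t}$ and $\partial_t\tildePv{\bv}=\tildePv{\bv_t}$, which is legitimate since $\psi\in X_{r+1}$ provides enough time regularity. Differentiating the second equation of \eqref{IBVP_approx_Kuznetsov} in time and repeating the previous step verbatim with $(\tilde e_{\psi h t},\tilde{\boldsymbol{e}}_{\boldsymbol{v}ht})$ in place of $(\tilde e_{\psi h},\tilde{\boldsymbol{e}}_{\boldsymbol{v}h})$ — now using $\nLtwo{\tilde{\boldsymbol{e}}_{\boldsymbol{v}ht}}\leq\nLtwo{\bv_t-\tildePv{\bv_t}}+\nLtwo{\bv_t-\bvht}$, Lemma~\ref{lem:interpolation_error} applied to $\psi_t$, and $\nLtwo{\bv_t-\bvht}^2\leq E[\psi-\ph,\bv-\bvh](t)$ — gives $\|\psi_t(t)-\psi_{ht}(t)\|_{L^q}\lesssim h^r\|\psi\|_{X_{r+1}}$ together with the corresponding $L^\infty$ bound for $d=2$. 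Summing the two contributions proves \eqref{W1q_bound} and \eqref{W1inf_bound}.

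The only genuinely delicate point is the regularity bookkeeping: the $L^q$ and $L^\infty$ mixed-projection estimates of Lemma~\ref{lem:interpolation_error} cost one extra spatial derivative compared with the plain $L^2$ estimate, so one must check that $X_{r+1}$ really supplies $\psi(t),\psi_t(t)\in H^{r+1}(\Omega)$ uniformly in $t$ (it does, by definition \eqref{solution_space}, for every admissible $r\le p^*\le p+1$), and one must justify the commutation $\partial_t\circ\tilde P_h=\tilde P_h\circ\partial_t$ at the available regularity. Everything else is a straightforward assembly of Lemmas~\ref{lem:elliptic_bounds}--\ref{lem:interpolation_error} and Theorem~\ref{Thm:Kuzn}; no new energy argument is needed.
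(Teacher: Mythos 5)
Your proposal is correct and follows essentially the same route as the paper: split the error via the mixed projection \eqref{split_error}, convert the $L^q$ (or $L^\infty$) norm of the scalar error into the $L^2$ norm of the velocity error through Lemma~\ref{lem:elliptic_bounds} applied to the second equation of the error system, and then bound that velocity error by the projection estimates of Lemma~\ref{lem:interpolation_error} together with the energy bound furnished by membership in $\BK$. The only (harmless) difference is bookkeeping: you apply Lemma~\ref{lem:elliptic_bounds} to the discrete part $\tilde e_{\psi h}\in\spaceS$ and treat $\tilde e_\psi$ separately via the $L^q$ projection bound, whereas the paper states the estimate directly for $e_\psi$ in terms of $\nLtwo{\boldsymbol{e}_{\bv}}$ — your version is, if anything, the more scrupulous reading of the lemma's hypothesis that the scalar argument lie in $\spaceS$.
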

\begin{proof}
	Let $\psi$ be the solution of the Kuznetsov equation \eqref{eq:continuous_mixed_problem} coupled with appropriate boundary and initial conditions and let $\bv=\nabla \psi$. 
	\begin{comment}
		We again rely on the quantities $e_{\psi}$
		\[
		\begin{aligned}
			& e_{\psi}=\psi-\ph=(\psi-\tildePp{\psi})+(\tildePp{\psi} -\psi_h):= \tilde{e}_{\psi}+\tilde{e}_{\psi h},\\
			&  \boldsymbol{e}_{\bv}= (\bv-\tildePv{\bv})+(\tildePv{\bv}-\bvh):=\tilde{\boldsymbol{e}}_{\bv}+\tilde{\boldsymbol{e}}_{\boldsymbol{v}h}.
		\end{aligned}
		\]
		where $(\tildePp{\psi}, \tildePv{\bv}) \in \spaceS \times \spaceV$ is the elliptic projection of the pair $(\psi, \bv)$, as before.
	\end{comment}
	We again use the splitting of the error \eqref{split_error}, where $(\psi_h, \bv_h)$ solves the approximate nonlinear problem. Then $(\tilde{e}_{\psi h}, \tilde{\boldsymbol{e}}_{\boldsymbol{v}h})$ satisfies
	\begin{equation} \label{nonlinear_error_eq}
		\left \{\begin{aligned}
			& \begin{multlined}[t]((1+2k\psi_{ht})\tilde{e}_{\psi h tt}, \phi_h)_{L^2}-c^2 (\nabla \cdot (\tilde{\boldsymbol{e}}_{\boldsymbol{v}h}
				+\tfrac{b}{c^2}\tilde{\boldsymbol{e}}_{\boldsymbol{v}ht}), \phi_h)_{L^2} \\
				=- ((1+2k\psi_{ht})\tilde{e}_{\psi tt}, \phi_h)_{L^2} + 2k(e_{\psi t}\psi_{tt}, \phi_h)_{L^2}
				+ 2\sigma(\boldsymbol{e}_{\boldsymbol{v}} \cdot \bvt, \phi_h)_{L^2}
				\\+ 2 \sigma (\bvh \cdot \boldsymbol{e}_{\boldsymbol{v}t}, \phi_h)_{L^2},
			\end{multlined} 
			\\[2mm]
			& (\boldsymbol{e}_{\boldsymbol{v}}, \boldsymbol{w}_h)_{L^2}+(e_{\psi}, \nabla \cdot \boldsymbol{w}_h)_{L^2}= 0
		\end{aligned} \right.
	\end{equation} 
	a.e.\ in time. Since $(\psi_h,\bvh) \in \BK$, we know that 
	\[E[\tilde{e}_{\psi h}, \tilde{\boldsymbol{e}}_{\boldsymbol{v}h}](t) \leq C h^{2r} \|\psi\|^2_{X_{r+1}}.\]
	We  use Lemma~\ref{lem:elliptic_bounds} with the second equation in \eqref{nonlinear_error_eq} to estimate 
	\begin{align} 
		{\|e_{\psi}(t)\|_{L^q}} \lesssim\; & \nLtwo{\boldsymbol{e}_{\boldsymbol{v}}(t)}\\
		\lesssim\; &  \nLtwo{\tilde{\boldsymbol{e}}_{\boldsymbol{v}}(t)} + \nLtwo{\tilde{\boldsymbol{e}}_{\boldsymbol{v} h}(t)};
	\end{align}
	{for appropriate ranges of $q$ (as given in the statement of the theorem)}. Further, using approximation properties \eqref{mixed_projection} of the mixed projection, we can estimate 
	\[\nLtwo{\tilde{\boldsymbol{e}}_{\boldsymbol{v}}(t)} \lesssim h^r \|\psi(t)\|_{H^{r+1}},
	\]
	while 
	\[\nLtwo{\tilde{\boldsymbol{e}}_{\boldsymbol{v} h}(t)} \leq  \sqrt{E[\tilde{e}_{\psi h}, \tilde{\boldsymbol{e}}_{\boldsymbol{v}h}](t)} \leq C^{1/2} h^r \|\psi\|_{X_{r+1}}. \]
	We can reason the same way to additionally estimate {$\|e_{\psi t}(t)\|_{L^q}$} and arrive at bound  \eqref{W1q_bound}.  The proof of \eqref{W1inf_bound} follows along similar lines, so we omit the details here.
\end{proof}

\begin{remark}[Analysis of the strongly damped Westervelt equation in mixed form]
	By setting $\sigma = 0$ and adjusting the coefficient $k$, from Theorems~\ref{Thm:Kuzn} and~\ref{prop:Linf}, we directly obtain \emph{a priori} estimates for the damped Westervelt equation in potential-velocity form:
	\begin{equation}
		\left \{ \begin{aligned}
			&(1+2\tilde{k} \psi_t)\psi_{tt}-c^2 \nabla \cdot \bv -b \, \nabla \cdot \bvt=f,\\[2mm]
			& \boldsymbol{v}= \nabla \psi,
		\end{aligned}\right.
	\end{equation} 
	with $\tilde{k}= k+1/c^2.$ Results for the strongly damped linear wave equation follow by additionally setting $\tilde{k}=0$. 
\end{remark}

%\section{\emph{A priori} error analysis of the inviscid Westervelt equation}\label{sec:inv_West}
\section{Uniform discretization of the Westervelt equation in mixed form}\label{sec:inv_West}
The bounds established above are not robust with respect to the sound diffusivity $b$; that is, the hidden constant tends to $+\infty$ as $b \rightarrow 0^+$. In this section, we investigate the conditions under which the $L^2$ bounds can be made robust with respect to $b$ in the case  of the Westervelt equation ($\sigma=0$):
\begin{equation}
	\left \{ \begin{aligned}
		&(1+2\tilde{k} \psi_t)\psi_{tt}-c^2 \nabla \cdot \bv {- b \nabla \cdot \bvt}  =f,\\[2mm]
		& \boldsymbol{v}= \nabla \psi.
	\end{aligned}\right.
\end{equation}
\indent We refer to~\cite{kaltenbacher2022parabolic} for the uniform well-posedness analysis of the damped Westervelt equation for sufficiently smooth and small data and short enough final time. Analogously to before, the linearized semi-discrete problem is  to find $(\ph, \bvh): [0,T] \mapsto \spaceS \times \spaceV$, such that
	\begin{equation} \label{weak_approx_linear_inviscid}
		\left \{\begin{aligned}
			& \begin{multlined}[t]((1+2\tilde{k}\alpha_h(x,t))\psi_{htt}, \phi_h)_{L^2}- (c^2 \nabla \cdot \bvh {+ b \nabla \cdot \bvht}, \phi_h)_{L^2}=(f, \phi_h)_{L^2},\end{multlined}\\[2mm]
			& (\bvh, \boldsymbol{w}_h)_{L^2}+(\psi_h, \nabla \cdot \boldsymbol{w}_h)_{L^2}= 0,
		\end{aligned} \right.
	\end{equation} 
	for all $(\phi_h, \bwh) \in \spaceS \times \spaceV$ a.e. in time, supplemented by approximate initial conditions \eqref{approx_initcond}. Provided Assumption~\ref{Assumption_nondeg} on $\alpha_h$ holds and  $f \in L^2(0,T; L^2)$, the solution $(\ph, \bv_{h})$ of the above linearized problem can be shown to satisfy the following estimate for all $t \in [0,T]$:
\begin{equation}\label{eq:Wes_contractivity}
	\begin{aligned}
		E^{\textup{low}}_{\textup{W}}[\psi_h, \bvh](t) := \; & \nLtwo{\psi_{h}(t)}^2+ \nLtwo{\psi_{ht}(t)}^2 + \nLtwo{\bvh(t)}^2 {+ b \intT \nLtwo{\bvht(s)}^2 \ds}  \\ \lesssim \; & E^{\textup{low}}_{\textup{W}}[\psi_h, \bvh](0) + \int_0^t \|f(s)\|^2_{L^2}\ds.
	\end{aligned}
\end{equation}
{where the hidden constant does not depend on $b$.}
This estimate is obtained in a straightforward way using the testing 
\begin{equation}
	\left\{ \begin{aligned}
		& ((1+2\tilde{k}\alpha_h(x,t))\psi_{htt}, \psi_{ht})_{L^2}-(c^2 \nabla \cdot \bvh + {b \nabla \cdot \bvht}, \psi_{ht})_{L^2}%+2 \sigma\prodLtwo{\boldsymbol{\beta}_h(x,t) \cdot \bvht}{\psi_{ht}}
		=(f, \psi_{ht})_{L^2},\\
		& (\bvht, {c^2 \bvh + b \bvht})_{L^2}+(\psi_{ht}, \nabla \cdot  ({c^2 \bvh + b \bvht}))_{L^2}= 0.
	\end{aligned}
	\right.
\end{equation}
%We sum up the two equations and use that for all $\varepsilon>0$
%\begin{align}
%	\intT(f, \psi_{ht})_{L^2} \ds 
%	& \leq  \nLoneLtwo{f} \nLinfLtwot{\psi_{ht}}\leq \frac1{4\varepsilon} \nLoneLtwo{f}^2 + \varepsilon \nLinfLtwot{\psi_{ht}}^2.
%\end{align}

However, \eqref{eq:Wes_contractivity} would not allow us to infer a stability bound for the nonlinear problem as we need to be able to control  $\psi_{htt}$ in the fixed-point argument (similarly to the needed bound on \eqref{eq:source_diff} for the  damped Kuznetsov equation). To this end, the idea in the uniform analysis is to consider the time-differentiated first equation in  \eqref{weak_approx_linear_inviscid} and the twice time-differentiated second equation with the following testing strategy:
\begin{equation}
	\left \{\begin{aligned}
		& \begin{multlined}[t]((1+2\tilde{k}\alpha_h(x,t))\psi_{httt}, \psi_{htt})_{L^2}-(c^2 \nabla \cdot \bvht + {b \nabla \cdot \bv_{htt}}, \psi_{ht})_{L^2}\\=(f_t,  \psi_{htt})_{L^2} -2\tilde{k} (\alpha_{ht}\psi_{htt},  \psi_{htt})_{L^2}, \end{multlined}\\[2mm]
		&  \begin{multlined}[t](\bv_{htt}, {c^2 \bvht + b \bv_{htt}})_{L^2}+(\psi_{htt}, \nabla \cdot  ({c^2 \bvht + b \bv_{htt}}))_{L^2}= 0.\end{multlined}\\[2mm]
	\end{aligned} \right.
\end{equation}
%assuming that $f_t \in L^1(0,T;L^2)$ and $\nLinfLinf{\alpha_{ht}}$ is bounded uniformly in $h$. 
This strategy requires the following stronger assumptions on the involved coefficient and source term.
\begin{assumptionp}{W1}\label{Assumption_inv}
	We assume that the coefficient $\alpha_h \in {H^1}(0,T; \spaceS)$ is non-degenerate; that is, there exist $\ulal$, $\olal>0$, independent of $h$, such that
	\[
	0 < 1-2|\tilde{k}|\ulal \leq 1+2\tilde{k}\alpha_h(x,t) \leq 1+2|\tilde{k}|\olal, \quad (x,t) \in \Om \times (0,T).
	\]
Furthermore, there exists $\check\alpha>0$, independent of $h$, such that 
	$$\nLtwoLinf{\alpha_{ht}}\leq \check\alpha.$$
Additionally, we assume that $f \in W^{1,1}(0,T; L^2(\Omega))$.
\end{assumptionp}
Under these assumptions, there exists a unique $(\ph, \bvh) \in W^{3,1}(0,T; \spaceS) \times W^{3,1}(0,T; \spaceV)$ which solves the linearized semi-discrete problem. The stability estimate is then given by
\begin{equation} 
	\begin{aligned}
		&\begin{multlined}[t] 
			E_{\textup{W}}[\psi_h, \bvh](t) := 
			\nLtwo{\psi_{h}(t)}^2 +
			\nLtwo{\psi_{ht}(t)}^2 + \nLtwo{\psi_{htt}(t)}^2 + \nLtwo{\bvh(t)}^2 +  \nLtwo{\bvht(t)}^2 \\ + b \intT \nLtwo{\bv_{ht}(s)}^2 \ds+ b \intT \nLtwo{\bv_{htt}(s)}^2 \ds
			\lesssim\,
				E_{\textup{W}}[\psi_h, \bvh](0) +  \nLoneLtwot{f}^2 + \nLoneLtwot{f_t}^2, \end{multlined}
	\end{aligned}
\end{equation}
where the hidden constant does not depend on $b$. To evaluate the energy at initial time we use \eqref{eq:init_data_mix} to compute $\bvh(0)$ and $\bvht(0)$. Additionally thanks to the smoothness in time of $\psi_{h}$, $\bvh$, and $\alpha_{h}$ we can estimate: 
\[
\nLtwo{\psi_{htt}(0)}^2 \lesssim c^2 \nLtwo{\nabla \cdot \Big(\bvh(0) + \frac{b}{c^2} \bvht(0)\Big) }^2 + \nLtwo{f(0)}^2,
\]
where the hidden constant does not depend on $b$. Observe that for our choice of the approximate initial data \begin{equation}\label{eq:init_eq_proj}
	\begin{aligned}
		(\psi_{0h}, \bv_{0h})=(\tildePp{\psi_0}, \tildePv{\bv_0}), \\
		(\psi_{1h}, \bv_{1h})=(\tildePp{\psi_1}, \tildePv{\bv_1}),
	\end{aligned} 
\end{equation}
we have
\[\nLtwo{\nabla \cdot \Big(\bvh(0) + \frac{b}{c^2} \bvht(0)\Big) } \leq \nLtwo{\nabla \cdot \Big(\bv(0) + \frac{b}{c^2} \bvt(0)\Big) }.\] Therefore, this term is uniformly bounded in $h$ and so is $\nLtwo{\psi_{htt}(0)}$.
Furthermore, note that due to the embedding $W^{1,1}(0,T)\hookrightarrow C[0,T]$, the term $\nLtwo{f(0)}$ can be estimated by $ \nLoneLtwot{f} + \nLoneLtwot{f_t}$.

\begin{comment}
\begin{remark}
	In view of the presence of the terms $c^2 \nLtwo{\nabla \cdot \Big(\bvh(0) + \frac{b}{c^2} \bvht(0)\Big) }^2$ on the right-hand-side above, it is possible to obtain a higher-regularity stability result. This can be achieved by testing \eqref{weak_approx_linear_inviscid} by $\frac{1}{1+2\tilde{k}\alpha_h(x,t)}\nabla \cdot (\bvht + \tfrac{b}{c^2}\bv_{htt})$. 
\end{remark}
\end{comment}

As will become apparent in the proof of the upcoming Proposition~\ref{Prop:LininvL2}, we need the exact solution $\psi$ to belong to the following space: %\vanja{Do we need $\psi_{ttt}$ to be zero on the boundary \Mos{Maybe not, bu wouldn't it be a consequence of $\psi, \psi_t$ being 0 at the boundary? \Vanja{In the continuous setting? If when expressing $\psi_{ttt}= ...$ all terms on the right are zero on the boundary, then yes. But the way we define the solution space, $\Delta \psi_t$ does not have to be zero on $\partial \Omega$, for example}}}
\begin{equation} \label{solution_space2}
	\begin{aligned}
		\tilde X_{r+1} = \left\{ \psi \in \right.&L^{\infty}(0,T; H_0^1(\Omega) \cap H^{r+1}(\Omega)) \,  | \left. \, \pt \in L^{\infty}(0,T; H_0^1(\Omega) \cap H^{r+1}(\Omega)),\right. \\ &\,\left. \ptt \in L^{2}(0,T;  H^{r}(\Omega)),\; \psi_{ttt} \in L^{2}(0,T;  H^{r}(\Omega)) \right\}.
	\end{aligned}
\end{equation}
Compared to the exact solution space \eqref{solution_space} in the non-uniform analysis, here we impose higher regularity on the exact solution (but will also obtain a higher-order-in time error bound). In particular, we additionally require a regularity condition on $\psi_{ttt}$ because we now have to work with the time-differentiated semi-discrete equation. With this, we also need the following alternative assumption on the error of approximation of the variable coefficient.
\begin{assumptionp}{W2}\label{Wes_reg} 
	For a given integer $r \in \{2, \ldots, p^*\}$, we assume that the coefficients $\alpha_h$ its time-derivative $\alpha_{ht}$  approximate $\psi_t$ and $\psi_{tt}$, respectively, up to the following accuracy:
	\begin{equation}
		\begin{aligned}
			&\|\pt-\alpha_h\|_{L^\infty(0,t; L^2)} \leq C_* h^{r} \|\psi\|_{\tilde X_{r+1}},\\
			&{\nLtwoLtwot{\ptt-\alpha_{ht}}} \leq C_* h^{r} \|\psi\|_{\tilde X_{r+1}},
		\end{aligned}
	\end{equation}
for all $t\in [0,T)$, where $C_*>0$ does not depend on $h$ or $b$.
\end{assumptionp}
Unlike before, in the present error analysis we need to establish error bounds not only on the approximate initial data, but also the second time derivative of $\psi_h$ at $t=0$. This is the subject of the following lemma. It is useful to note that $\tilde X^{r+1} \hookrightarrow C^2([0,T]; H^r(\Omega))$.
\begin{lemma}\label{lem:init_cond_ineq}
Let $r \in \{2, \ldots, p^*\}$. With the choice of initial conditions made in \eqref{eq:init_eq_proj}, we have %\vanja{we need assumptions on $r$ in the statement}
	\begin{align}\label{eq:lem_init_1}
		\|\psi_{jh}-\psi_j\|_{L^2} \lesssim h^r \|\psi_j\|_{H^r} \quad \textrm{ for \ $j=0,1$}.
	\end{align}
	Moreover,
	\begin{align}\label{eq:lem_init_2}
	\|\psi_{htt}(0)-\psi_{tt}(0)\|_{L^2} \lesssim h^r \|\psi\|_{\tilde X^{r+1}}.
	\end{align}
\end{lemma}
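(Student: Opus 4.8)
The plan is as follows. The bound \eqref{eq:lem_init_1} is immediate from the choice of data: by \eqref{eq:init_eq_proj} we have $\psi_{jh}-\psi_j=\tildePp{\psi_j}-\psi_j$, and since $\psi_0=\psi(0)$ and $\psi_1=\pt(0)$ both lie in $H^{r+1}(\Om)$ because $\psi\in\tilde X_{r+1}$ (indeed $\tilde X_{r+1}\hookrightarrow C^2([0,T];H^r(\Om))$), the first estimate in \eqref{mixed_projection} of Lemma~\ref{lem:interpolation_error}, valid for $2\le r\le p^*$, gives $\|\psi_{jh}-\psi_j\|_{L^2}\le Ch^r\|\psi_j\|_{H^r}$.

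For \eqref{eq:lem_init_2} the idea is to compare the semi-discrete and the exact equations at $t=0$. Since $(\ph,\bvh)$ is smooth enough in time and $f\in W^{1,1}(0,T;L^2)\hookrightarrow C([0,T];L^2)$, the semi-discrete equation \eqref{weak_approx_linear_inviscid} can be evaluated at $t=0$; likewise $\psi\in\tilde X_{r+1}$ renders every term of the exact Westervelt equation continuous in time with values in $L^2(\Om)$, so that equation holds in $L^2(\Om)$ at $t=0$ and may be tested against $\phi_h\in\spaceS$. Subtracting and using that, for the mixed-projection data \eqref{eq:init_eq_proj}, one has $\bvh(0)=\tildePv{\bv(0)}$ and $\bvht(0)=\tildePv{\bvt(0)}$, the first defining relation of the mixed projection \eqref{eq:mixed_proj} makes the divergence contributions drop out, leaving
\[
((1+2\tilde{k}\alpha_h(0))\phtt(0),\phi_h)_{L^2}=((1+2\tilde{k}\pt(0))\ptt(0),\phi_h)_{L^2}\qquad\textup{for all }\phi_h\in\spaceS.
\]
Introducing the nodal interpolant $I_h$ onto $\spaceS$ (well defined since $\ptt(0)\in H^r(\Om)\hookrightarrow C(\overline{\Om})$ for $r\ge2$) and taking $\phi_h=z:=\phtt(0)-I_h\ptt(0)\in\spaceS$, one rearranges this into
\[
((1+2\tilde{k}\alpha_h(0))z,z)_{L^2}=((1+2\tilde{k}\pt(0))(\ptt(0)-I_h\ptt(0)),z)_{L^2}+2\tilde{k}((\pt(0)-\alpha_h(0))I_h\ptt(0),z)_{L^2}.
\]
The left side is bounded below by $(1-2|\tilde{k}|\ulal)\|z\|_{L^2}^2$ by Assumption~\ref{Assumption_inv}; on the right, $\|1+2\tilde{k}\pt(0)\|_{L^\infty}$ and $\|I_h\ptt(0)\|_{L^\infty}$ are controlled by $\|\psi\|_{\tilde X_{r+1}}$ (via $H^r\hookrightarrow L^\infty$ and the $L^\infty$-stability of $I_h$), $\|\ptt(0)-I_h\ptt(0)\|_{L^2}\lesssim h^r\|\ptt(0)\|_{H^r}\lesssim h^r\|\psi\|_{\tilde X_{r+1}}$ by the approximation properties of $\spaceS$, and $\|\pt(0)-\alpha_h(0)\|_{L^2}\le\|\pt-\alpha_h\|_{L^\infty(0,t;L^2)}\le C_*h^r\|\psi\|_{\tilde X_{r+1}}$ by Assumption~\ref{Wes_reg} (the value at $t=0$ being legitimate by continuity in time). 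Dividing by $\|z\|_{L^2}$ yields $\|z\|_{L^2}\lesssim h^r\|\psi\|_{\tilde X_{r+1}}$, whence $\|\phtt(0)-\ptt(0)\|_{L^2}\le\|z\|_{L^2}+\|I_h\ptt(0)-\ptt(0)\|_{L^2}\lesssim h^r\|\psi\|_{\tilde X_{r+1}}$, which is \eqref{eq:lem_init_2}.

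The step requiring the most care — and the one that forces the choice \eqref{eq:init_eq_proj} — is the cancellation of the divergence terms at $t=0$: it rests on the first line of the mixed projection \eqref{eq:mixed_proj} and would be lost if the discrete data were taken instead as, e.g., $L^2$-projections or nodal interpolants of $(\psi_0,\bv(0))$, $(\psi_1,\bvt(0))$. Everything else — that both equations may be evaluated pointwise at $t=0$, and that $\ptt(0)$ carries enough regularity for the interpolation and Sobolev embedding estimates — follows routinely once the embeddings $\tilde X_{r+1}\hookrightarrow C^2([0,T];H^r(\Om))$ and $W^{1,1}(0,T;L^2)\hookrightarrow C([0,T];L^2)$ are invoked.
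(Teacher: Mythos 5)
Your proof is correct and follows essentially the same route as the paper: evaluate both equations at $t=0$, use the mixed-projection choice of $(\bv_{0h},\bv_{1h})$ to cancel the divergence terms against $\phi_h\in\spaceS$, insert a discrete approximant of $\ptt(0)$, and test with the difference, combining the non-degeneracy of $1+2\tilde{k}\alpha_h$ with Assumption~\ref{Wes_reg} and the approximation estimates. The only (immaterial) difference is that you use the nodal interpolant $I_h\ptt(0)$ as the intermediary where the paper uses the mixed projection $\tildePp{\psi_{tt}(0)}$; both yield the same $O(h^r)$ bound.
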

\begin{proof}
	The bound \eqref{eq:lem_init_1} is a direct result of the properties of the mixed projection; see Lemma~\ref{lem:interpolation_error}. To establish \eqref{eq:lem_init_2}, notice that due to the established regularities, the exact and approximate equations have to hold pointwise in time. In particular, at time zero 
	\begin{equation} 
		%\left \{
		\begin{aligned}
			& \begin{multlined}[t]((1+2\tilde{k}\alpha_h(x,0))\big(\psi_{htt}(0)-\psi_{tt}(0)\big), \phi_h)_{L^2} \\- (c^2 \nabla \cdot \big(\bvh(0) - \bv(0)\big) + b \nabla \cdot \big(\bvht(0) - \bvt(0)\big), \phi_h)_{L^2}=2k((\psi_t(0)-\alpha_h(x,0))\psi_{tt}(0), \phi_h),\end{multlined}%\\[2mm]
			%& (\bvh, \boldsymbol{w}_h)_{L^2}+(\psi_h, \nabla \cdot \boldsymbol{w}_h)_{L^2}= 0,
		\end{aligned} %\right.
	\end{equation}
	for all $\phi_h \in \spaceS$. Note that due to the choice of initial data and to the definition of the mixed projection, we have
	$$c^2 \nabla \cdot \big(\bvh(0) - \bv(0)\big) + b \nabla \cdot \big(\bvht(0) - \bvt(0)\big) = c^2 \nabla \cdot \big(\tildePv{\bv_0} - \bv_0\big) + b \nabla \cdot \big(\tildePv{\bv_1} - \bv_1\big) = 0.  $$ 
	It then follows, by injecting $\tildePp{\psi_{tt}(0)}$, that 
		\begin{equation} 
		%\left \{
		\begin{aligned}
			& \begin{multlined}[t]((1+2\tilde{k}\alpha_h(x,0))\big(\psi_{htt}(0)-\tildePp \psi_{tt}(0)\big), \phi_h)_{L^2} =\\((1+2\tilde{k}\alpha_h(x,0))\big(\psi_{tt}(0)-\tildePp \psi_{tt}(0)\big), \phi_h)_{L^2}+2k((\psi_t(0)-\alpha_h(x,0))\psi_{tt}(0), \phi_h).\end{multlined}\\[2mm]
			%& (\bvh, \boldsymbol{w}_h)_{L^2}+(\psi_h, \nabla \cdot \boldsymbol{w}_h)_{L^2}= 0,
		\end{aligned} %\right.
	\end{equation}
	Testing with $\phi_h=\big(\psi_{htt}(0)-\tildePp \psi_{tt}(0)\big) \in \spaceS$ yields the following inequality:
	$$ \nLtwo{\psi_{htt}(0)-\tildePp \psi_{tt}(0)} \lesssim \nLtwo{\psi_{tt}(0)-\tildePp \psi_{tt}(0)} +  \nLtwo{\big(\psi_{t}(0)-\alpha_h(x,0)\big)\psi_{tt}(0)}.$$
	We can then use Assumption~\ref{Wes_reg} and the properties of the mixed projection on $\psi_{tt}(0)$ to conclude the proof.
\end{proof}
\noindent Subsequently, we can also state a uniform error estimate for the linearized problem. % an error estimate for the linearized inviscid problem \eqref{weak_approx_linear_inviscid}. 

\begin{proposition}\label{Prop:LininvL2}  $b \in [0, \bar{b})$ for some $\bar{b}>0$.  Let $2 \leq r \leq p^*$ and Assumptions~\ref{Assumption_inv} and \ref{Wes_reg} hold. Furthermore, let $\psi \in \tilde X_{r+1}$ be the solution of the exact problem \eqref{eq:continuous_problem} with a sufficiently smooth source term $f$, and coupled with homogeneous Dirichlet data and suitable initial conditions $(\psi(0),\psi_t(0))=(\psi_0,\psi_1)$ and let $v = \nabla \psi$. 	
	Let the approximate initial conditions of \eqref{IBVP_approx_Kuznetsov} be set by \eqref{eq:init_eq_proj}.
	Then the solution $(\ph, \bvh)$ of 
	\eqref{weak_approx_linear} satisfies the following bound:%\vanja{The norm should be with $\tilde{X}$}
	\begin{equation} \label{est_lin_inviscid}
		\begin{aligned}
			&\begin{multlined}[t] 
				E_{\textup{W}}[\psi-{\psi}_h, \bv-\bvh](t) \lesssim\, h^{2r} \|\psi\|^2_{\tilde X_{r+1}} \end{multlined}
		\end{aligned}
	\end{equation}
	for $t \in [0,T]$, where the hidden constant is independent of $b$ and $h$, and has the following form: %\vanja{Should it be $T$ instead of $t$ in the norms like in Prop \ref{Prop:LinL2}}
	\[
	C_{\textup{lin}} =  C(T) \left(1+C_*^2k^2 \big(\|\psi_{tt}\|_{L^2(0,T; L^\infty)}^2 + \|\psi_{ttt}\|^2_{L^2(0,T; L^\infty)}\big)\right).
	\]
	\begin{comment}
	Additionally, a bound on $\nLtwo{\psi(t)-\psi_{h}(t)}$ can be obtained using\vanja{This is probably unnecessary as this term is already contained in the energy}
	\[
	\nLtwo{\psi(t)-\psi_h(t)}=\left \|\int_0^t (\psi_t(s)-\psi_{ht}(s))\ds \right\|_{L^2} \leq T\sup_{s\in (0,t)} \nLtwo{\psi_t(s)-\psi_{ht}(s)}.
	\]	
	\end{comment}
\end{proposition}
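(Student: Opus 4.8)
The plan is to follow the architecture of the proof of Proposition~\ref{Prop:LinL2}, but to invoke the $b$-uniform stability estimate for $E_{\textup{W}}$ established above — which bounds the error by $E_{\textup{W}}(0)$ together with \emph{both} $\nLoneLtwot{\tilde f}^2$ and $\nLoneLtwot{\tilde f_t}^2$ — in place of the non-uniform estimate of Proposition~\ref{Prop:LinStability}. First I would split the error through the mixed projection \eqref{eq:mixed_proj}, writing $e_\psi = \tilde{e}_\psi + \tilde{e}_{\psi h}$ and $\boldsymbol{e}_{\bv} = \tilde{\boldsymbol{e}}_{\bv} + \tilde{\boldsymbol{e}}_{\boldsymbol{v}h}$ as in \eqref{split_error}. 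Subtracting \eqref{weak_approx_linear_inviscid} from the exact equation (which holds pointwise in time by the assumed regularity $\psi \in \tilde X_{r+1}$) and using both $(\nabla\cdot\tilde{\boldsymbol{e}}_{\bv}(t),\phi_h)_{L^2} = 0$ and $(\nabla\cdot\tilde{\boldsymbol{e}}_{\boldsymbol{v}t}(t),\phi_h)_{L^2}=0$ (the latter by differentiating the first line of \eqref{eq:mixed_proj} in time), together with the second line of \eqref{eq:mixed_proj}, one finds that $(\tilde{e}_{\psi h},\tilde{\boldsymbol{e}}_{\boldsymbol{v}h})$ solves a problem of the form \eqref{weak_approx_linear_inviscid} with coefficient $\alpha_h$ and right-hand side
\[
\tilde f = -(1+2\tilde{k}\alpha_h)\tilde{e}_{\psi tt} - 2\tilde{k}(\psi_t-\alpha_h)\psi_{tt},
\]
which carries no dependence on $b$; moreover $\tilde f \in W^{1,1}(0,T;L^2)$ thanks to $\psi \in \tilde X_{r+1}$ and Assumption~\ref{Assumption_inv}, so the uniform stability estimate applies. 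Because the approximate initial data are the mixed projections \eqref{eq:init_eq_proj}, one has $\tilde{e}_{\psi h}(0) = \tilde{e}_{\psi h t}(0) = 0$ and $\tilde{\boldsymbol{e}}_{\boldsymbol{v}h}(0) = \tilde{\boldsymbol{e}}_{\boldsymbol{v}ht}(0) = 0$ (choosing $\bwh = \tilde{\boldsymbol{e}}_{\boldsymbol{v}h}(0)$ and its time-differentiated analogue in the second line), while $\tilde{e}_{\psi h tt}(0) = \tildePp{\psi_{tt}(0)} - \psi_{htt}(0)$ is controlled by the triangle inequality together with Lemma~\ref{lem:init_cond_ineq} and the $L^2$ mixed-projection bound of Lemma~\ref{lem:interpolation_error}; hence $E_{\textup{W}}[\tilde{e}_{\psi h},\tilde{\boldsymbol{e}}_{\boldsymbol{v}h}](0) = \nLtwo{\tilde{e}_{\psi h tt}(0)}^2 \lesssim h^{2r}\|\psi\|^2_{\tilde X_{r+1}}$, uniformly in $b$.

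Next I would bound the data. Since the $L^2$ mixed projection is linear and time-independent, it commutes with $\partial_t$, so $\partial_t^j \tilde{e}_\psi = \psi^{(j)} - \tildePp{\psi^{(j)}}$, and Lemma~\ref{lem:interpolation_error} gives $\nLtwoLtwot{\tilde{e}_{\psi tt}} \lesssim h^r\|\psi\|_{\tilde X_{r+1}}$ and $\nLtwoLtwot{\tilde{e}_{\psi ttt}} \lesssim h^r\|\psi\|_{\tilde X_{r+1}}$ (using $\psi_{tt},\psi_{ttt}\in L^2(0,T;H^r)$). Combining this with Assumptions~\ref{Assumption_inv} and~\ref{Wes_reg}, Hölder's inequality in time (to pass from $L^1_t$ to $L^2_t$ at the cost of a factor $\sqrt t$), and the embedding $H^r(\Omega)\hookrightarrow L^\infty(\Omega)$ valid since $r\ge 2$, I would obtain
\[
\nLoneLtwot{\tilde f} \lesssim h^r\|\psi\|_{\tilde X_{r+1}}\bigl(1 + |\tilde{k}|C_*\|\psi_{tt}\|_{L^2(0,T;L^\infty)}\bigr),
\]
and, after differentiating $\tilde f$ term by term, using $\tilde f_t = -2\tilde{k}\alpha_{ht}\tilde{e}_{\psi tt} - (1+2\tilde{k}\alpha_h)\tilde{e}_{\psi ttt} - 2\tilde{k}(\psi_{tt}-\alpha_{ht})\psi_{tt} - 2\tilde{k}(\psi_t-\alpha_h)\psi_{ttt}$,
\[
\nLoneLtwot{\tilde f_t} \lesssim h^r\|\psi\|_{\tilde X_{r+1}}\bigl(1 + |\tilde{k}|C_*(\|\psi_{tt}\|_{L^2(0,T;L^\infty)} + \|\psi_{ttt}\|_{L^2(0,T;L^\infty)})\bigr),
\]
where the term carrying $\alpha_{ht}$ is absorbed via its $L^2(0,T;L^\infty)$ bound from Assumption~\ref{Assumption_inv} and the term carrying $\psi_{tt}-\alpha_{ht}$ via Assumption~\ref{Wes_reg}; crucially, none of these bounds involves $b$.

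Feeding $E_{\textup{W}}[\tilde{e}_{\psi h},\tilde{\boldsymbol{e}}_{\boldsymbol{v}h}](0)$, $\nLoneLtwot{\tilde f}^2$ and $\nLoneLtwot{\tilde f_t}^2$ into the $b$-uniform stability estimate then yields $E_{\textup{W}}[\tilde{e}_{\psi h},\tilde{\boldsymbol{e}}_{\boldsymbol{v}h}](t) \lesssim h^{2r}\|\psi\|^2_{\tilde X_{r+1}}$ with a hidden constant of the announced form $C_{\textup{lin}}$, independent of $b$ and $h$; the bound \eqref{est_lin_inviscid} follows by the triangle inequality, since the projection contributions $\tilde{e}_\psi$, $\partial_t\tilde{e}_\psi$, $\partial_t^2\tilde{e}_\psi$, $\tilde{\boldsymbol{e}}_{\bv}$ and $\partial_t\tilde{\boldsymbol{e}}_{\bv}$ are all $\lesssim h^r\|\psi\|_{\tilde X_{r+1}}$ by Lemma~\ref{lem:interpolation_error}. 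The main obstacle I anticipate is precisely the treatment of $\tilde f_t$ forced by the uniform estimate: passing to the time-differentiated equation is what (i) necessitates the extra regularity $\psi_{ttt}\in L^2(0,T;H^r)$ built into $\tilde X_{r+1}$, (ii) requires knowing that the mixed projection commutes with $\partial_t$ so that $\partial_t^3(\psi-\tildePp{\psi})$ still enjoys the optimal rate $h^r$, and (iii) makes the non-trivial initial estimate $\nLtwo{\psi_{htt}(0)-\psi_{tt}(0)}\lesssim h^r$ of Lemma~\ref{lem:init_cond_ineq} indispensable, because — in contrast with $\tilde{e}_{\psi h}(0)$ and $\tilde{e}_{\psi h t}(0)$ — the quantity $\tilde{e}_{\psi h tt}(0)$ does not vanish and enters $E_{\textup{W}}(0)$. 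Once these ingredients are secured, the $b$-independence of the final bound is automatic, since $\tilde f$ and $\tilde f_t$ carry no $b$ and the stability estimate invoked is itself uniform in $b$.
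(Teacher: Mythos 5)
Your proposal is correct and follows essentially the same route as the paper: retrace the argument of Proposition~\ref{Prop:LinL2} but invoke the $b$-uniform stability estimate for $E_{\textup{W}}$, control the initial energy via the vanishing of $\tilde{e}_{\psi h}(0)$, $\tilde{e}_{\psi h t}(0)$, $\tilde{\boldsymbol{e}}_{\boldsymbol{v}h}(0)$, $\tilde{\boldsymbol{e}}_{\boldsymbol{v}ht}(0)$ together with Lemma~\ref{lem:init_cond_ineq} for the non-vanishing term $\tilde{e}_{\psi h tt}(0)$, and bound the modified source $\tilde f$ and its time derivative $\tilde f_t$ (your expression for $\tilde f_t$ matches the paper's exactly) using Assumptions~\ref{Assumption_inv} and~\ref{Wes_reg} and the mixed-projection estimates. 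Your write-up is in fact somewhat more explicit than the paper's on the commutation of the mixed projection with $\partial_t$ and on the term-by-term treatment of $\tilde f_t$, but there is no substantive difference in approach.
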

\begin{proof}
	Retracing the steps in the proof of Proposition~\ref{Prop:LinL2}, we arrive at
	\begin{equation}
		\begin{aligned}
		E_{\textup{W}}[\tilde{e}_{\psi h}, \tilde{\boldsymbol{e}}_{\boldsymbol{v}h}](t) \lesssim\, E_{\textup{W}}[\tilde{e}_{\psi h}, \tilde{\boldsymbol{e}}_{\boldsymbol{v}h}](0) +  \nLtwoLtwot{\tilde f}^2 + \nLtwoLtwot{\tilde f_t}^2,
		\end{aligned}
	\end{equation}
where we can uniformly bound the initial energy \[E_{\textup{W}}[\tilde{e}_{\psi h}, \tilde{\boldsymbol{e}}_{\boldsymbol{v}h}](0) = \nLtwo{\psi_{htt}(0)-\tildePp \psi_{tt}(0)}\leq h^{2r} \|\psi\|_{\tilde X^r}^2\] thanks to Lemma~\ref{lem:init_cond_ineq}.
\color{black}
	On the other hand, the modified source term is now given by  $$\tilde f = -(1+2\tilde{k}\alpha_h)\tilde{e}_{\psi tt}-2\tilde{k}(\pt-\alpha_h)\psi_{tt},
	$$
	and its time derivative by%\vanja{The last term on the right should have two derivatives of $\psi$ in $t$; $\psi_{tt}$}
	$$\tilde f_t = -(1+2\tilde{k}\alpha_h)\tilde{e}_{\psi ttt}-2\tilde{k}\alpha_{ht}\tilde{e}_{\psi tt} -2\tilde{k}(\pt-\alpha_h)\psi_{ttt} - 2\tilde{k}(\ptt-\alpha_{ht})\psi_{tt}.
	$$
	Then we have using Assumption~\ref{Wes_reg}
	\begin{equation}
		\begin{aligned}
			\|\tilde{f}\|_{L^2(0,t; L^2)} & \leq \begin{multlined}[t] (1+2|\tilde{k}|\olal) \|\tilde{e}_{\psi tt}\|_{L^2(0,t; L^2)}+2|\tilde{k}|\|\psi_t-\alpha_h\|_{L^\infty(0,t; L^2)}\|\psi_{tt}\|_{L^2(0,t; L^\infty)}
			\end{multlined}\\
			& \lesssim \begin{multlined}[t]  h^r \|\psi \|_{\tilde X_{r+1}}+ |\tilde{k}| C_* h^r \|\psi\|_{\tilde X_{r+1}}\|\psi_{tt}\|_{L^2(0,t; L^\infty)}.\end{multlined}
		\end{aligned}
	\end{equation}
	Similarly, 
	\begin{equation}
		\begin{aligned}
			\|\tilde{f_t}\|_{L^2(0,t; L^2)} 
			& \lesssim \begin{multlined}[t]  h^r \|\psi \|_{\tilde X_{r+1}}+ |\tilde{k}| C_* h^r \|\psi\|_{\tilde X_{r+1}}\left({\|\psi_{tt}\|_{L^\infty(0,t; L^\infty)}} + \|\psi_{ttt}\|_{L^2(0,t; L^\infty)}\right), \end{multlined}
		\end{aligned}
	\end{equation}
which yields \eqref{est_lin_inviscid}.
%	provided we assume 
%	$\nLtwo{\ptt(t)-\alpha_{ht}(t)} \leq C_* h^{r} \|\psi\|_{\tilde X_{r+1}}$.
\end{proof}

Given $r \in \{2, \ldots, p^*\}$ and approximate initial data $(\psi_{0h}, \psi_{1h}) \in \spaceS \times \spaceS$, we introduce the ball
\begin{equation} \label{ball_West}
	\begin{aligned}
		\mathcal{B}_{\textup{W}}=\left\{\vphantom{\int_0^t}\right.&(\psi_h^*, \bv^*_h) \in {C^2([0,T]; \spaceS)} \times {H^2(0,T; \spaceV)}:\
		(\psi_h^*, \psi^*_{ht},\bv_h^*, \bv^*_{ht})\vert_{t=0} = (\psi_{0h}, \psi_{1h}, \bv_{0h},\bv_{1h}),\\ &\, \sup_{t \in (0,T)} E_{\textup{W}}[\psi-{\psi}^*_h, \bv-\bvh^*](t) \leq {\tilde C_{\textup{nl}}}^2 h^{2r} \|\psi\|^2_{\tilde X_{r+1}} \left.\vphantom{\int_0^t}\right\},
	\end{aligned}
\end{equation}
and proceed to prove a unique solvability of the nonlinear semi-discrete problem in $\mathcal{B}_{\textup{W}}$.
\begin{theorem} \label{Thm:Wes} $b \in [0, \bar{b})$. Let $2 \leq r \leq p^*$ and 
	let $\psi \in \tilde X_{r+1}$ be the solution of the exact problem \eqref{eq:continuous_problem} with a sufficiently smooth source term $f$, and coupled with homogeneous Dirichlet data and suitable initial conditions $(\psi(0),\psi_t(0))=(\psi_0,\psi_1)$ and let $v = \nabla \psi$. 
	Furthermore, let the approximate initial data $(\psi_{0h}, \psi_{1h})$ be chosen as in Proposition~\ref{Prop:LinL2}. Then there exist \[\overline{h} = \overline{h}(\|\psi\|_{\tilde X_{r+1}})<1 \quad \text{and} \quad M = M(\tilde{k},\sigma, T)>0,\]   
	{such that for $0<h<\overline{h}$ and 
		\begin{equation}
			\sup_{t \in (0,T)}\|\ptt(t)\|^2_{L^\infty} + \sup_{t \in  (0,T)}\|\pt(t)\|^2_{L^\infty}\leq M,
		\end{equation}}
	there is a unique $(\psi_h, \bv_h)\in \mathcal{B}_{\textup{W}}$, which solves {\eqref{weak_approx_linear_inviscid} supplemented by approximate initial conditions \eqref{approx_initcond}}. The constant ${\tilde C_{\textup{nl}}}>0$ in \eqref{ball_West} is independent of $h$ and $b$.
\end{theorem}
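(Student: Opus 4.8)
The plan is to apply the Banach fixed-point theorem to the map $\mathcal{F}\colon \mathcal{B}_{\textup{W}} \ni (\psi_h^*, \bvh^*) \mapsto (\psi_h, \bvh)$, where $(\psi_h,\bvh)$ solves the linearized problem \eqref{weak_approx_linear_inviscid}, supplemented by \eqref{approx_initcond}, with the coefficient frozen at $\alpha_h = \psi_{ht}^*$. As in the proof of Theorem~\ref{Thm:Kuzn}, three things have to be checked, but now \emph{uniformly in} $b \in [0,\bar b)$: that $\mathcal{F}$ is well defined on $\mathcal{B}_{\textup{W}}$ (i.e.\ that Assumptions~\ref{Assumption_inv} and~\ref{Wes_reg} hold for $\alpha_h = \psi_{ht}^*$), that $\mathcal{F}(\mathcal{B}_{\textup{W}}) \subseteq \mathcal{B}_{\textup{W}}$, and that $\mathcal{F}$ is a strict contraction in an appropriate metric on $\mathcal{B}_{\textup{W}}$.

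To see that $\mathcal{F}$ is well defined, fix $(\psi_h^*, \bvh^*) \in \mathcal{B}_{\textup{W}}$; since $\psi_h^* \in C^2([0,T];\spaceS)$ we have $\alpha_h = \psi_{ht}^* \in H^1(0,T;\spaceS)$, and $f \in W^{1,1}(0,T;L^2(\Omega))$ by hypothesis, so the structural requirements of Assumption~\ref{Assumption_inv} are met. For the quantitative part, I would split off the nodal interpolant $I_h$ and use the inverse estimate $\nLinf{w_h} \lesssim \overline{h}^{-d/2}\nLtwo{w_h}$ on $\spaceS$, writing for instance $\|\psi_{ht}^*\|_{L^\infty(L^\infty)} \le \overline{h}^{-d/2}\big(\|\psi_{ht}^* - \psi_t\|_{L^\infty(L^2)} + \|\psi_t - I_h\psi_t\|_{L^\infty(L^2)}\big) + \|I_h\psi_t\|_{L^\infty(L^\infty)}$, and an analogous bound for $\alpha_{ht} = \psi_{htt}^*$ in the $L^2(0,T;L^\infty)$ norm. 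The first contribution is controlled by $\overline{h}^{-d/2}\,\tilde C_{\textup{nl}}\,\overline{h}^{\,r}\|\psi\|_{\tilde X_{r+1}}$, since membership in $\mathcal{B}_{\textup{W}}$ bounds $\|\psi_t - \psi_{ht}^*\|_{L^\infty(L^2)}$ and $\|\psi_{tt} - \psi_{htt}^*\|_{L^\infty(L^2)}$ pointwise in time; the second by $\overline{h}^{-d/2}$ times an interpolation error; both vanish as $\overline{h} \to 0$ because $r \ge 2 > d/2$ for $d \in \{2,3\}$. The remaining interpolation-of-exact term is bounded by $\|\psi_t\|_{L^\infty(L^\infty)} \le M^{1/2}$, respectively $\|\psi_{tt}\|_{L^2(L^\infty)} \le (TM)^{1/2}$, up to vanishing terms. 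Hence, for $\overline{h}$ and $M$ small enough, one may take $\ulal = \olal = C\big((1+\tilde C_{\textup{nl}})\overline{h}^{\,r-d/2}\|\psi\|_{\tilde X_{r+1}} + M^{1/2}\big) \in (0, \tfrac1{2|\tilde{k}|})$, so that $1 - 2|\tilde{k}|\ulal > 0$, and $\check\alpha$ a fixed constant, all independent of $h$ and $b$. Assumption~\ref{Wes_reg} with $C_* \sim \tilde C_{\textup{nl}}$ follows at once, since both $\|\psi_t - \alpha_h\|_{L^\infty(L^2)}$ and $\nLtwoLtwot{\psi_{tt}-\alpha_{ht}}$ are dominated by $\big(\sup_t E_{\textup{W}}[\psi - \psi_h^*, \bv - \bvh^*](t)\big)^{1/2}$.

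For the self-mapping property, Proposition~\ref{Prop:LininvL2} applied with $\alpha_h = \psi_{ht}^*$ gives $\sup_t E_{\textup{W}}[\psi - \psi_h, \bv - \bvh](t) \le C_{\textup{lin}}\,h^{2r}\|\psi\|^2_{\tilde X_{r+1}}$ with $C_{\textup{lin}} = C(T)\big(1 + \tilde C_{\textup{nl}}^2\,\tilde{k}^2(\|\psi_{tt}\|^2_{L^2(L^\infty)} + \|\psi_{ttt}\|^2_{L^2(L^\infty)})\big)$. Bounding $\|\psi_{tt}\|^2_{L^2(L^\infty)} \le TM$ and $\|\psi_{ttt}\|^2_{L^2(L^\infty)} \lesssim \|\psi\|^2_{\tilde X_{r+1}}$ (using $\tilde X_{r+1}\hookrightarrow C^2([0,T];H^r)$ and $H^r\hookrightarrow L^\infty$ for $d \le 3$), and requiring — as for Theorem~\ref{Thm:Kuzn} — that $\tilde{k}^2\big(\|\psi_{tt}\|^2_{L^2(L^\infty)} + \|\psi_{ttt}\|^2_{L^2(L^\infty)}\big)$ be below $1/C(T)$, which is secured by shrinking $M$ and, through $\|\psi\|_{\tilde X_{r+1}}$, the data, I would fix $\tilde C_{\textup{nl}}^2 \ge \big(1/C(T) - \tilde{k}^2(\|\psi_{tt}\|^2_{L^2(L^\infty)} + \|\psi_{ttt}\|^2_{L^2(L^\infty)})\big)^{-1}$, so that $C_{\textup{lin}} \le \tilde C_{\textup{nl}}^2$ and $(\psi_h,\bvh) \in \mathcal{B}_{\textup{W}}$. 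The required time regularity is inherited because the linear solution lies in $W^{3,1}(0,T;\spaceS)\times W^{3,1}(0,T;\spaceV) \hookrightarrow C^2([0,T];\spaceS)\times H^2(0,T;\spaceV)$; non-emptiness of $\mathcal{B}_{\textup{W}}$ follows since, e.g., $(\tildePp{\psi},\tildePv{\bv})$ satisfies the defining inequality by Lemma~\ref{lem:interpolation_error} after commuting the projection with $\partial_t$.

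For contractivity, given $(\psi_h^{*(j)},\bvh^{*(j)}) \in \mathcal{B}_{\textup{W}}$ and $(\psi_h^{(j)},\bvh^{(j)}) = \mathcal{F}(\psi_h^{*(j)},\bvh^{*(j)})$, $j=1,2$, the differences $(\opsi_h,\obvh)$ solve a problem of the form \eqref{weak_approx_linear_inviscid} with coefficient $\psi_{ht}^{*(1)}$, zero initial data, and source $\tilde f = -2\tilde{k}\,\opsi_{ht}^*\,\psi_{htt}^{(2)}$. The decisive point is to use here the \emph{lower-order} uniform stability bound \eqref{eq:Wes_contractivity}, not the higher-order one, which yields $\sup_t E^{\textup{low}}_{\textup{W}}[\opsi_h,\obvh](t) \lesssim \|\tilde f\|^2_{L^2(L^2)} \lesssim \tilde{k}^2\|\psi_{htt}^{(2)}\|^2_{L^2(L^\infty)}\sup_t\|\opsi_{ht}^*(t)\|^2_{L^2}$ with a $b$-independent constant; and, exactly as in \eqref{eq:error_est_fixed}, an inverse estimate together with $(\psi_h^{(2)},\bvh^{(2)}) \in \mathcal{B}_{\textup{W}}$ and interpolation give $\|\psi_{htt}^{(2)}\|_{L^2(L^\infty)} \lesssim \overline{h}^{\,r-d/2}(1+\tilde C_{\textup{nl}})\|\psi\|_{\tilde X_{r+1}} + (TM)^{1/2}$. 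Thus, in the metric $d(\,\cdot\,,\cdot) = \sup_t\big(E^{\textup{low}}_{\textup{W}}[\text{difference}](t)\big)^{1/2}$ — with respect to which $\mathcal{B}_{\textup{W}}$ is complete by a standard compactness argument in the strong norms — the map $\mathcal{F}$ becomes a strict contraction after a further reduction of $\overline{h}$ and $M$, and the Banach theorem delivers the unique fixed point, which is the solution of the nonlinear semi-discrete Westervelt problem. I expect the genuine obstacles to be exactly these two: first, the uniform-in-$b$ verification of Assumption~\ref{Assumption_inv} for $\alpha_h = \psi_{ht}^*$, where controlling $\alpha_{ht}$ in $L^2(0,T;L^\infty)$ forces an inverse estimate and one must keep the resulting factor $\overline{h}^{\,r-d/2}$ free of negative powers of $h$ (true since $r \ge 2 > d/2$) while dominating the residuals by $\|\psi\|_{\tilde X_{r+1}}$ and $M$ only; and second, that contractivity is available only in the weaker energy $E^{\textup{low}}_{\textup{W}}$, since in the full energy $E_{\textup{W}}$ the stability bound for the difference problem would involve $\tilde f_t$, hence $\psi_{httt}^{(2)}$, for which $\mathcal{B}_{\textup{W}}$ provides no $h$- and $b$-uniform control — passing to the weaker metric, at the cost of the completeness argument, circumvents this.
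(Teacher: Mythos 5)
Your proposal follows essentially the same route as the paper: a Banach fixed-point argument on $\mathcal{B}_{\textup{W}}$ with self-mapping via Proposition~\ref{Prop:LininvL2}, and — crucially — contractivity only in the lower-order energy $E^{\textup{low}}_{\textup{W}}$ because the difference source $\tilde f = -2\tilde{k}\,\opsi^*_{ht}\psi^{(2)}_{htt}$ cannot be controlled in $W^{1,1}(0,T;\spaceS)$, which is exactly the obstruction the paper identifies. The only point you gloss over is the closedness of $\mathcal{B}_{\textup{W}}$ in the weaker metric, which the paper pins down via weak-$^*$ compactness (Banach--Alaoglu) in $E_{\textup{W}}$ plus uniqueness of limits; otherwise the argument matches.
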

\begin{proof}
	The proof is similar to that of Theorem~\ref{Thm:Kuzn} based on the Banach fixed-point theorem. The main difference is that we cannot obtain contractivity of the mapping 
	\[\mathcal{F}: \mathcal{B}_{\textup{W}} \ni (\psi_h^*, \bv^*_h) \mapsto (\psi_h, \bv_h), \]
	 in the $E_{\textup{W}}$ norm as  the right-hand side of the difference equation (analogous to \eqref{eq:source_diff}) 
		\begin{align}
		\tilde{f}=-2\tilde{k} \opsi^*_{ht}\psi^{(2)}_{htt}
	\end{align}
 is not uniformly bounded in $W^{1,1}(0,T; \spaceS)$. \\
	\indent 
We have instead strict contractivity in the lower topology norm $E^{\textup{low}}_{\textup{W}}$ by the lower energy estimate \eqref{eq:Wes_contractivity}. Note that $\mathcal{B}_{\textup{W}}$ is closed with respect to the lower topology $E^{\textup{low}}_{\textup{W}}$. Indeed, take a convergent sequence, $\phi_n \xrightarrow[E^{\textup{low}}_{\textup{W}}]{} \phi$, in $\mathcal{B}_{\textup{W}}$. 
	By virtue of the Banach--Alaoglu theorem, $\mathcal{B}_{\textup{W}}$ (a ball centered around the continuous solution $\psi,\bv$) is weakly-$^*$ compact in $E_{\textup{W}}$. Therefore, $\phi_n$ has a weakly-$^*$ convergent subsequence $\phi^*_n \xrightharpoonup[]{*} \phi^* \in \mathcal{B}_{\textup{W}}$ with respect to $E_{\textup{W}}$. Finally, due to uniqueness of limits, we have $\phi^*=\phi \in \mathcal{B}_{\textup{W}}$, which yields the desired result. 
\end{proof}
\color{black}
%{\vanja{we should add a sentence or two about why the inviscid Kuznetsov equation is not straightforward to do. \Mos{M: I remembered we talked about this at some point, is this the reason we can't do inviscid Kuznetsov?} \Vanja{I meant more to explain why the previous testing strategy fails for the Kuznetsov equation. I realized in the meanwhile that the same issue remains in standard FEM; although one has inverse bounds, the rhs accuracy does not match the lhs, so it does not seem possible to ``close" the fixed-point. Maybe this explanation I wrote is better placed in the conclusion (?). If we leave it here then the last sentence can be replaced with ``This is however outside the scope of the present work"}}}
%A natural question that arises at this point is that of extending the previous \Vanja{uniform} analysis to the case of the  Kuznetov equation. It is however not straightforward to do, given that we lose the strong connection between $\bv$ and $\psi$ in the mixed formulation. This makes it harder to, for example, use inverse estimates to bound norms on $\bv$ by norms on $\psi$.
Note that the previous uniform analysis does not extend in a straightforward manner to the mixed Kuznetsov equation. The main reason is that when $\sigma \neq 0$ the time-differentiation of the first equation in \eqref{weak_approx_linear} introduces the term $(\boldsymbol{\beta}_h(x,t) \cdot \boldsymbol{v}_{htt}, \phi_h)_{L^2}$. Thus, either {$\|\bv_{htt}(t)\|^2_{L^2}$} or $\int_0^t \|\boldsymbol{v}_{htt}\|^2_{L^2} \ds$ have to be incorporated in the semi-discrete energy of the system and a new testing strategy needs to be devised. {This question is left for future work}.

\begin{remark}[On the polynomial degree]
%\vanja{Should this discussion go before the uniform analysis? Maybe we do not need a separate subsection, we can just put it as paragraphs} Before proceeding to the numerical experiments, let us briefly discuss the restrictions on the polynomial degree in the obtained theoretical results. 
Theorems~\ref{Thm:Kuzn} and \ref{Thm:Wes} impose a lower bound condition on $p^*$, namely that $p^*\geq2$. %For BDM and BDDF elements this means that $p\geq2$, while for the other elements considered in this work this assumption invokes the condition $p\geq 1$. 
Note that the lower bound on $p^*$ has two origins.
The first one comes from Lemma~\ref{lem:interpolation_error}. Here one can readily show, by retracing the proof of \cite[Theorem 1.1]{johnson1981error}, that 
\begin{equation}
	\begin{aligned}
		\|\psi-\tildePp{\psi}\|_{L^2} \leq&\, Ch^r  \left \| \psi\right\|_{H^{r+1}}, \quad &&1 \leq r \leq p^*,
	\end{aligned}
\end{equation}
under a higher regularity assumption on $\psi$ but where the minimal polynomial order required is reduced. 
The second source of constraint is the need to control the term $h^{r-d/2}\|\psi\|_{X_{r+1}}$ in the course of the proof of Theorem~\ref{Thm:Kuzn} (see, e.g., \eqref{eq:control_bound_discussion}). This could alternatively  be resolved using the sharper inverse estimate $h^{r-d/2+1}\big(\log\frac1{h}\big)^{3-d}\|\psi\|_{X_{r+1}}$ combined with the $L^q$ error bounds established in Theorem~\ref{prop:Linf}.
\begin{comment}
It might be possible to relax the constraint on the polynomial order by modifying the arguments above and using the sharper inverse estimate $h^{r-d/2+1}\big(\log\frac1{h}\big)^{3-d}\|\psi\|_{X_{r+1}}$ (using the $L^q$ error estimates established in Theorem~\ref{prop:Linf}).
%\textcolor{green}{Instead of using an inverse estimate, an alternative approach would be to use $L^\infty$ bounds of the type established in Proposition~\ref{prop:Linf} here (for the corresponding linearization). One may directly do so using $L^\infty$ bounds for $\psi_{h}$ and $\psi_{ht}$, but $\psi_{htt}$ would need a higher regularity of the semi-discrete solution which one may establish by time-differentiating \eqref{IBVP_approx_Kuznetsov} and testing with {$\psi_{htt} + \tfrac{b}{c^2}\psi_{httt}$}.The higher accuracy of BDM-elements in predicting $\nLtwo{\bv_{h} - \Ih \bv}$ (and $\nLtwo{\bv_{ht} - \Ih \bvt}$) allows us then to bring down the polynomial order constraint for all elements to be $p>\frac{d}2 -1$. }
This would yield the condition $p^*>\frac{d}2 -1$ (together with $p^*\geq1$). 
However, before ascertaining this, a few technical questions have to be resolved, for example, how to construct interpolants with the desired properties in a discontinuous space (when $p^*=1$). This is, however, outside the scope of this work.
\end{comment}
\end{remark}
\color{black}
\section{Numerical experiments}\label{sec:numexp}
We next illustrate some of the established convergence rates for the mixed formulations with numerical experiments. To this end, we use the automated finite element software FEniCS \cite{alnaes2015fenics, logg2010dolfin} to implement a mixed-finite-element solver for the damped Kuznetsov equation and verify the predicted convergence rates. 
The codes used for simulating the problems described below are made available at \href{https://github.com/m-meliani/mFEM_Kuznetsov}{https://github.com/m-meliani/mFEM$\_$Kuznetsov}.
\\
\indent Although beyond the scope of this paper, we expect the rigorous analysis of the fully discrete problem to use similar techniques to those presented previously; a linearized problem could be studied inspired by the analysis provided in \cite[Chapter 8]{raviart1983introduction} (for e.g., the Newmark scheme) then combined with a suitable fixed-point approach. 
We also refer the reader to~\cite{kaltenbacher2021convergence} for a study of implicit Runge-Kutta methods for damped nonlinear acoustic wave equations.
\subsection*{An experiment with known exact solution}
We manufacture the solution on a unit square as follows:
\[
\psi(t,x,y) = A \sin(\omega t)\sin(lt)\sin(lt),
\]
for constants $A= 10^{-2}$, $\omega = 6\pi$, and $l=\pi$. Moreover, we fix the model parameters to be $c = 100$, $b= 6\times10^{-9}$, $\sigma = 1$, and $k= 0.5$.

Given a time step $\Delta t$, we integrate $\psi_h$ in time using a predictor-corrector Newmark scheme \cite{kaltenbacher2007numerical}.
We recall that the Newmark scheme is second-order accurate for $\gamma = 0.5$ and unconditionally stable for $\beta = 0.25$; see \cite[Sec. 9.1.1]{hughes2012finite}. We pick these values for our test.
On the other hand, $\bvh$ is integrated through an implicit Euler scheme.

We show here the results for RT$_{1}$ and BDM$_3$ elements, for which the analysis above predicts a convergence rate of $2$ and $3$ respectively. In Figure~\ref{fig:toyconv1}, we plot the error versus discretization step $h$ in a logarithmic scale. The slope of the graph indicates then the order of convergence.

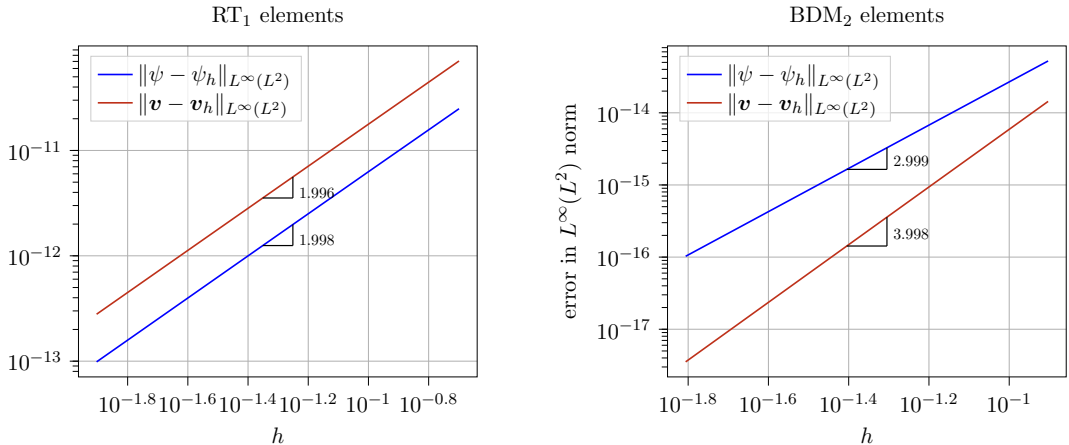
\begin{figure}
	\begin{subfigure}{.5\textwidth}
		\resizebox{!}{6cm}
		{
			% This file was created with tikzplotlib v0.9.15.
\begin{tikzpicture}

\begin{axis}[
	title = RT$_1$ elements,
legend cell align={left},
legend style={
  fill opacity=0.8,
  draw opacity=1,
  text opacity=1,
  at={(0.03,0.97)},
  anchor=north west,
  draw=white!80!black
},
log basis x={10},
log basis y={10},
tick align=outside,
tick pos=left,
x grid style={white!69.0196078431373!black},
xlabel={$h$},
xmajorgrids,
xmin=0.0108818820412015, xmax=0.229739670999407,
xmode=log,
xtick style={color=black},
y grid style={white!69.0196078431373!black},
ymajorgrids,
ymin=7.09026771500737e-14, ymax=9.81819917388076e-11,
ymode=log,
ytick style={color=black}
]
\addplot [thick, tumblau]
table {%
0.2 2.48900410409846e-11
0.1 6.28384803002743e-12
0.05 1.57506281776091e-12
0.025 3.93953387593493e-13
0.0125 9.85036090764521e-14
};
\addlegendentry{$\|\psi - \psi_h\|_{L^\infty(L^2)}$}
\addplot [thick, RUred]
table {%
0.2 7.06711777109092e-11
0.1 1.7755130432523e-11
0.05 4.45435752852486e-12
0.025 1.11521937283927e-12
0.0125 2.79073412983974e-13
};
\addlegendentry{$\|\bv - \bvh\|_{L^\infty(L^2)}$}
\addplot [semithick, black, forget plot]
table {%
0.0561009227150982 1.2512158958887e-12
0.0445625469066873 1.2512158958887e-12
};
\addplot [semithick, black, forget plot]
table {%
0.0561009227150982 1.98202818243625e-12
0.0561009227150982 1.2512158958887e-12
};
\addplot [semithick, black, forget plot]
table {%
0.0561009227150982 3.53908163965635e-12
0.0445625469066873 3.53908163965635e-12
};
\addplot [semithick, black, forget plot]
table {%
0.0561009227150982 5.60444236533969e-12
0.0561009227150982 3.53908163965635e-12
};
\draw (axis cs:0.0563814273286737,1.25982733241374e-12) node[
  scale=0.7,
  anchor=base west,
  text=black,
  rotate=0.0
]{1.998};
\draw (axis cs:0.0563814273286737,3.56288234557906e-12) node[
  scale=0.7,
  anchor=base west,
  text=black,
  rotate=0.0
]{1.996};
\end{axis}

\end{tikzpicture}
		}
	\end{subfigure}%
	\begin{subfigure}{.5\textwidth}
		\resizebox{!}{6cm}
		{
			% This file was created with tikzplotlib v0.9.15.
\begin{tikzpicture}

\begin{axis}[
	title = BDM$_2$ elements,
legend cell align={left},
legend style={
  fill opacity=0.8,
  draw opacity=1,
  text opacity=1,
  at={(0.03,0.97)},
  anchor=north west,
  draw=white!80!black
},
log basis x={10},
log basis y={10},
tick align=outside,
tick pos=left,
x grid style={white!69.0196078431373!black},
xlabel={$h$},
xmajorgrids,
xmin=0.0140820384782942, xmax=0.138696184008481,
xmode=log,
xtick style={color=black},
y grid style={white!69.0196078431373!black},
ylabel={error in \(\displaystyle L^\infty(L^2)\) norm},
ymajorgrids,
ymin=2.18483087773087e-18, ymax=8.45241474654576e-14,
ymode=log,
ytick style={color=black}
]
\addplot [thick, tumblau]
table {%
0.125 5.22944029897038e-14
0.0625 6.56205436836384e-15
0.03125 8.21048641416584e-16
0.015625 1.02655846065397e-16
};
\addlegendentry{$\|\psi - \psi_h\|_{L^\infty(L^2)}$}
\addplot [thick, RUred]
table {%
0.125 1.43876830627453e-14
0.0625 9.02451188838457e-16
0.03125 5.64726506768229e-17
0.015625 3.53137155677578e-18
};
\addlegendentry{$\|\bv - \bvh\|_{L^\infty(L^2)}$}
\addplot [semithick, black, forget plot]
table {%
0.0495866786033354 1.64351608241705e-15
0.0393880988808277 1.64351608241705e-15
};
\addplot [semithick, black, forget plot]
table {%
0.0495866786033354 3.27819476893913e-15
0.0495866786033354 1.64351608241705e-15
};
\addplot [semithick, black, forget plot]
table {%
0.0495866786033354 1.42468783074602e-16
0.0393880988808277 1.42468783074602e-16
};
\addplot [semithick, black, forget plot]
table {%
0.0495866786033354 3.57719141276525e-16
0.0495866786033354 1.42468783074602e-16
};
\draw (axis cs:0.0498346119963521,1.85692491161868e-15) node[
  scale=0.7,
  anchor=base west,
  text=black,
  rotate=0.0
]{2.999};
\draw (axis cs:0.0498346119963521,1.80601325780575e-16) node[
  scale=0.7,
  anchor=base west,
  text=black,
  rotate=0.0
]{3.998};
\end{axis}

\end{tikzpicture}
		}
	\end{subfigure}%
	\caption{Convergence rates for RT$_{1}$ (left) and BDM$_3$ (right) mixed FEM approximation of a problem with known exact solution}
	\label{fig:toyconv1}
\end{figure}

For RT$_k$ elements, the numerical experiment matches the theoretical findings. Numerical experiments with BDM$_k$ elements hint at a higher order of convergence for the vector variable $\bv$. Although this is not completely surprising in light of the higher accuracy of $\nLtwo{\bv_{ht} - \Ih \bv}$ and of the results established for ``unperturbed problems" in \cite[Theorem 5.2.5]{boffi2013mixed}, this result indicates that the approximation theory can be optimized for the gradient of the ultrasound field in the case of BDM elements.

\subsection*{An experiment with unknown solution}
We next test our results in the case where nonlinear steepening of the wave front is apparent. To this end, we solve the Kuznetsov equation \eqref{IBVP_approx_Kuznetsov} on a 2D square domain, see Figure~\ref{fig:steep1}. The following values are chosen as parameters and source term of the problem
\begin{align}
	\begin{array}{rl}
		b =& 6e-9,\ \ \
		c = 1500,\ \ \
		k = 15,\ \ \
		\sigma = 1,\\
		f =& \frac{400}{\sqrt{3\cdot 10^{-2}}} \exp(-5 \cdot 10^4 t) \exp\left(-\frac{(x-0.5)^2+(y-0.5)^2}{18\cdot 10^{-4}}\right).
	\end{array}
\end{align}
In this way a source term centered around $(x,y) = (0.5,0.5)$ and quickly decaying in time creates a wave that travels spherically away from the center. Note that the value of $k$ is much larger than what would generally be expected of usual propagation media. However, in the present setting the wave energy is quickly diluted due to the spherical propagation of the wave leading to a quick attenuation of the peak. A higher value of $k$ is then chosen so that the wave exhibits visible steepening.

To deal with the steepening of the wave we use the Newmark scheme with parameters $\gamma = 0.85$ and $\beta = 0.45$ for the time integration of $\psi_h$. The velocity
$\bvh$ is, as before, integrated through an implicit Euler scheme.

We show in Figure~\ref{fig:steep1} a comparison between the nonlinear (color gradient) and the damped linear solution (pink) which we calculated by setting $k=\sigma = 0$. We can easily see the steepening appearing at the wave front.
\begin{figure}[H]
	\begin{subfigure}{.5\textwidth}
		\resizebox{!}{5cm}
		{
			\includegraphics[clip,trim = {2.2cm 5cm 2.2cm 5cm}]{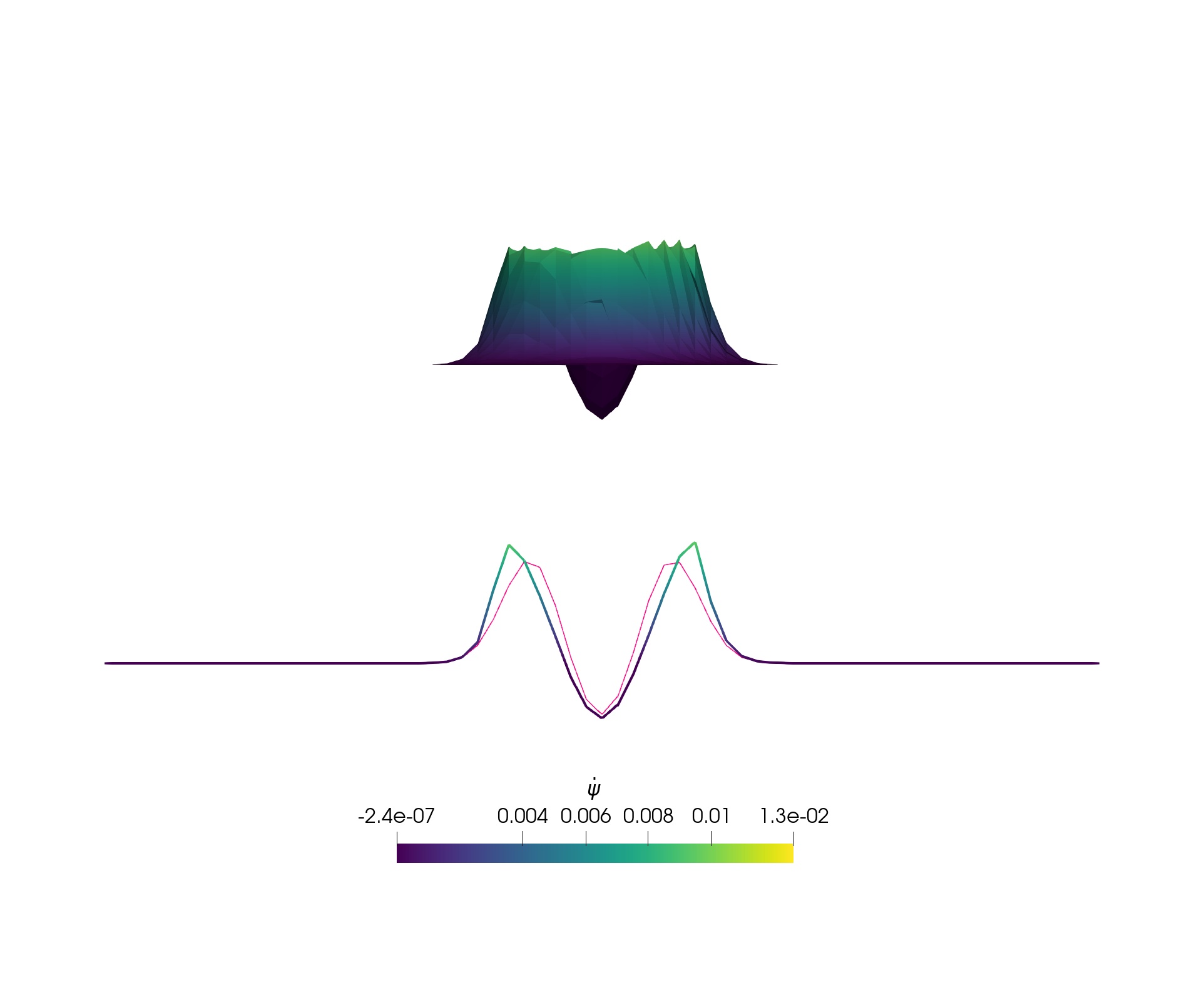}
		}
		\caption{Profile of $\dot{\psi}$ at $t_1$}
	\end{subfigure}%
	\begin{subfigure}{.5\textwidth}
		\resizebox{!}{5cm}
		{
			\includegraphics[clip,trim = {2.2cm 5cm 2.2cm 5cm}]{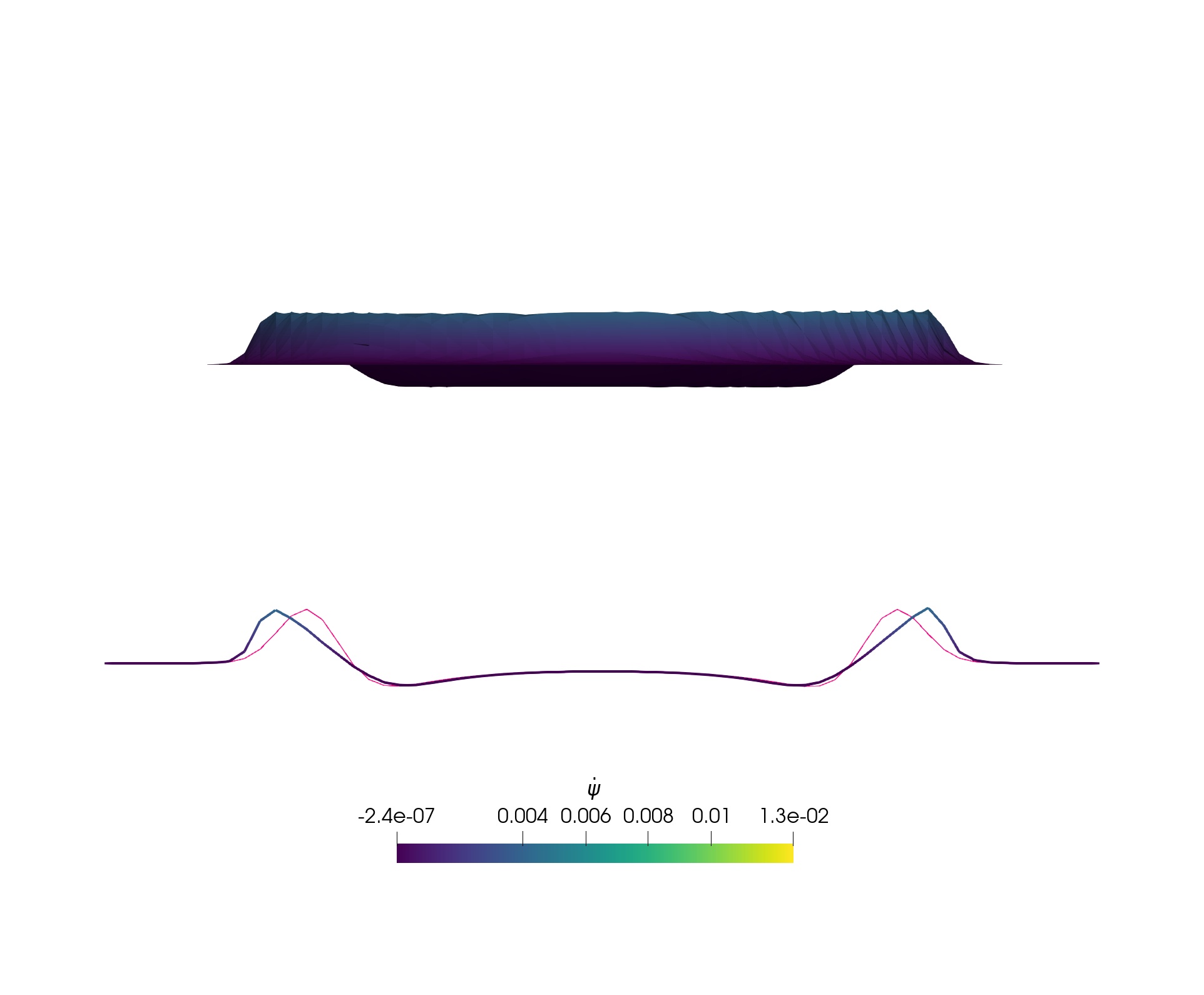}
		}
		\caption{Profile of $\dot\psi$ at $t_2$}
	\end{subfigure}%
	\caption{Comparison of linear (pink) and Kuznetsov (color gradient) solutions along the line $x = 0.5$ at two different time stamps ($t_2> t_1$)}
	\label{fig:steep1}
\end{figure}

In this case we do not have access to the exact solution so we cannot directly compute the norms of the errors $\psi-\psi_h$ and $\bv-\bvh$. 
However we can compute the solution on a finer grid and use that as a reference solution to compute the error. In our case we chose the finer grid to have 128 elements in each spatial direction. We show again the graph of error versus discretization step $h$ in Figure~\ref{fig:errnum2}.

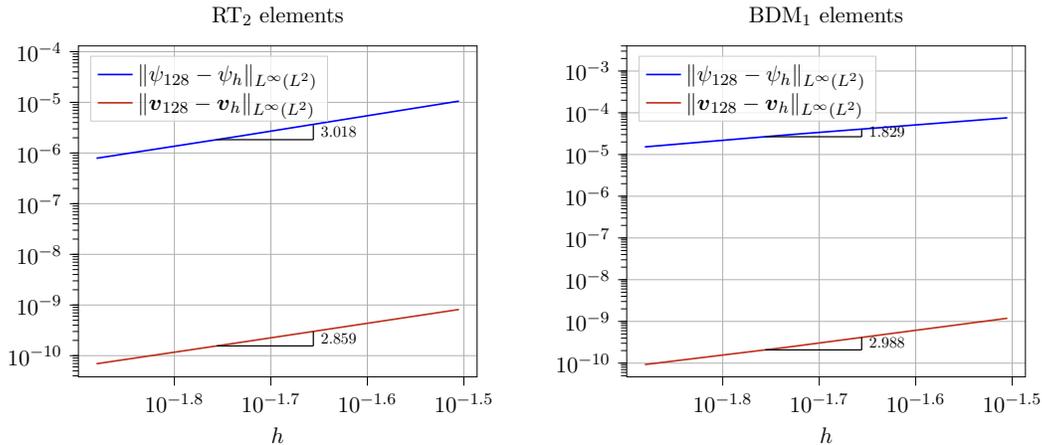
\begin{figure}[H]
	\begin{subfigure}{.5\textwidth}
		\resizebox{!}{6cm}
		{
			% This file was created with tikzplotlib v0.9.15.
\begin{tikzpicture}

\begin{axis}[
	title = RT$_2$ elements,
legend cell align={left},
legend style={
  fill opacity=0.8,
  draw opacity=1,
  text opacity=1,
  at={(0.03,0.97)},
  anchor=north west,
  draw=white!80!black
},
log basis x={10},
log basis y={10},
tick align=outside,
tick pos=left,
x grid style={white!69.0196078431373!black},
xlabel={$h$},
xmajorgrids,
xmin=0.0126009482217647, xmax=0.0326312118175446,
xmode=log,
xtick style={color=black},
y grid style={white!69.0196078431373!black},
ymajorgrids,
ymin=3.81095178130659e-11, ymax=13e-05,
ymode=log,
ytick style={color=black}
]
\addplot [thick, tumblau]
table {%
	0.03125 1.05799917728602e-05
	0.0185185185185185 2.15440544270739e-06
	0.0131578947368421 7.88042291243316e-07
};
\addlegendentry{$\|\psi_{128} - \psi_h\|_{L^\infty(L^2)}$}
\addplot [thick, RUred]
table {%
	0.03125 8.15699538539413e-10
	0.0185185185185185 1.81747414414312e-10
	0.0131578947368421 6.92222079899262e-11
};
\addlegendentry{$\|\bv_{128} - \bvh\|_{L^\infty(L^2)}$}
\addplot [semithick, black, forget plot]
table {%
	0.0220739924594722 1.83445555046674e-06
	0.0175339954636496 1.83445555046674e-06
};
\addplot [semithick, black, forget plot]
table {%
	0.0220739924594722 3.67549146676572e-06
	0.0220739924594722 1.83445555046674e-06
};
\addplot [semithick, black, forget plot]
table {%
	0.0220739924594722 1.55759388889163e-10
	0.0175339954636496 1.55759388889163e-10
};
\addplot [semithick, black, forget plot]
table {%
	0.0220739924594722 3.00838649681604e-10
	0.0220739924594722 1.55759388889163e-10
};
\draw (axis cs:0.0221843624217695,2.07730990995974e-06) node[
scale=0.7,
anchor=base west,
text=black,
rotate=0.0
]{3.018};
\draw (axis cs:0.0221843624217695,1.73174490922695e-10) node[
scale=0.7,
anchor=base west,
text=black,
rotate=0.0
]{2.859};
\end{axis}

\end{tikzpicture}
		}
	\end{subfigure}%
	\begin{subfigure}{.5\textwidth}
		\resizebox{!}{6cm}
		{
			% This file was created with tikzplotlib v0.9.15.
\begin{tikzpicture}

\begin{axis}[
	title = BDM$_1$ elements,
legend cell align={left},
legend style={
  fill opacity=0.8,
  draw opacity=1,
  text opacity=1,
  at={(0.03,0.97)},
  anchor=north west,
  draw=white!80!black
},
log basis x={10},
log basis y={10},
tick align=outside,
tick pos=left,
x grid style={white!69.0196078431373!black},
xlabel={$h$},
xmajorgrids,
xmin=0.0126009482217647, xmax=0.0326312118175446,
xmode=log,
xtick style={color=black},
y grid style={white!69.0196078431373!black},
ymajorgrids,
ymin=4.68812708434548e-11, ymax=4e-3,
ymode=log,
ytick style={color=black}
]
\addplot [thick, tumblau]
table {%
	0.03125 7.56868725886078e-05
	0.0185185185185185 2.96552836549757e-05
	0.0131578947368421 1.52279214303462e-05
};
\addlegendentry{$\|\psi_{128} - \psi_h\|_{L^\infty(L^2)}$}
\addplot [thick, RUred]
table {%
	0.03125 1.19044096241639e-09
	0.0185185185185185 2.42511755824251e-10
	0.0131578947368421 9.26018007867195e-11
};
\addlegendentry{$\|\bv_{128} - \bvh\|_{L^\infty(L^2)}$}
\addplot [semithick, black, forget plot]
table {%
	0.0220739924594722 2.66581993034641e-05
	0.0175339954636496 2.66581993034641e-05
};
\addplot [semithick, black, forget plot]
table {%
	0.0220739924594722 4.0614841334614e-05
	0.0220739924594722 2.66581993034641e-05
};
\addplot [semithick, black, forget plot]
table {%
	0.0220739924594722 2.07919937048201e-10
	0.0175339954636496 2.07919937048201e-10
};
\addplot [semithick, black, forget plot]
table {%
	0.0220739924594722 4.1367512939778e-10
	0.0220739924594722 2.07919937048201e-10
};
\draw (axis cs:0.0221843624217695,2.63237509178516e-05) node[
scale=0.7,
anchor=base west,
text=black,
rotate=0.0
]{1.829};
\draw (axis cs:0.0221843624217695,2.34621474703803e-10) node[
scale=0.7,
anchor=base west,
text=black,
rotate=0.0
]{2.988};
\end{axis}

\end{tikzpicture}
		}
	\end{subfigure}%
	\caption{Convergence rates for RT$_{2}$ and BDM$_2$-element approximations of a problem with unknown exact solution}
	\label{fig:errnum2}
\end{figure}

Similarly to the problem with known solution, the numerical experiments for RT$_k$ element match the theoretical findings. Numerical experiments with BDM$_k$ elements again hint at a higher order of convergence for the vector variable $\bv$ than proven.

\section{Conclusion} 
In this work, we have performed the well-posedness and \emph{a priori} error analysis of the semi-discrete Kuznetsov equation in mixed form, which allows us to characterize the full acoustic field accurately at once. In particular, we have studied both potential-velocity and pressure-velocity forms and established convergence estimates for a broad family of mixed finite elements, including the popular RT and BDM elements. {Additionally, we have provided a uniform-in-$b$ error analysis of the mixed approximation of the Westervelt equation $(\sigma=0)$ and established sufficient conditions under which  the hidden constant in the derived bounds does not degenerate as $b \rightarrow 0^+$.} We have demonstrated our findings through numerical experiments which reinforced our results and even out-performed them in the case of BDM approximations of the acoustic particle velocity. 

\section*{Acknowledgments}
The authors would like to thank Prof.\ A.K.\ Pani for his helpful comments regarding the maximum-norm error estimates.
\bibliography{references}
\bibliographystyle{siam} 
\end{document}